\documentclass[10pt, reqn, oneside]{amsart}
\usepackage{amsmath,amscd,amsthm,amsxtra,amssymb}
\usepackage{epsfig, graphics, color,colortbl}
\usepackage{amssymb,latexsym}
\usepackage{mathrsfs}
\usepackage{breqn}
\usepackage{booktabs}
\usepackage{mathtools}
\usepackage{tikz, tikz-3dplot, pgfplots}
\usepackage{tkz-graph}
\usetikzlibrary[positioning,patterns]
\usepackage{multicol}
\usepackage[poly,all]{xy}
\usepackage{marginnote}
\usepackage{xspace}
\usepackage{yfonts}
\usepackage{enumerate}
\allowdisplaybreaks[3]
\numberwithin{equation}{section}
\usepackage{microtype}  
\allowdisplaybreaks[4]  

\usepackage[margin=1in]{geometry}  

\usepackage{url}  
\usepackage{xcolor}  
\usepackage[colorlinks=true,  
            linkcolor=blue,    
            citecolor=green,   
            urlcolor=magenta]  
           {hyperref}         

\DeclareUrlCommand{\citeurl}{\urlstyle{same}}  

\theoremstyle{plain}
\newtheorem{theorem}{Theorem}[section]
\newtheorem{lemma}[theorem]{Lemma}
\newtheorem{remark}[theorem]{Remark}

\newtheorem{definition}[theorem]{Definition}
\newtheorem{proposition}[theorem]{Proposition}

\newtheorem{conjecture}{Conjecture}
\newtheorem*{theorem*}{Theorem}

\newcommand{\N}{\mathbb{N}}
\newcommand{\Z}{\mathbb{Z}}
\newcommand{\C}{\mathbb{C}}
\newcommand{\g}{\mathfrak{g}}

\newcommand{\h}{\dot{\mathfrak{h}}}
\newcommand{\ag}{\hat{\mathfrak{g}}} 

\newcommand{\ah}{\mathfrak{h}}

\newcommand{\qag}{\mathcal{U}_q(\widehat{\mathfrak{g}})}

\newcommand{\A}{\mathbb{A}}
\newcommand{\s}{\mathbb{S}}
\newcommand{\U}{\mathcal{U}}
\renewcommand{\P}{\mathcal{P}}

\begin{document}

		\title[Imaginary Whittaker modules]{Whittaker constructions for quantum affine algebras} 
\author{Vyacheslav Futorny, Santanu Tantubay}
\address{Shenzhen International Center for Mathematics, Southern University of Science and Technology, China}
\email{vfutorny@gmail.com}
\email{1mathsantanu@gmail.com}
	\maketitle
	\begin{abstract}
The goals of the paper are 3-fold. First, 
	  we revisit the construction of imaginary Whittaker modules over untwisted affine Kac-Moody Lie algebras. These modules are obtained using
    the  parabolic induction from irreducible Whittaker modules over the associated Heisenberg Lie algebras.   We show that the infinite support condition for Whittaker functions on Heisenberg Lie algebras is essential for irreducibility: when the support is finite the modules becomes reducible, yielding infinite chains of submodules. We establish the irreducibility criterion for the induced modules over affine Lie algebras and construct a large family of such modules. In particular, we obtain a class of irreducible modules on which the derivation acts neither semisimply nor freely. Second,  we consider quantum analogs of imaginary Whittaker modules and establish irreducibility for a family of such modules.  
      Finally, we prove the irreducibility of a certain class of modules over $\mathcal{U}_q(A_1^{(1)})$, which are not quantum deformations  of irreducible modules for the affine Kac-Moody Lie algebra $A_1^{(1)}$.
      Our results can be potentially extended to all types of untwisted quantum affine algebras, providing a pathway toward their classification.

	\end{abstract}
    
\section{Introduction} Whittaker modules play an important role in the 
 representation theory of Lie (super)algebras as the simplest family of non-weight representations, cf. \cite{REB81}  for the classification problem of irreducible modules for $\mathfrak{sl}_2$. The theory originated in Kostant's fundamental paper \cite{BK78}, where the correspondence was established between the central characters of $\mathcal{U}(\mathfrak{l})$ and irreducible Whittaker modules for $\mathfrak{l}$ with a regular Whittaker function on a nilpotent radical for an arbitrary semisimple Lie algebra $\mathfrak{l}$.   
Whittaker modules were studied for various families of algebras, cf.  \cite{MO05}, \cite{BO09}, \cite{ALZ16}, \cite{LPX19}, \cite{CJ20} \cite{CH21}, \cite{S00}, \cite{XGZ21}, \cite{ZL22}, \cite{CLLW24}, \cite{FS25} and references therein.

Every closed partition of the root system of an affine Kac-Moody Lie algebra defines a triangular decomposition of this algebra, these partitions form finitely many affine Weyl group orbits (cf. \cite{JK85}, \cite{JK90}, \cite{F90}, \cite{F92}). For any such partition of the root system and any $1$-dimensional representation of the positive part of the corresponding  triangular decomposition one defines a Whittaker module for the affine Lie algebra. The case of  the \emph{standard} partition of the root system
was considered in
 \cite{ALZ16}, \cite{CLLW24}, where the authors classified all irreducible Whittaker modules over $A_1^{(1)}$ and $A_N^{(1)}$ respectively. 
 
 Whittaker construction can be also extended to parabolic subalgebras by considering the parabolic induction from Whittaker modules over the Levi part of a given parabolic subalgebra. In particular, the \emph{natural} partition of the root system leads to parabolic subalgebras, whose Levi part contains the Heisenberg subalgebra. In the contrast to the case of parabolic subalgebras coming from the standard partition of the root system, the induced modules here are not smooth. 
  A family of irreducible representations of untwisted affine Kac-Moody algebras obtained by a parabolic induction from Whittaker modules over the Heisenberg subalgebra was defined  in \cite{C08}. 
   A more general construction was considered for all affine Kac-Moody Lie algebras in \cite{CF23}.

  In the first part of this paper we rewrite and complete the proof of the main result in \cite{C08} starting with a broader class of irreducible representations of the generalized Heisenberg Lie algebra. The resulting irreducible representations are imaginary (corresponding to the natural triangular decomposition) Whittaker modules for untwisted affine Kac-Moody Lie algebras.
  
  Let $H^\prime\subset \ag$ be the generalized Heisenberg subalgebra of $\ag$ and  $H=H^\prime\oplus \C d$
 its  extension  by the derivation.

To bridge the representation theory of the Heisenberg subalgebra with that of affine Lie algebras, we employ the technique of the parabolic induction. For any module $V$ over the subalgebra $H\oplus \h$, we denote by $\widehat{V}$ the corresponding induced $\ag$-module defined by
\[\widehat{V}=\U(\ag)\otimes _{\U(\P)}V,\]
where  $\P=H\oplus \h\oplus (\g_+\otimes \C[t^{\pm 1}])$ is the relevant parabolic subalgebra.

Given $\zeta\in \text{Hom}({H}^\prime(+),\C)$ and $\lambda\in (\h\oplus \C c)^*$ such that $\lambda(c)=a\in \C^*$, we construct three type of $H\oplus \h$-modules $M_{\zeta,\lambda}$ (when supp$(\zeta)=\infty$), $ N_{\zeta,\lambda}$ (when supp$(\zeta)<\infty$), and $K_{\zeta, \lambda, b}$ ($b\in \C$) in Section \ref{ciwm}. Similarly, we construct an $H^\prime\oplus \h$-module $M^\prime_{\zeta,\lambda}$.
  
  Our first main result (cf. Theorem \ref{classical}) shows that the functor of parabolic induction from Whittaker modules for the Heisenberg Lie algebra preserves irreducibility. Moreover, the same holds for  the parabolic induction from Whittaker modules for the Heisenberg Lie subalgebra without the derivation to the derived algebra of a non-twisted affine Kac-Moody algebra (cf. Theorem \ref{da}). Namely we have

\begin{theorem*}[A]\label{A}
Let $\zeta\in \text{Hom}({H}^\prime(+),\C)$, $\lambda\in (\h\oplus \C c)^*$ with $\lambda(c)\neq 0$ and $b\in \C$.
\begin{itemize}
\item Let $S\in \{M,N\}$.  Assume that $H\oplus \h$-module $S_{\zeta,\lambda}$ (resp. $K_{\zeta, \lambda, b}$) is irreducible. Then the  induced $\ag$-module $\widehat{S}_{\zeta,\lambda}$ (resp. $\widehat{K}_{\zeta, \lambda, b}$) is irreducible.
\item  Let $\tilde{\mathfrak{g}}=[\ag,\ag]$ be the derived algebra of $\ag$. Assume  that  $M^\prime_{\zeta,\lambda}$ is irreducible $H^\prime\oplus \h$-module, then the induced $\tilde{\mathfrak{g}}$-module $\widetilde{M}^\prime_{\zeta,\lambda}$ is irreducible.
 \end{itemize}
\end{theorem*}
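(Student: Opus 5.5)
The plan is the standard argument for parabolic induction with a nonzero central charge, in the style of Futorny's imaginary Verma module techniques. By the PBW theorem, $\widehat{V}\cong \U(\g_-\otimes\C[t^{\pm1}])\otimes V$ as vector spaces, where $V$ is one of $M_{\zeta,\lambda}$, $N_{\zeta,\lambda}$, $K_{\zeta,\lambda,b}$. The key claim is:
\begin{quote}
Every nonzero $\ag$-submodule $W\subseteq\widehat{V}$ meets $1\otimes V$ nontrivially.
\end{quote}
Granting this, $W\cap(1\otimes V)$ is a nonzero $H\oplus\h$-submodule of $V$, hence all of $V$ by the irreducibility hypothesis. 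Since $V$ generates $\widehat{V}$, this gives $W=\widehat{V}$.

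To prove the claim, take $0\neq w\in W$ and write
\[
w=\sum_i u_i\otimes v_i,
\]
with $u_i$ PBW monomials in the root vectors $e_{-\alpha}\otimes t^n$ ($\alpha\in\dot\Delta_+$) and the $v_i$ linearly independent. We grade by $-\h$-weight (the $\h$-weight is well defined on the $\U(\g_-\otimes\C[t^{\pm1}])$ factor) and also by PBW degree. Choose $w$ in $W$ of minimal "height", with the order given first by total degree and then by the $\h$-weight. The aim is to show that if $w\notin 1\otimes V$, we can produce a nonzero element of $W$ of strictly smaller height.

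For this we act with suitable elements $x=e_{\beta}\otimes t^m\in\g_+\otimes\C[t^{\pm1}]$, with $\beta$ a simple or maximal root appearing in a leading monomial. Since $x$ kills $1\otimes V$, commuting $x$ through the monomials gives
\[
x\cdot(u\otimes v)=[x,u]\otimes v+\cdots,
\]
in which each commutator $[e_\beta\otimes t^m,e_{-\beta}\otimes t^n]=h_\beta\otimes t^{m+n}+m\delta_{m,-n}c$ lowers the degree by one.

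The main obstacle is that the Cartan loop elements $h\otimes t^k$, which appear in these commutators, do not act by scalars on $V$. They act through $H$, with $c$ acting by $a\neq0$. We therefore have to show that the lowered leading term does not vanish. I would handle this by choosing the exponent $m$ so that $m+n$ lies outside a finite window, for example $|m|$ very large relative to all exponents occurring in $w$. The degree-lowered term then contains $h_\beta\otimes t^{m+n}$ acting on $V$. On the Whittaker-type module this acts either by a Heisenberg relation producing the nonzero scalar $a\cdot m$ against a paired mode, or, for $m+n>0$ in the support of $\zeta$, essentially by $\zeta$, while on the remaining modes it acts freely. Comparing leading coefficients in $m$ shows the result is nonzero.

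For $N_{\zeta,\lambda}$ and $K_{\zeta,\lambda,b}$, where $\zeta$ has finite support or $d$ acts non-semisimply, the same large-$|m|$ trick works, because only the negative Heisenberg modes matter and $c\neq0$ gives nondegeneracy. The $d$-action is never used, apart from preserving $W$.

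The derived-algebra statement for $\widetilde{M}'_{\zeta,\lambda}$ follows verbatim, since the argument used only $\g_+\otimes\C[t^{\pm1}]$, $H'$, $\h$ and $c$, never $d$.
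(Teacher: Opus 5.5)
Your skeleton is the paper's: the PBW decomposition $\widehat V\cong\U(\g_-\otimes\C[t^{\pm1}])\otimes V$, the reduction to showing that every nonzero submodule meets $1\otimes V$, and lowering the height of a weight vector with some $X_\beta(m)$. The gap is in the one step that carries the content, namely why $X_\beta(m)\cdot w\neq0$. You leave the sign of $m$ open and offer three mechanisms, and only the last one ("acts freely") works.

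\begin{itemize}
\item If $m\gg0$, then $h_\beta(m+n)$ commutes past every Heisenberg mode occurring in the $v_{\bf p}$ and acts through $\zeta$.
  \begin{itemize}
  \item For $N_{\zeta,\lambda}$ (finite support) it therefore kills the $v_{\bf p}$.
  \item For $M_{\zeta,\lambda}$ with infinite support, $\zeta$ can still vanish identically on $h_\beta\otimes t^{\Z_{>0}}$ when $\mathrm{rank}\,\g\ge2$.
  \end{itemize}
  In either case, already $w=Y_\beta(n)\otimes v$, with $v$ the Whittaker generator, gives $X_\beta(m)\cdot w=0$ for all large $m$.
\item The Heisenberg pairing that "produces $a\cdot m$" cannot occur once $|m|$ is large compared with the exponents in $w$, because the paired mode is absent from every $v_{\bf p}$.
\item "Comparing leading coefficients in $m$" is not an argument. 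Different $m$ give different vectors, not a polynomial in $m$.
\item $c\neq0$ plays no role in this step. It enters only through the assumed irreducibility of $V$.
\end{itemize}

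What works, and what the paper does, is to commit to $r\ll0$. The module $V$ is free over $\C[h_{\alpha_j}(k):k<0]$, and each $v_{\bf p}$ involves only modes $>-\nu$. Choose $r<0$ with $r+s<-\nu$ for all relevant $s$. Then each $h_\beta(r+s)$ is a fresh polynomial variable. The part of $X_\beta(r)\cdot w$ that is linear in fresh variables is exactly $\sum_{{\bf p},s}\frac{\partial Y^{\bf p}}{\partial Y_\beta(s)}\otimes h_\beta(r+s)v_{\bf p}$, since all other terms only change the $\U(\g_-\otimes\C[t^{\pm1}])$-factor. PBW independence then shows this part, and hence $X_\beta(r)\cdot w$, is nonzero.

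To isolate this part, the paper also takes $\beta=\beta_i$ with $i$ minimal among the roots occurring in $w$. The order is one in which $\beta_j-\beta_i\in\dot\triangle$ with $j>i$ forces $\beta_j-\beta_i\in\dot\triangle_+$. Commuting $X_{\beta_i}(r)$ past the other factors then produces only negative root vectors (Proposition~\ref{Main Identity}). Your choice of a "simple or maximal root" is not adequate as stated. For maximal $\beta$, $[X_\beta(r),Y_\gamma(n)]$ can be a positive root vector $X_{\beta-\gamma}(r+n)$. That vector later creates further fresh Cartan modes $h_{\beta-\gamma}(\cdot)$, which you would have to control separately.
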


     


Further we proceed with the construction of Whittaker modules  ${M}^q_{\zeta,\lambda}$  and  $K^q_{\zeta, \lambda, t}$ for quantum Heisenberg Lie algebras with both free and semisimple  action of $D$ respectively, and establish their irreducibility. Here $(\zeta, a, t, \lambda)\in (\text{Hom}_{\text{alg}}(\mathcal{H}_q^\prime(+), \C(q^{\frac{1}{2}}
 )), \C^\times, \C,  \dot{P})$. Using the imaginary Poincare-Birkhoff-Witt basis from \cite{CFM15} we construct a class of modules for quantum affine algebras (cf. Equation \ref{qam}) induced from these Whittaker modules.

We propose a conjecture on the irreducibility of   $\widehat{M}^q_{\zeta,\lambda}$  and $\widehat{K}_{\zeta, \lambda, t}^q$ (cf. Conjecture \ref{conjecture}).
 We prove it for induced modules with a semisimple action of the derivation.
 To show this, we use the ideas from \cite{FHW15} (cf. Theorem \ref{qi}) and construct  the $\mathbb{A}$-form of the module (cf. Equation \ref{afm}) using the $\mathbb{A}$-form of $\mathcal{U}_q(\ag)$ from Theorem \ref{algaf}. Then taking the classical limits one gets the one-to-one correspondence between $\mathbb{A}$-forms of submodules of $\qag$ and   classical submodules. Applying Theorem (A), we finally obtain the irreducibility of the induced module  with a semisimple action of the derivation.  By restricting  to the quantized derived affine  subalgebra $\U_q([\ag,\ag])$ and applying Theorem (A) to the induced module $\widetilde{M}_{(\zeta,\lambda)}^{\prime,q}$ we obtain a similar irreducibility statement. We summarize the results in the following

\begin{theorem*}[B]
Let $(\zeta, a, t, \lambda)\in (\text{Hom}_{\text{alg}}(\mathcal{H}_q^\prime(+), \C(q^{\frac{1}{2}}
 )), \C^\times, \C,  \dot{P})$.
\begin{itemize}
\item If $K^q_{\zeta, \lambda, t}$ is an irreducible $\mathcal{H}_q$-module (cf.Remark \ref{rmk1} (2)), then $\widehat{K}_{\zeta, \lambda, t}^q$ is an irreducible $\U_q(\ag)$-module.
\item Is $M_{(\zeta,\lambda)}^{\prime,q}$ is an irreducible $\mathcal{H}_q^{\prime,d}$-module  (cf. Section \ref{qderaff}), then the  induced $\U_q([\ag.\ag])$-module $\widetilde{M}_{(\zeta,\lambda)}^{\prime,q}$ is  irreducible.
\end{itemize}
 \end{theorem*}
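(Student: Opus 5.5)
The plan is the specialization argument outlined in the introduction, following \cite{FHW15}: reduce irreducibility over $\U_q(\ag)$ to Theorem (A) by passing to the classical limit.

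\emph{Setup.} Fix the $\mathbb{A}$-form $\U_{\mathbb{A}}(\ag)$ of $\qag$ from Theorem \ref{algaf}, with $\mathbb{A}$ the localization of $\C[q^{\frac12}]$ at $q^{\frac12}=1$. Take the $\mathbb{A}$-form $K^{q}_{\mathbb{A}}$ of the Whittaker module $K^q_{\zeta,\lambda,t}$. The values of $\zeta$ and the scalars $a,t,\lambda$ should be chosen, or normalized, to lie in $\mathbb{A}$, so that the imaginary root vectors act $\mathbb{A}$-integrally. Using the imaginary PBW basis of \cite{CFM15}, define the $\mathbb{A}$-form $\widehat{K}^q_{\mathbb{A}}$ of $\widehat{K}^q_{\zeta,\lambda,t}$ as in Equation \ref{afm}. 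This is the $\U_{\mathbb{A}}(\ag)$-span of the generating vector.

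\emph{Free module and specialization.} The first step is to show that $\widehat{K}^q_{\mathbb{A}}$ is a free $\mathbb{A}$-module with basis given by ordered PBW monomials in the negative real root vectors applied to a basis of $K^q_{\mathbb{A}}$. The second step is to identify
\[
\widehat{K}^q_{\mathbb{A}}\otimes_{\mathbb{A}}\C
\]
(with $q^{\frac12}\mapsto 1$) with the classical induced module $\widehat{K}_{\zeta_1,\lambda,b}$. Here $\zeta_1$ is the classical limit of $\zeta$ and $b$ the limit of $t$. This uses that the specialized $\mathbb{A}$-form of $\mathcal{H}_q$ recovers $\U(H)$, and that $D$ specializes to the derivation. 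Since the $D$-action is semisimple, the weight spaces are preserved under specialization. By Theorem (A), the classical limit is irreducible, assuming the limit of $K^q$ is irreducible, which should follow from the irreducibility hypothesis in Remark \ref{rmk1}(2).

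\emph{Lifting irreducibility.} Let $W\subset \widehat{K}^q_{\zeta,\lambda,t}$ be a nonzero submodule. Set $W_{\mathbb{A}}=W\cap \widehat{K}^q_{\mathbb{A}}$. This is a $\U_{\mathbb{A}}$-submodule and a pure $\mathbb{A}$-submodule: if $(q^{\frac12}-1)v\in W_{\mathbb{A}}$ then $v\in W_{\mathbb{A}}$. Because $\mathbb{A}$ is a DVR, its image $\overline{W}$ in the classical limit is nonzero; rescale any nonzero vector to be primitive. Moreover $\overline{W}$ is a $\ag$-submodule, so by irreducibility $\overline{W}$ is the whole classical module. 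In particular the image of the generator lies in $\overline{W}$.

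Now work weight space by weight space under $D$ and the Cartan part. The generalized weight spaces of the induced module are not finite-dimensional, because the Heisenberg part is infinite. So I would instead use the grading by the $\dot Q$-weights of the real root vectors. On the component of top degree, meaning the image of $K^q$ itself, Nakayama-type purity gives $W_{\mathbb{A}}\cap K^q_{\mathbb{A}}$ surjecting onto the classical $K$. This should force $W\cap K^q\neq 0$, and irreducibility of $K^q$ over $\mathcal{H}_q$ then gives $W$ everything.

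\emph{Main obstacle.} The hard step is this last one: Nakayama needs finite generation, and the relevant spaces are infinite rank. I would first try to project $W$ onto the top component via the real root raising operators. The aim is to show that some nonzero element of $W$ can be pushed into $1\otimes K^q$, by mimicking the classical argument of Theorem (A) with $q$-commutators. Failing that, I would restrict to finitely generated $\mathbb{A}$-submodules generated by a single vector, where purity suffices.

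The derived-algebra statement follows identically, using $\widetilde{M}'$, the second part of Theorem (A), and the $\mathbb{A}$-form of $\U_q([\ag,\ag])$.
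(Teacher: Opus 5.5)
Your route is the paper's: build the $\mathbb{A}$-form of $\widehat{K}^q_{\zeta,\lambda,t}$ (resp.\ of $\widetilde{M}^{\prime,q}_{(\zeta,\lambda)}$), identify its specialization at $q^{\frac12}=1$ with $\widehat{\bar V}$, and contradict Theorem (A). The paper states the decisive step in one sentence: a proper $W$ has a proper, nonzero classical limit. You are right that this needs an argument, and your primitive-vector argument does give $\overline{W}\neq 0$. But you leave the final step open, and the remedies you suggest miss the point. Nakayama and finite generation are not needed. Redoing the height reduction of Theorem (A) with $q$-commutators is the kind of direct computation the paper manages only for $A_1^{(1)}$ (Section 8), and the classical-limit detour exists precisely to avoid it.

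\textbf{The missing idea: $W$ is automatically graded by $\dot Q$-weight.} The $K_i$, $i\in\dot I$, act diagonally on $\widehat{K}^q_{\zeta,\lambda,t}$, by $q^{\lambda(h_i)-(\alpha_i|\alpha)}$ on the span of $x^-$-monomials of weight $-\alpha$, $\alpha\in\dot Q^+$. Different $\alpha$ give different eigenvalue tuples, since $(\cdot|\cdot)$ is nondegenerate on $\dot Q$ and $q$ is an indeterminate. Hence every submodule is a weight submodule; the paper invokes \cite[Proposition 3.2.1]{HK} for exactly this in Section 8.

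Since $\widehat{K}^q_{\mathbb{A}}$ is spanned by PBW monomials that are weight vectors, the weight components of any element of $W_{\mathbb{A}}$ again lie in $W_{\mathbb{A}}$. So $W_{\mathbb{A}}=\bigoplus_{\alpha}(W_{\mathbb{A}})_{\lambda-\alpha}$, and specialization sends $(W_{\mathbb{A}})_{\lambda-\alpha}$ into the classical $\h$-weight space of weight $\lambda-\alpha$. Therefore the $\lambda$-component of $\overline{W}$ is exactly the image of $(W_{\mathbb{A}})_{\lambda}=W\cap(1\otimes K^q_{\mathbb{A}})$.

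Once you know $\overline{W}$ is the whole classical module, this image is $1\otimes\bar V\neq 0$, so $W\cap(1\otimes K^q)\neq 0$. A nonzero image forces a nonzero preimage; no rank or finiteness issue arises, and you never need finite-dimensional $D$-weight spaces. Now $1\otimes K^q$ is the full $\lambda$-weight space and is stable under $\mathcal{H}_q$ (resp.\ $\mathcal{H}_q^{\prime,d}$). So irreducibility of $K^q$ (resp.\ $M^{\prime,q}$) gives $v\in W$, and $W$ is everything. Read contrapositively, this is the paper's one-line claim: a proper $W$ has $W_\lambda=0$, so $\overline{W}$ misses $\bar v$ and is proper.

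A smaller point: irreducibility of $\bar V$ does not follow from the irreducibility hypothesis on $K^q$, because irreducibility need not survive specialization. What is needed, and what the paper uses, is the identification $\bar V\cong K_{\bar\zeta,\lambda,b}$. This is where your integrality normalization of $\zeta$, $a$, $t$ enters. You then combine it with the classical irreducibility of Heisenberg Whittaker modules with nonzero central charge from Section 4.
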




Note that the same approach can not be used to prove the irreducibility of the induced module from $M^q_{\zeta,\lambda}$, as the derivation acts freely in this case.  To tackle this problem we apply a different technique in Section 8 for $\mathcal{U}_q(A_1^{(1)})$ (cf. Proof \ref{Proof}) and prove the irreducibility of $A_1^{(1)}$-modules on which $D$ acts freely.

\begin{theorem*}[C]
    Assume that $\ag$ is the affine Kac-Moody algebra of type $A_1^{(1)}$. If $(\zeta, a, \lambda)\in (\text{Hom}_{\text{alg}}(\mathcal{H}_q^\prime(+), \C(q^{1/2})), \C^\times,\dot{P})$ and $M_{\zeta,\lambda}^q$ is an irreducible $\mathcal{H}_q$-module, then the induced $\qag$-module $\widehat{M}_{\zeta,\lambda}^q$  is irreducible.
\end{theorem*}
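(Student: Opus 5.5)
We will prove Theorem (C) directly inside $\widehat{M}^q_{\zeta,\lambda}$, without a classical limit. Specialization fails here because $D$ (i.e.\ $q^d$) acts freely on $M^q_{\zeta,\lambda}$, so there is no weight-space grading to control an $\mathbb{A}$-form.

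\textbf{Structure of the induced module.} Use the imaginary PBW basis of \cite{CFM15} for $\U_q(A_1^{(1)})$ in Drinfeld generators $x_k^{\pm}$, $h_l$, $K^{\pm1}$, $C^{\pm1/2}$, $D$. Then
\[
\widehat{M}^q_{\zeta,\lambda}\cong \U_q(\mathfrak n_-)\otimes M^q_{\zeta,\lambda}
\]
as vector spaces, where $\U_q(\mathfrak n_-)$ is spanned by ordered monomials $x^-_{k_1}\cdots x^-_{k_r}$ with $k_1\le\dots\le k_r$.

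Filter by $r$, the number of $x^-$ factors. The operator $K$ acts by the scalar $q^{\lambda(h)-2r}$ on the degree-$r$ part modulo lower terms. Since $K$ commutes with the Heisenberg generators and $D$, the module splits into $K$-eigenspaces
\[
\widehat{M}_r=\U_q(\mathfrak n_-)_{r}\otimes M^q_{\zeta,\lambda},
\]
which are genuine eigenspaces.

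\textbf{Reduction to the top component.} Let $W$ be a nonzero submodule. It is $K$-stable, so it contains a nonzero vector $w$ in some $\widehat{M}_r$. The key claim is that some product of $x^+_k$'s sends $w$ to a nonzero vector of $\widehat{M}_0=M^q_{\zeta,\lambda}$. Once this holds, irreducibility of $M^q_{\zeta,\lambda}$ as an $\mathcal H_q$-module gives $M^q_{\zeta,\lambda}\subset W$, and this subspace generates everything.

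To prove the claim, argue by induction on $r$. Write
\[
w=\sum_{\mathbf k} x^-_{\mathbf k}\otimes v_{\mathbf k}.
\]
Use the relation
\[
[x^+_k,x^-_l]=\frac{C^{(k-l)/2}\psi_{k+l}-C^{-(k-l)/2}\varphi_{k+l}}{q-q^{-1}},
\]
together with the action of $\psi_m,\varphi_m$, which are polynomials in the $h_l$. Here $x^+_k$ kills $M^q_{\zeta,\lambda}$, and $h_l$ acts on $x^-_m$ by $[h_l,x^-_m]=-\tfrac{[2l]}{l}C^{|l|/2}x^-_{l+m}$. Apply $x^+_k$ with $k$ very large, beyond the range of all $k_i$ appearing in $w$. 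The result is
\[
x^+_k w=\sum_i \bigl(\text{terms }x^-_{\mathbf k\setminus k_i}\otimes \psi_{k+k_i}v_{\mathbf k}\bigr)+\dots .
\]

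The $\psi_{k+k_i}$ act on the Whittaker vector $v_{\mathbf k}$ through the character $\zeta$ on $\mathcal H'_q(+)$, which has infinite support, plus lowering terms. Infinite support lets us pick $k$ so that the leading coefficient involves a nonzero $\zeta$-value. This is the quantum counterpart of the argument for Theorem \ref{classical}.

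\textbf{Main obstacle.} The hard step is showing that $x^+_k w$ is nonzero and lies in $\widehat{M}_{r-1}$. Two sources of error have to be ruled out:
\begin{itemize}
\item $\psi_{k+k_i}$ is a nonlinear (exponential) expression in the $h_l$, not a single Heisenberg generator;
\item the $q$-commutation of $h_l$ past the $x^-$'s produces shifted indices that could cancel the leading term.
\end{itemize}

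To control this, put a total order on pairs (length, maximal index) and on the $D$-degree in the free $\C(q^{1/2})[D]$-structure. Then show that the top-order term of $\psi_{k+k_i}v$ is $(q-q^{-1})K h_{k+k_i}v$, with $h_{k+k_i}v$ given by its $\zeta$-value times $v$ modulo strictly lower terms. The freeness of $D$ must be handled by tracking the polynomial degree in $D$: since $[D,x^-_k]$ shifts by $k$, the highest $D$-degree coefficient cannot cancel. Choosing $k$ in the infinite support of $\zeta$ and larger than every index in $w$ should make the leading coefficient nonzero and complete the induction.
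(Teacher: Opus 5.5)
\textbf{There is a genuine gap: $x^+_kw\neq 0$ fails for $k\gg 0$.} With $k\gg0$ only $\psi_{k+k_i}$ appears. $\psi_n$ is a polynomial in the annihilation modes $h_l$, $l>0$. On the Whittaker vector $v$ it acts by the scalar $q^{\lambda(h_1)}c_n$, where
\[\sum_{n\ge0}c_nz^n=\exp\Bigl((q-q^{-1})\sum_{l>0}\zeta(h_l)z^l\Bigr).\]
Every monomial of $\psi_n$ sends $v$ to a scalar multiple of $v$. So no ordering makes $(q-q^{-1})Kh_nv$ the ``top term'' with everything else lower. The coefficient that matters is $c_n$, not $\zeta(h_n)$, and $c_n$ can vanish for all large $n$ even when $\zeta$ has infinite support.

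For example, take $\zeta(h_l)=\frac{(-1)^{l-1}}{l(q-q^{-1})}$, which is nonzero for every $l$. Then the exponential equals $\exp(\log(1+z))=1+z$, so $\psi_nv=0$ for $n\ge2$. Since $x^+_k$ kills $v$ and $\phi_{k+m}=0$ when $k+m>0$,
\[x^+_k\,(x^-_m\otimes v)=\frac{\gamma^{(k-m)/2}}{q-q^{-1}}\,\psi_{k+m}v=0\quad\text{for all }k\ge 2-m.\]
So your procedure already fails for one $x^-$ factor. In addition, infinite support is not a hypothesis of Theorem (C). By Theorem \ref{qh}, $M^q_{\zeta,a}$ is irreducible for every nonzero $\zeta$, so assuming $M^q_{\zeta,\lambda}$ irreducible imposes no support condition. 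A second problem you leave open: moving $\psi_{k+k_i}$ past the remaining $x^-$'s (Lemma \ref{psi}) produces $x^-$'s of huge index times $\psi$'s of small index, and these act nontrivially on the $v_{\mathbf k}$.

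\textbf{The fix is to take $k$ very negative, as the paper does.} Choose $k\in\Z_{<0}$ with $k+m_j<-N$, where $N$ bounds the negative modes occurring in the $v_{\mathbf m}$. Then $\psi_{k+m_j}=0$, and only $\phi_{k+m_j}$ survives (Proposition \ref{qprop}).

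\begin{itemize}
\item $\phi_{k+m_j}$ is a polynomial in the creation modes $h_{-l}$. Its linear term is $-(q-q^{-1})K^{-1}h_{k+m_j}$, and this mode occurs in no $v_{\mathbf m}$.
\item It acts injectively on $M^q_{\zeta,a}\cong\C(q^{1/2})[D^{\pm1}]\otimes\C(q^{1/2})[h_{-l}]\,v$. Commuting past $D^{\pm1}$ only rescales each $D$-component, so nonvanishing does not depend on $\zeta$ at all.
\item The commutator terms from Lemma \ref{phic} contain only $\phi$'s of strictly smaller length. They therefore cannot contain the new mode $h_{k+m_{\min}}$, with $m_{\min}$ the smallest index occurring. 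Extracting the part of $x^+_kv$ linear in $h_{k+m_{\min}}$ isolates a nonzero leading term.
\end{itemize}

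With $k\ll0$ and this freeness argument in place of the $\zeta$-value argument, your $K$-eigenspace reduction to $M^q_{\zeta,\lambda}$ goes through.
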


 The structure of the paper is as follows.
 In Section 2 we  recall the notion of Whittaker modules for any Lie algebra with a triangular decomposition.
 We also describe the Imaginary Poincare-Birkhoff-Witt basis of  quantum affine algebra  $\U_q(\ag)$ following \cite{CFM15}, and corresponding $\mathbb{A}$-form. In Section 3, we work with $\U([\ag,\ag])$ and establish a key identity (cf. Proposition \ref{Main Identity}), which  plays an important role in constructing new irreducible representations. In Section 4 we recall the results of \cite{C08} on Whittaker modules for the Heisenberg Lie algebra.  We correct a discrepancy in the above mentioned work (equation 6.12)  regarding the treatment of general elements in the proof of irreducibility for induced modules from Heisenberg to affine Kac-Moody algebras.  
  We prove in Lemma \ref{fs} that the infinite support condition for the Whittaker function is essential for the irreducibility of $M_{\zeta, a}$. As a result we are able to construct a new class of irreducible imaginary Whittaker modules for non-twisted affine Kac-Moody Lie algebras on which the derivation acts neither  semisimply nor freely.  In the case of a finite support of the Whittaker function  we get an infinite chain of submodules of the induced module. 
In Section 6, we consider Whittaker modules for the quantum Heisenberg algebra with and without the derivation $D$. Unlike in the classical case, if the action of $D$ is free then the resulting module for the quantum Heisenberg algebra  is always irreducible. This is surprisingly independent of the support of the Whittaker function (cf. Theorem \ref{qh}). In Section 7, we   construct the induced $\qag$-module $\mathbb{I}_{q}^\lambda(V_q)$ and address its irreducibility.  We also establish irreducible imaginary Whittaker modules for the quantum algebra of the derived subalgebra of a non-twisted affine Kac-Moody algebra  (cf. Theorem \ref{qafder}).
 Finally, in Section 8 we focus on the special case of $\U_q(A_1^{(1)})$. We develop a  technique to prove the irreducibility of modules with a free action of the derivation, and hence complete the proof of Theorem (C) for the type $A_1^{(1)}.$

	\section{Preliminaries}
     We denote by $\Z,\; \Z^\times,\;   \N,\; \Z_{>0},\; \C,\; \C^\times$  the sets of integers, non-zero integers, non-negative integers, positive integers, complex numbers, and non-zero complex numbers respectively. For any set $A$, we let $\N^A_{\bf{f}}:=\{f:A\rightarrow \N| \text{supp}(f)<\infty\}$.
     
     \medskip
     Given any Lie (resp. associative) algebra  $\mathfrak{a}$, we denote by $\mathfrak{a}-$ Mod the category of all $\mathfrak{a}$-modules (resp. left modules). For a Lie algebra $\mathfrak{L}$,  $\mathcal{U}(\mathfrak{L})$ denotes the universal enveloping algebra of $\mathfrak{L}$.
     \medskip
     
     Let $n$ be a fixed positive integer, $\dot{I}=\{1,2,\dots n\}$, $I=\{0,1,2,\dots n\}$. Let $q$ be an indeterminate, $\C(q^{\frac{1}{2}})$  the field of  rational functions in $q^{\frac{1}{2}}$ with complex coefficients and $\C(q^{\frac{1}{2}})^{\times}$  the set of non-zero elements in $\C(q^{\frac{1}{2}})$. For $m,r\in \Z_+$ with $r\leq m$, define  the $q$-numbers, $q$-factorials and $q$-binomials by 
 \[ [m]_q = \frac{q^m-q^{-m}}{q-q^{-1}}, \quad [m]_q^! = [m]_q[m-1]_q\cdots [1]_q, \quad	{m \brack r}_{q} = \frac{[m]^!_q}{[r]^!_q[m-r]^!_q} \]
 respectively. 
 \\ 
\subsection{Whittaker modules over a Lie algebra:}
 Let $\mathfrak{L}$ be a Lie algebra with a triangular decomposition $\mathfrak{L}=\mathfrak{L}_-\oplus \mathfrak{L}_0\oplus \mathfrak{L}_+$. Given any Lie algebra homomorphism ${\phi}: \mathfrak{L_+}\rightarrow \C$ and $M\in  \mathfrak{L}-$Mod,  a vector $v\in M$ is called a \emph{Whittaker vector of type $\phi$} if $x.v=\phi (x)v$ for all $x\in \mathfrak{L}_+$.  We denote by $Wh_\phi(M)$ the set of all Whittaker vectors of type $\phi$ in $M$. The module $M$ is called a Whittaker module of type $\phi$ if $M=\mathcal{U}(\mathfrak{L)}(Wh_\phi(M))$.
 
\subsection{Finite dimensional simple Lie algebras:} Let $\g$ be a finite dimensional complex simple Lie algebra of rank $n$ with a fixed Cartan subalgebra $\h$, $\dot{\triangle}$  the root system of $(\g,\h)$, $\dot{\Pi}=\{\alpha_i:i\in \dot{I}\}$ the set of simple roots. Denote by $\dot{\triangle}^{\pm}$  the set of positive and negative roots of $\g$ with respect to  $\dot{\Pi}$. Also let $\dot{\Pi}^\vee =\{\alpha_i^\vee:i\in \dot{I}\}$ be the sets of co-roots and $\dot{A}=(a_{ij})_{1\leq i,j\leq n}$ be the Cartan matrix for $\g$. Hence $\alpha_j(\alpha_i^\vee)=a_{ij}$ for $i,j\in \dot{I}$. Let $\dot{Q}$ be the free abelian group generated by $\dot{\Pi}$, $\dot{Q}^+$  the free monoid generated by $\dot{\Pi}^\vee$, $\{\varpi_i:i\in I \}$  the set of fundamental weights,  $\dot{P}$ (resp. $\dot{P}^+$) the free abelian group (resp. free monoid) generated by them. We denote by $(,|,)$ the symmetric invariant non-degenerate bilinear form on $\g$ (and  on $\g^*$) normalized by the condition $(\alpha|\alpha)=2$ for all short roots. For $i\in \dot{I}$ set  $d_i=\frac{(\alpha_i|\alpha_i)}{2}$.

 \subsection {Non-twisted affine Lie algebras:}
 Denote by $\ag$  the non-twisted affine Kac-Moody associated to $\g$, it has a loop realization 
 \[\ag=\g\otimes \C[t^{\pm1}]\oplus \C c\oplus \C d,\]

 where $c$ is central in $\ag$ and  $d$ is the degree derivation, with the the following brackets:
 \[[d,x\otimes t^{m_1}]=m_1x\otimes t^{m_1},\; [x\otimes t^{m_1},x\otimes t^{m_2}]=[x,y]\otimes t^{m_1+m_2}+\delta_{m_1+m_2,0}m_1(x|y)c,\]

 for $x,y\in \g,\; m,n \in \Z$. 
 
 \medskip
 
Let  $\ah=\h\oplus \C c\oplus \C d$ be the Cartan subalgebra of $\ag$ and $A=(a_{ij})_{0\leq i,j\leq n}$  the Cartan matrix of $\ag$ such that $\dot{A}$ is  obtained by removing the first row and the first column of $A$. Fix an integer $d_0$   such that $DA$ is symmetric with the diagonal matrix $D=\text{diag}\; (d_0,d_1\dots , d_n)$. 
 Let $Q=\dot{Q}\oplus \Z \delta$, and extend the form $(.|.)$ to $Q$ by setting $(\alpha|\delta)=0$ for all $\alpha\in \dot{Q}$ and $(\delta| \delta)=0$. The root system of $\ag$ is $\triangle=\triangle^{re}\cup \triangle^{im}$, where 
 $\triangle^{re}=\{\alpha+n\delta:\alpha \in \dot{\triangle},\; n\in \Z\}$ is the set of real roots and  $\triangle^{im}=\{k\delta:k\in \Z ,\; k\neq 0\}$ is the set of imaginary roots.   Denote by $\theta=\sum_{i=1}^na_i\alpha_i$  the highest positive  root of $\g$ and set $\alpha_0=\delta-\theta$. Then $\Pi=\{\alpha_i, \, i\in I\}$ is a set of simple roots of $\ag$. Let  $Q^+$ be the free monoid generated by $\Pi$,  $W$  the Weyl group generated by simple reflections $\{r_i:i\in I\}$ and $\mathfrak{B}$  the associated braid group with generators $\{T_i:i\in I\}$. Define the weight lattice as $P=\{\lambda\in \ah^*:\; \lambda(\alpha_i^\vee)\in \Z,\; \lambda(d)\in \Z\}$. \\

\subsection{Partitions of $\triangle$:}
A subset $S\subset \triangle$ is called a partition of $\triangle$ if $S\; \cup\; -S=\triangle$ and $S\; \cap -S=\emptyset$. It is called a closed partition if $S$ is closed (if $\alpha,\beta \in S$ and $\alpha+\beta \in \triangle$ then $\alpha+\beta \in S$). Given any closed partition $S$ of $\triangle$, we have a triangular decomposition of the affine Lie algebra:
\[\ag=\bigoplus_{\alpha\in-S}\ag_{\alpha}\oplus \ah \oplus \bigoplus_{\alpha\in S}\ag_{\alpha}.\]
 There are two extreme non-equivalent closed partitions of $\triangle$ known as standard and natural, whose explicit description is given by
 \[S_{\text{st}}=\{\alpha+n\delta:\alpha \in \dot{\triangle}, n>0\}\; \cup \; \dot{\triangle}_+\; \cup \;\{n\delta:n> 0\} \]
 and 
 \[S_{\text{nat}}=\{\alpha+n\delta: \alpha\in \dot{\triangle}_+,\; n\in \Z\}\;\cup\; \{n\delta:n> 0\}. \]
 For more about the properties of partitions of root systems, one can see \cite{DFD09}. 
 \begin{remark}
 \begin{enumerate}
     \item Classification of simple Whittaker modules over $\ag$ with respect to the triangular decomposition corresponding to $S_{\text{st}}$ is given in \cite{ALZ16} for $A_1^{(1)}$, in \cite{CLLW24} for $A_n^{(1)}$ and in \cite{LL24} for $B_n^{(1)}, \; C_n^{(1)}, D_n^{(1)}$. 
     \item In current paper we will construct  a large class of simple Whittaker modules over $\ag$ with respect to the triangular decomposition corresponding to $S_{\text{nat}}$. Note that $(\ag, \ag_+^{\text{st}})$ is not a Whittaker pair in the sense of \cite{BM11}.     
 \end{enumerate}
\end{remark}
 
\subsection{Drinfeld-Jimbo realizations of $\qag$:}
  The quantum affine algebra $\U_q(\ag)$ is the unital $\C(q^{1/2})$-algebra generated by 
 \[E_i,\; F_i,\; K_\alpha,\;\gamma^{\pm 1/2}, D^{\pm 1}, \; i\in I,\; \alpha \in Q \]
 subject to the relations:
 \begin{align*}     &DD^{-1}=D^{-1}D=K_{\alpha}K_{\alpha}^{-1}=K_{\alpha}^{-1}K_{\alpha}=\gamma^{1/2}\gamma^{-1/2}=\gamma^{-1/2}\gamma^{1/2}=1,\\&[\gamma^{\pm 1/2},\U_q(\ag)]=[D,K_i^{\pm 1}]=[K_i,K_j]=0,\\
 &(\gamma^{\pm 1/2})^2=K_{\delta}^{\pm 1},\\
 &E_iF_j-F_jE_i=\delta_{ij}\frac{K_i-K_i^{-1}}{q_i-q_i^{-1}},\\
 &K_{\alpha}E_iK_{\alpha}^{-1}=q^{(\alpha,\alpha_i)}E_i,\; K_{\alpha}F_iK_{\alpha}^{-1}=q^{-(\alpha,\alpha_i)}F_i,\\
 &DE_iD^{-1}=q^{\delta_{0,i}}E_i,\; DF_iD^{-1}=q^{-\delta_{0,i}}F_i,\\
 &\sum_{s=0}^{1-a_{ij}}(-1)^sE_i^{(1-a_{ij}-s)}E_jE_i^{(s)}=0=\sum_{s=0}^{1-a_{ij}}(-1)^sF_i^{(1-a_{ij}-s)}F_jF_i^{(s)}, \; i\neq j,\\
 \end{align*}
  where $q_i=q^{d_i}, \; [m]_i=[m]_{q^i},\; [m]_i!:=\prod_{k=1}^m[k]_i$ and $K_i=K_{\alpha_i}$, $E_i^{(s)}=\frac{E_i}{[s]_i!}$ and $F_i^{(s)}=\frac{F_i}{[s]_i!}$ (cf. \cite{L88} and \cite{B94}).
  
  \medskip

  Let $\U_q^+=\qag^+$ (resp. $\U_q^-=\qag^-$) be the subalgebra of $\qag$ generated by $E_i$ (respec. $F_i$), $i\in I$, and $\U_q^0$ denote the subalgebra generated by $K_i^{\pm 1} (i\in I), \;\gamma^{\pm \frac{1}{2}},$ and $D^{\pm 1}$. 
  Let $\Phi: \U_q(\ag)\rightarrow \U_q(\ag)$ be the $\C$-algebra automorphism defined by
  \begin{equation*}
  \begin{split} 
      \Phi(E_i)=F_i, \hspace{3mm} \Phi (F_i)=E_i, \hspace{3mm}  \Phi(K_{\alpha})=K_{\alpha}\\
      \Phi(D)=D, \hspace{3mm} \Phi(\gamma^{\pm \frac{1}{2}})=\gamma^{\pm \frac{1}{2}}, \hspace{3mm} \Phi(q^{\pm \frac{1}{2}})=q^{\mp \frac{1}{2}}.
      \end{split}
  \end{equation*}
  Also let $\Omega: \U_q(\ag)\rightarrow \U_q(\ag)$ be the $\C$-algebra anti-automorphism 
  given by
   \begin{equation*}
  \begin{split} 
      \Omega(E_i)=F_i, \hspace{3mm} \Omega (F_i)=E_i, \hspace{3mm}  \Omega(K_{\alpha})=K_{-\alpha}\\
      \Omega(D)=D^{-1}, \hspace{3mm} \Omega(\gamma^{\pm \frac{1}{2}})=\gamma^{\mp \frac{1}{2}}, \hspace{3mm} \Omega(q^{\pm \frac{1}{2}})=q^{\mp \frac{1}{2}}.
      \end{split}
  \end{equation*}

 \subsection{Drinfeld Realizations of $\qag$(\cite{D85}):}
 The quantum affine algebra $\U_q(\ag)$ is an associative algebra over $\C(q^{\frac{1}{2}})$ with generators 
 $x_{i,r}^{\pm 1},  h_{i,s},\; K_i^{\pm 1},\;  \gamma^{\pm1/2},\; D^{\pm 1}\; i\in \dot{I}, r,s\in \Z,\; s\neq 0$ subject to the relations:

\begin{align*}
 &DD^{-1}=D^{-1}D=K_iK_i^{-1}=\gamma^ {1/2}\gamma^{-1/2} =1, \\
 &[\gamma^{\pm1/2},\qag]=[D,K_i^{\pm 1}]=[K_i,K_j]=[K_i,h_{j,s} ]=0,\\
 &Dh_{ir}D^{-1}=q^rh_{i,r}\; \; \; Dx^{\pm1}_{ir}D^{-1}=q^rx^{\pm}_{ir},\\
 &K_ix_{j,r}^\pm K_i^{-1}=q^{\pm (\alpha_i|\alpha_j)}x_{j,r}^\pm,\\
 &[h_{ik},h_{jl}]=\delta_{k,-l}\frac{1}{k}[ka_{i,j}]_i\frac{\gamma^k-\gamma^{-k}}{q_j-q_j^{-1}},\\
 &[h_{i,k},x^{\pm}_{j,l}]=\pm \frac{1}{k}[ka_{i,j}]_i \gamma^{\pm |k|/2}x^{\pm}_{j,k+l},\\
 &x^{\pm}_{i,k+l}x_{j,l}^{\pm 1}-q^{\pm (\alpha_i,\alpha_j)}x_{jl}^{\pm 1}x^{\pm}_{i,k+l}=q^{\pm (\alpha_i,\alpha_j)}x^{\pm}_{i,k}x_{j,l+1}^{\pm 1}-x_{j,l+1}^{\pm 1}x^{\pm}_{i,k},\\
 &[x_{i,k}^+, x_{jl}^-]=\delta_{i,j}\frac{1}{q_i-q_i^{-1}}(\gamma^{(k-l)/2}\psi_{i,k+l}-\gamma^{(l-k)/2}\phi_{i,k+l}),\\
\end{align*}  

 where
 
 \begin{align*}
  &\sum_{k=0}^{\infty}\psi_{i,k}z^k=K_i \;exp((q_i-q_i^{-1})\sum_{l>0}h_{il}z^l),\\
  &\sum_{k=0}^{\infty}\phi_{i,-k}z^{-k}=K_i^{-1}\; exp(-(q_i-q_i^{-1})\sum_{l>0}h_{i,-l}z^{-l}),\\
 \end{align*}
 
 and for $i\neq j$, 
 
 \[Sym_{k_1,\dots k_1-{a_{ij}}}\sum_{r=0}^{1-a_{ij}}(-1)^r{1-a_{ij}\choose r}_ix_{ik_1}^{\pm}\cdots x_{ik_r}^\pm x_{ij}^{\pm}x_{ik_{r+1}}^\pm\dots  x_{ik_{1-a_{i,j}}}^\pm=0.\]
 
\medskip
 We consider the natural triangular decomposition of $\qag$. Let $\U_q^+(S_{nat})$ (resp. $\U_q^-(S_{nat})$) be the subalgebra of $\qag$ generated by $x_{ik}^+$ (resp. $x_{ik}^-$), $i\in \dot{I}, \;k\in \Z$, $h_{il},$ $i\in \dot{I}$  and $ l>0$ (resp. $l<0$).  Also assume $\U_q^0(S_{nat})=\U_q^0$.

 \medskip

 Assume $\U_q[\ag, \ag]$ be the $\C(q^{\frac{1}{2}})$-subalgebra of $\qag$ generated by 

 \[\{x_{i,r}^{\pm 1},  h_{i,s},\; K_i^{\pm 1},\;  \gamma^{\pm1/2},\; i\in \dot{I}, r,s\in \Z,\; s\neq 0\}\]
subject to all the relations of $\qag$ that do not involve $D^{\pm 1}$.
 
\subsection{Imaginary PBW bases for $\qag$:} 
In this subsection we  recall the \emph{imaginary PBW basis} for $\qag$ from \cite{CFM15}, which plays a crucial role in the construction of  quantum imaginary Whittaker modules. We  will use the following theorem.

\begin{theorem}[\cite{CFM15}, Theorem 3.4.7]\label{IPBW}
 Given any $f\in \N^{\Z}_{\bf f}$, set $X^+(f):=\prod_{r\in \Z}X_{\beta_{m}^+}^{f(r)}$ and $X^-(f):=\prod_{r\in \Z}X_{\beta_{m}^-}^{f(r)}$, where the products are defined using the usual ordering of $\Z$.   Also, for $g\in \N^{\N\times \dot{I}}_{\bf f}$ we define $E^{\text{im}}(g):=\prod_{(r,i)\in \N\times I}E_{(r\delta,i)}^{g(r,i)}$, $F^{\text{im}}(g):=\Omega(E^{\text{im}}(g))$. Then the set $\{X^-(f_1)F^{im}(g_2)K_{\alpha}D^r\gamma^{\frac{s}{2}}F^{im}(g_2)X^+(f_2):f_1,f_2\in \N^{\Z}_{\bf f}, g_1,g_2\in \N^{\N\times \dot{I}}, r,s\in \Z\}$ is a  basis of $\U_q(\ag)$. 
\end{theorem}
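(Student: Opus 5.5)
The plan is to derive this imaginary PBW basis from the ordinary (real) PBW basis of $\qag$, which comes from Beck's convex ordering and the braid group action. The first step is to fix the root vectors. For real roots $\beta$ I will take $X_{\beta_m^\pm}$ to be the images of the Drinfeld generators $x^\pm_{i,m}$ under the braid group $\mathfrak{B}$, namely $T_w(E_j)$, ordered by the natural partition $S_{\text{nat}}$ and indexed by $m\in\Z$. For $r>0$, $E_{(r\delta,i)}$ will be the imaginary root vectors, i.e. suitable polynomials in the $h_{i,r}$ or $\psi_{i,r}$. Beck's theorem already supplies a PBW basis built from a \emph{convex} order on the positive roots of $S_{\text{st}}$. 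The first task is to replace it with an ordering adapted to $S_{\text{nat}}$, in which all roots $\alpha+n\delta$ with $\alpha\in\dot{\triangle}_+$ and $n\in\Z$ come together, and the imaginary roots sit in the middle.

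Next I will prove spanning. By the triangular decomposition $\U_q^-(S_{nat})\otimes\U_q^0\otimes\U_q^+(S_{nat})\to\qag$, which follows from the Drinfeld relations by a standard rewriting argument, it suffices to show three things. First, the ordered monomials $X^+(f)$ span the subalgebra generated by the $x^+_{i,k}$. Second, the ordered monomials $E^{im}(g)$ span the commutative algebra generated by the $h_{i,l}$ with $l>0$. Third, the negative parts are obtained by applying $\Omega$. The second point is immediate from commutativity of the $h$'s, so the PBW monomials are just polynomial monomials. For the first point, I will use the Levendorskii–Soibelman type convexity relation: for $\beta<\gamma$, the commutator $X_\gamma X_\beta - q^{(\beta|\gamma)}X_\beta X_\gamma$ is a combination of ordered monomials in root vectors strictly between $\beta$ and $\gamma$. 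Induction on a suitable degree, the $Q^+$-weight together with the length of the monomial, then reorders any word.

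Linear independence will be argued by specialization, or alternatively by a size count. Each $Q$-graded piece of $\U_q^+(S_{nat})$ with a fixed $D$-eigenvalue is finite dimensional only after fixing additional data, since the index $m$ is unbounded. For that reason I prefer the classical limit. Using the $\mathbb{A}$-form generated by divided powers, I specialize $q\to1$, where the monomials become ordinary PBW monomials for $\U(\ag)$ with respect to the natural partition. These are linearly independent by the classical PBW theorem. A $\C(q^{1/2})$-linear dependence can be cleared of denominators and made primitive, so it would specialize to a nontrivial classical dependence, which is a contradiction.

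I expect the main obstacle to be the convexity step for the non-convex ordering. The natural partition is not given by a reduced infinite word in $W$ as the standard one is. Because of this, the commutation relations among the $X_{\beta_m^+}$ for different $m$ are not directly covered by Levendorskii–Soibelman. Here I would try to write $S_{\text{nat}}$ as a limit of convex orders arising from the translation elements $t_{\varpi}$ in the extended affine Weyl group, and to deduce the needed straightening relations from Beck's relations. This is essentially what \cite{CFM15} carry out, and it is precisely the step I would cite from there.
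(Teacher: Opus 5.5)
The paper gives no proof of this statement; it is quoted from \cite{CFM15}. Judged on its own, your outline has a genuine gap, and it sits exactly where the content lies.

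\textbf{The straightening step.} You defer the straightening step for the natural order to \cite{CFM15}, i.e.\ to the theorem being proved. The induction you set up for it also does not terminate. In $\U_q^+(S_{nat})$ the weight spaces are infinite-dimensional, and reordering produces terms of the \emph{same} weight and the \emph{same} length. Already in type $A_1^{(1)}$ one has $x^+_{k+1}x^+_l-q^2x^+_lx^+_{k+1}=q^2x^+_kx^+_{l+1}-x^+_{l+1}x^+_k$. So ``weight plus length'' is not a decreasing measure. What you would need is a well-founded ``strictly between'' property, and that is precisely what a non-convex order lacks.

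Your translation idea does work, but by transporting bases rather than by deducing straightening relations:
\begin{enumerate}
\item Let $\U^+_{\ge 0}$ be the span of Beck's ordered monomials in the convex root vectors $E_{\alpha+n\delta}$, with $\alpha\in\dot\triangle_+$ and $n\ge 0$. It is a subalgebra, because these roots form an end segment of Beck's convex order.
\item Take a translation $T$ in the extended affine braid group chosen to lower indices. It preserves $\U_q^d(+)$ and sends $x^+_{i,k}$ to $\pm x^+_{i,k'}$ with $k'\le k$ (the index of $x^+_{i,k}$ is lowered).
\item Hence $\U_q^d(+)=\bigcup_{N\ge 0}T^N(\U^+_{\ge 0})$, an increasing union.
\item Transporting Beck's basis along $T^N$ gives spanning and linear independence of the $X^+(f)$ at once; $\Omega$ then handles the $X^-(f)$.
\end{enumerate}
This also shows that the real root vectors must be ordered by this limit of convex orders, not arbitrarily. ``The usual ordering of $\Z$'' only makes literal sense for $A_1^{(1)}$.

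\textbf{Independence by specialization.} This argument fails as written, for two reasons.
\begin{enumerate}
\item All Cartan factors $K_\alpha D^r\gamma^{s/2}$ become $1$ in $\bar{\U}$. So distinct basis monomials that differ only in that factor, such as $K_{\alpha_1}$ and $1$, have the same image. A primitive relation then specializes to a nontrivial combination of \emph{dependent} vectors, and no contradiction follows.
\item For the remaining factors, appealing to the classical PBW theorem requires the natural surjection $\U(\ag)\to\bar{\U}$ to be injective. That is a nontrivial fact, usually derived from a PBW-type basis of the integral form, so the argument is circular as it stands.
\end{enumerate}
What has to be supplied instead, with no specialization at all, is the following.
\begin{enumerate}
\item Beck's triangular decomposition for the Drinfeld generators (cf.\ \cite{B94}): multiplication $\U_q^d(-)\otimes\mathcal{H}_q^d\otimes\U_q^d(+)\to\qag$ is bijective. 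Rewriting gives only surjectivity.
\item Multiplication $\C(q^{1/2})[h_{i,-l}]\otimes\C(q^{1/2})[K_i^{\pm1},\gamma^{\pm 1/2},D^{\pm1}]\otimes\C(q^{1/2})[h_{i,l}]\to\mathcal{H}_q^d$ is bijective. The independence of the $h_{i,l}$ comes from the imaginary root vectors in Beck's convex basis.
\item The bases of $\U_q^d(\pm)$ obtained above.
\end{enumerate}
The stated basis is exactly these three bases multiplied in the order $\U_q^d(-)\cdot\mathcal{H}_q^d\cdot\U_q^d(+)$.
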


Recall the action of the Braid group $\mathfrak{B}$  generated by  $T_i,\; i\in I$ on $\ag$:
\begin{equation*}
 T_iE_i=-F_iK_i,\; T_iE_j=\sum_{r=0}^{-a_{ij}}(-1)^{r-a_{ij}}q_i^{-r}E_{i}^{(-a_{ij}-r)}E_jE_i^{(s)}\;\; \text{if} \;i\neq j   
\end{equation*}
\begin{equation*}
 T_iF_i=-K_i^{-1}E_i,\; T_iF_j=\sum_{r=0}^{-a_{ij}}(-1)^{r-a_{ij}}q_i^{r}F_{i}^{(s)}F_jF_i^{(-a_{ij}-r)}\;\; \text{if} \;i\neq j   
\end{equation*}
\begin{equation*}
    T_iK_{\beta}=K_{s_i\beta}, \; \;\beta\in Q, \;\;\; T_iD=DK_i^{-\delta_{i0}}.
\end{equation*}
Then we have $\Omega T_i=T_i\Omega$ and $\Phi T_i=T_i^{-1}\Phi$.

 \subsection{$\mathbb{A}$-form of $\qag$:}
We will use the following $\mathbb{A}$-form of $\qag$ (cf. \cite{CFM15}). Let $\mathbb{A}=\C[q^{\pm \frac{1}{2}},\frac{1}{[n]_{q_i}},\; i\in I,\; n>1]$ and let $U_{\mathbb{A}}$ be the unital $\mathbb{A}$-subalgebra of $\qag$ generated by the elements

\[x_{ir}^{\pm},\; h_{is}, \; K_i^{\pm1}, \gamma^{\pm \frac{1}{2}},\; D^{\pm 1},\;  \left [K_i\;; \;s \atop n\right],\; \left [D\;; \;s \atop n\right],\; \left [\gamma\;; \;s \atop 1\right]_i, \left [\gamma \psi_i\;; \;k,\;l \atop 1\right]\]

 where $i\in \dot{I}, \; r,s\in \Z, \;s\neq 0$ with 
 \[\left[\gamma\;; \;s \atop 1\right]_i=\frac{\gamma^s-\gamma^{-s}}{q_i-q_i^{-1}},\]

\[\left [\gamma \psi_i\;; \;k,\;l \atop 1\right]=\frac{\gamma^{\frac{k-l}{2}}\psi_{i,k+l}-\gamma^{\frac{l-k}{2}}\phi_{i,k+l}}{q_i-q_i^{-1}},\]
 
\[\left [K_i\;; \;s \atop n\right]=\prod_{r=1}^n\frac{K_iq_i^{s-r+1}-K_i^{-1}q_i^{-(s-r+1)}}{q_i^r-q_i^{-r}},\]

\[\left [D\;; \;s \atop n\right] =\prod_{r=1}^n\frac{Dq_0^{s-r+1}- D^{-1}q_0^{-(s-r+1)}}{q_0^r-q_0^{-r}},\]
where $q_0=q^{d_0}$. Let $\U_{\mathbb{A}}^+$ (resp. $\U_{\mathbb{A}}^-$) denote the subalgebra of $\U_{\mathbb{A}}$ generated by $x_{ik}^+,\; h_{i,l}$, where $i\in \dot{I}, k\in \Z$, $l\in \Z_{>0}$ (resp. $x_{ik}^-,\; h_{i,l}$, where $i\in \dot{I}, k\in \Z$, $l\in \Z_{<0}$).  Let $\U_{\mathbb{A}}^0$ be the subalgebra of $\U_{\mathbb{A}}$ generated by the elements $\gamma^{\pm \frac{1}{2}},\; K_i^{\pm1},\; \left [K_i\;; \;s \atop n\right],\; D^{\pm 1},\; \left [D\;; \;s \atop n\right],\; \left [\gamma\;; \;s \atop 1\right]_i,\; \left [\gamma \psi_i\;; \;k,\;-k \atop 1\right].$

\begin{theorem}[\cite{CFM15}, Corollary 3.4.3]\label{algaf}
    The algebra $\U_{\mathbb{A}}$ inherits the natural triangular decomposition of $\qag$. In particular, any element $u$ of $\U_{\mathbb{A}}$ can be written as an $\mathbb{A}$-linear combination of monomials of the form $u^-u^0u^+$ where $u^\pm\in \U_{\mathbb{A}}^\pm$ and $u^0\in \U^0_{\mathbb{A}}$.
\end{theorem}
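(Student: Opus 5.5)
Theorem \ref{algaf} says that $\U_{\mathbb{A}}$ inherits the natural triangular decomposition. The plan is to show that the $\mathbb{A}$-span $\mathcal{S}$ of all products $u^-u^0u^+$, with $u^\pm\in\U_{\mathbb{A}}^\pm$ and $u^0\in\U_{\mathbb{A}}^0$, equals $\U_{\mathbb{A}}$. Since $1\in\mathcal{S}$ and $\mathcal{S}\subseteq\U_{\mathbb{A}}$, it suffices to prove that $\mathcal{S}$ is stable under left multiplication by each $\mathbb{A}$-generator of $\U_{\mathbb{A}}$. Two classes of generators are easy. Generators of $\U_{\mathbb{A}}^-$ ($x^-_{ik}$ and $h_{i,l}$ with $l<0$) simply absorb into $u^-$. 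Generators of $\U_{\mathbb{A}}^0$, and the remaining group-like or central elements $K_i^{\pm1}$, $D^{\pm1}$, $\gamma^{\pm1/2}$, can be moved past $u^-$: conjugation by $K_i$, $D$ or $\gamma$ multiplies each Drinfeld generator by a power of $q$, and $q^{\pm1}\in\mathbb{A}$, so the result stays in $\mathcal{S}$. For the divided elements $\left[K_i;s\atop n\right]$ and $\left[D;s\atop n\right]$, passing them over a root vector of weight $\beta$ shifts $s$ by $(\alpha_i^\vee,\beta)$ or by the $\delta$-degree. This is the standard identity, and it keeps us inside $\U_{\mathbb{A}}^0$.

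The real content is left multiplication by $x^+_{ik}$ and $h_{i,l}$ ($l>0$) on $u^-u^0u^+$. By induction on the length of $u^-$ (a product of $x^-_{jm}$'s and $h_{j,-l}$'s), it is enough to control the commutators of these generators with one another. The relevant relations are:
\[[x^+_{ik},x^-_{jm}]=\delta_{ij}\left[\gamma\psi_i;k,m\atop 1\right],\qquad [h_{i,l},x^-_{j,m}]=-\tfrac1l[la_{ij}]_i\gamma^{-l/2}x^-_{j,l+m},\]
\[[h_{i,l},h_{j,-l}]=\tfrac1l[la_{ij}]_i\left[\gamma;l\atop 1\right]_j,\qquad [x^+_{ik},h_{j,-l}]\in\mathbb{A}\,\gamma^{\pm l/2}x^+_{i,k-l}.\]
In each case the coefficient is either $[la_{ij}]_i/l$ with $a_{ij}\in\Z$, which is a Laurent polynomial in $q$ divided by $l$ (and $1/l\in\C\subset\mathbb{A}$), or one of the adjoined divided elements. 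So every commutator lies in $\U_{\mathbb{A}}$ and has a lower "negative length". Commuting the positive generator to the right then produces terms of the form (shorter $u^-$)$\cdot$(element of $\U_{\mathbb{A}}^0$ or $\U_{\mathbb{A}}^{\pm}$)$\cdots$, and the induction applies.

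I expect the main obstacle to be that $\left[\gamma\psi_i;k,m\atop 1\right]$ with $k+m\neq0$ is not in $\U_{\mathbb{A}}^0$ as defined, because only the $k,-k$ versions are included. When $k+m>0$ this element is essentially $\gamma^{\cdot}\psi_{i,k+m}/(q_i-q_i^{-1})$. Using the exponential generating function for $\psi_i$, I would rewrite it as $K_i$ times an $\mathbb{A}$-polynomial in the $h_{i,l}$, $l>0$, so it lies in $\U^0_{\mathbb{A}}\U^+_{\mathbb{A}}$. The symmetric argument, via $\phi_i$, puts the case $k+m<0$ in $\U^-_{\mathbb{A}}\U^0_{\mathbb{A}}$. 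Verifying that the exponential expansion divided by $q_i-q_i^{-1}$ has coefficients in $\mathbb{A}$ is the delicate point, and it is precisely why $\mathbb{A}$ contains the inverses of the $[n]_{q_i}$. The expansion produces factors $(q_i-q_i^{-1})^{r-1}/r!$, which are harmless over $\C$. After that, terms of the form $u^0u^-$ must be reordered to $u^-u^0$, which reduces to the easy group-like case. Finally, I would close the induction by noting that the imaginary PBW basis (Theorem \ref{IPBW}) guarantees the process terminates, since the $\delta$-degree and the total length are bounded.
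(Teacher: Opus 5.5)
The paper gives no proof of this statement; it quotes it from \cite{CFM15}. So your proposal supplies an argument where the paper supplies only a reference. Your route is a direct one: you show that the $\mathbb{A}$-span $\mathcal{S}$ of products $u^-u^0u^+$ is stable under left multiplication by the generators, using only the Drinfeld relations and the generating series for $\psi_i,\phi_i$.

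This works, and your key observations are correct. The commutator $[x^+_{ik},x^-_{im}]$ is the adjoined generator $(\gamma^{(k-m)/2}\psi_{i,k+m}-\gamma^{(m-k)/2}\phi_{i,k+m})/(q_i-q_i^{-1})$, and it behaves as follows:
\begin{itemize}
\item for $k+m=0$ it lies in $\U^0_{\mathbb{A}}$;
\item for $k+m>0$ it is $\gamma^{(k-m)/2}K_i$ times a polynomial in the $h_{i,l}$ ($l>0$), with coefficients $(q_i-q_i^{-1})^{j-1}/j!\in\C[q^{\pm1/2}]$;
\item for $k+m<0$ it lies in $\U^-_{\mathbb{A}}\U^0_{\mathbb{A}}$.
\end{itemize}
All remaining commutators are $\mathbb{A}$-multiples of $x^\pm$'s or of the central elements $(\gamma^l-\gamma^{-l})/(q_j-q_j^{-1})$.

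The gain of your approach is self-containment: no braid group and no PBW machinery are needed. The cost is that it gives only spanning. The first sentence of the statement, read as a tensor product factorization, needs one more remark. $\mathbb{A}$ is a localization of the PID $\C[q^{\pm1/2}]$, so the torsion-free pieces $\U^\pm_{\mathbb{A}},\U^0_{\mathbb{A}}$ are flat. Hence $\U^-_{\mathbb{A}}\otimes_{\mathbb{A}}\U^0_{\mathbb{A}}\otimes_{\mathbb{A}}\U^+_{\mathbb{A}}$ injects into the corresponding tensor product over $\C(q^{1/2})$, which maps isomorphically onto $\qag$. The ``in particular'' clause is exactly what you prove, and it is what the paper actually uses later, for the $\mathbb{A}$-form of $\widehat{K}^q_{\zeta,\lambda,t}$.

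Some points in your sketch need fixing.

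\begin{itemize}
\item Your claim that every commutator lowers the negative length is false. For $k+m<0$, $\phi_{i,k+m}$ is $K_i^{-1}$ times a polynomial of degree up to $|k+m|$ in the $h_{i,-l}$, so $u^-$ can become longer. That term is harmless, since it is already in normal form once $K_i^{-1}$ is pushed right. But the induction must be arranged so that no positive generator is ever applied to such a term.
\item The clean arrangement is in two steps. Let $\mathcal{S}_n$ be the span of the products with $u^-$ of length at most $n$.
\begin{itemize}
\item First check directly that $h_{i,l}\mathcal{S}_n\subseteq\mathcal{S}_n$ for $l>0$. This holds because $[h_{i,l},x^-_{j,m}]$ is an $\mathbb{A}$-multiple of $\gamma^{\pm l/2}x^-_{j,m+l}$ and $[h_{i,l},h_{j,-l'}]$ is central in $\U^0_{\mathbb{A}}$.
\item Then prove $x^+_{ik}\mathcal{S}_n\subseteq\mathcal{S}$ by induction on $n$. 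The case $k+m>0$ then only uses the $h$-stability of $\mathcal{S}_{n-1}$, applied once for each factor of the polynomial.
\end{itemize}
\item The inverses of $[n]_{q_i}$ are not what makes $\psi_{i,r}/(q_i-q_i^{-1})$ integral; you yourself note the coefficients already lie in $\C[q^{\pm1/2}]$.
\item Theorem \ref{IPBW} plays no role in termination. It is a statement over $\C(q^{1/2})$ and says nothing about $\mathbb{A}$-coefficients; termination comes from the induction itself.
\end{itemize}
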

 
\section{Key identities in $\mathcal{U}([\ag,\ag])$}
 Let $\tilde{\g}=[\ag, \ag]$ be the derived algebra of $\ag$, that is the universal central extensions of the loop algebra of $\g$. 
 
 First we fix a total ordering on $\dot{\Pi}$ by $\alpha_1<\alpha_2<\cdots<\alpha_n$. Then we extend it to $\dot{Q}^+$ lexicographically. This will also induce a total ordering on  $\dot{\triangle}^+$, say $\beta_1<\beta_2<\cdots < \beta_m$. Hence, if $\beta_j-\beta_i\in \dot\triangle$ for some $1\leq i<j\leq m$ will imply that $\beta_j-\beta_i\in \dot\triangle_+$. We have $\g_+=\operatorname{span}_{\C}\{X_{\beta_i}:1\leq i\leq m  \}$, $\g_-=span_{\C}\{Y_{\beta_i}:1\leq i\leq m \}$ and
$\g=span_{\C}\{X_{\beta_i},Y_{\beta_i}, h_{\alpha_j}:1\leq i\leq m,\; 1\leq j\leq n\}$ where $h_{\beta_i}=[X_{\beta_i}, Y_{\beta_i}]$ is a $\Z$-linear combination of $\{h_{\alpha_j}: 1\leq j\leq n\}$.  For any $\beta\in \dot{\triangle}_{+}$, $\{X_{\beta},Y_{\beta}, h_{\beta}\}$ is the $\mathfrak{sl}_2$-triple.


Given any $p\in \N^\Z_{\bf{f}}$ and $\beta_i\in \dot{\triangle}^+$ we define
\[Y_{\beta_{i}}^p:=\prod_{r\in \Z}(Y_{\beta_i}(r))^{p(r)}\in\mathcal{U}([\ag,\ag]),\]
where $x(r):=x\otimes t^r$ for $r\in \Z$ and $x\in \g$. The following is obvious.
\begin{lemma}
    $\mathcal{U}(\g_-\otimes \C[t^{\pm1}])=span_{\C}\{Y_{\beta_{1}}^{p_1}\cdots Y_{\beta_{m}}^{p_m}: p_i\in \N^\Z_{\bf f}, 1\leq i\leq m\}$.
\end{lemma}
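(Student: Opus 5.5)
The lemma is a direct consequence of the Poincaré--Birkhoff--Witt theorem, so the plan is to exhibit an ordered basis of $\g_-\otimes\C[t^{\pm1}]$ and read off the resulting PBW monomials. First I note that $\g_-\otimes\C[t^{\pm 1}]$ is a Lie subalgebra of $\tilde{\g}$. Indeed, for $x,y\in\g_-$ we have $(x|y)=0$, since the invariant form pairs $\g_\alpha$ nontrivially only with $\g_{-\alpha}$. Hence the central term in $[x(r),y(s)]$ vanishes, and $[x(r),y(s)]=[x,y](r+s)\in\g_-\otimes\C[t^{\pm1}]$. Its universal enveloping algebra therefore sits naturally inside $\U([\ag,\ag])$, by PBW for the pair consisting of a subalgebra and $\tilde{\g}$.

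Next I take the basis $\{Y_{\beta_i}(r): 1\le i\le m,\ r\in\Z\}$ of $\g_-\otimes\C[t^{\pm1}]$. I totally order it as follows: first by the index $i$, using the fixed order $\beta_1<\cdots<\beta_m$, and then, within a fixed $i$, by the usual order on $r\in\Z$. The PBW theorem, which holds for Lie algebras of arbitrary dimension with a totally ordered basis, then says that the ordered monomials
\[\prod_{i=1}^m\prod_{r\in\Z} Y_{\beta_i}(r)^{p_i(r)},\qquad p_i\in\N^\Z_{\bf f},\]
form a basis of $\U(\g_-\otimes\C[t^{\pm1}])$. Since the factors for a fixed $i$ commute pairwise, these are exactly $Y^{p_1}_{\beta_1}\cdots Y^{p_m}_{\beta_m}$. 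Indeed, $[Y_{\beta_i}(r),Y_{\beta_i}(s)]=[Y_{\beta_i},Y_{\beta_i}](r+s)=0$, so the internal ordering inside each $Y_{\beta_i}^{p}$ is irrelevant.

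The only point needing a little care is making sure that finitely supported functions $p_i$ correspond exactly to the finite monomials. This holds because each PBW monomial involves finitely many basis vectors, so the spanning statement follows, and in fact linear independence as well. If a self-contained argument without invoking PBW in infinite dimension were preferred, one could instead use an induction on length. In that argument one straightens any word in the $Y_{\beta_i}(r)$ by commutators, each of which is again a $\Z$-combination of elements $Y_{\beta_k}(r+s)$ of lower total length. I expect no real obstacle here.
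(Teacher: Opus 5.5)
Your proposal is correct and matches what the paper has in mind: the paper gives no proof and calls the lemma obvious, and the intended justification is exactly PBW applied to the subalgebra $\g_-\otimes\C[t^{\pm1}]$, with the basis $Y_{\beta_i}(r)$ ordered first by $i$ and then by $r$. Your extra checks (the central term vanishes on $\g_-\otimes\C[t^{\pm1}]$, and the $Y_{\beta_i}(r)$ for fixed $i$ commute) supply precisely the details the paper leaves implicit, and your argument shows these monomials form a basis, not just a spanning set.
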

Let $ht_1:\dot{Q}_+\rightarrow\N$ is given by $ht_1(\alpha)=\sum_{i=1}^nk_i$, where $\alpha=\sum_{i=1}^nk_i\alpha_i\in \dot{Q}_+$. Given any $p\in \N^{\Z}_{\bf f}$, we define the height of $p$ as 
\[\text{ht}(p):=\sum_{r\in \Z}p(r).\]

 Clearly $\mathcal{U}(\g_-\otimes \C[t^{\pm1}])$ is a weight $\h$-module with respect to the adjoint action. For a monomial of the form $Y_{\beta_{1}}^{p_1}\cdots Y_{\beta_{m}}^{p_m}$, its weight is 
\[\text{wt}(Y_{\beta_{1}}^{p_1}\cdots Y_{\beta_{m}}^{p_m})=\sum_{i=1}^mht(p_i)(-\beta_i)\in -\dot{Q}_+.\] 
Given a monomial in the above form we also say that its height is 
 \[ht(Y_{\beta_{1}}^{p_1}\cdots Y_{\beta_{m}}^{p_m})=\sum_{i=1}^m ht(p_i) ht_1(\beta_i).\]

 Then we have the following easy statements
\begin{lemma}  Let $r,s\in \Z$ and $k\in \N$. The following holds.
\begin{itemize}
\item[1.]   
    $h_{\beta}(r).Y_{\beta}^k(s)=Y_{\beta}^k(s)h_{\beta}(r)-\beta(h_\beta)kY_{\beta}^{k-1}(s)Y_{\beta}(r+s).$
\item[2.]
     If $r\neq -s$  then we have
     \[X_{\beta}(r).Y_{\beta}^k(s)=Y_{\beta}^k(s)X_{\beta}(r)+kY_{\beta}^{k-1}(s)h_{\beta}(r+s)-\frac{\beta(h_\beta)}{2}k(k-1)Y_{\beta}^{k-2}(s)Y_{\beta}(r+s).\]
\end{itemize}
\end{lemma}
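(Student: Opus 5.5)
Both identities follow from expanding a commutator with a power, using the derivation property of $\operatorname{ad}$ in $\mathcal{U}([\ag,\ag])$:
\[
[a,b^k]=\sum_{i=0}^{k-1} b^{i}[a,b]b^{k-1-i}.
\]
The only inputs are the loop bracket $[x(r),y(s)]=[x,y](r+s)+\delta_{r+s,0}\,r\,(x|y)c$ and the $\mathfrak{sl}_2$-triple relations $[h_\beta,Y_\beta]=-\beta(h_\beta)Y_\beta$ and $[X_\beta,Y_\beta]=h_\beta$.

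\textbf{Part 1.} First compute $[h_\beta(r),Y_\beta(s)]=-\beta(h_\beta)Y_\beta(r+s)$. The central term vanishes because $(h_\beta|Y_\beta)=0$, since $Y_\beta$ has nonzero $\h$-weight. Next check that $Y_\beta(r+s)$ commutes with $Y_\beta(s)$: $[Y_\beta,Y_\beta]=0$ and $(Y_\beta|Y_\beta)=0$. So every summand in the expansion of $[h_\beta(r),Y_\beta^k(s)]$ equals $-\beta(h_\beta)Y_\beta^{k-1}(s)Y_\beta(r+s)$. Summing the $k$ equal terms and writing $h_\beta(r)Y^k_\beta(s)=Y_\beta^k(s)h_\beta(r)+[h_\beta(r),Y_\beta^k(s)]$ gives the first identity.

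\textbf{Part 2.} Here $[X_\beta(r),Y_\beta(s)]=h_\beta(r+s)+\delta_{r+s,0}\,r\,(X_\beta|Y_\beta)c=h_\beta(r+s)$, because the hypothesis $r\neq -s$ kills the central term. The expansion then gives
\[
[X_\beta(r),Y_\beta^k(s)]=\sum_{i=0}^{k-1}Y_\beta^i(s)\,h_\beta(r+s)\,Y_\beta^{k-1-i}(s).
\]
In each summand, move $h_\beta(r+s)$ to the right of $Y_\beta^{k-1-i}(s)$ using Part 1 with $r$ replaced by $r+s$. This is legitimate because the resulting $Y_\beta(r+2s)$ again commutes with all $Y_\beta(s)$. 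Each summand becomes
\[
Y_\beta^{k-1}(s)h_\beta(r+s)-\beta(h_\beta)(k-1-i)\,Y_\beta^{k-2}(s)Y_\beta(r+2s).
\]
Summing over $i$ with $\sum_{i=0}^{k-1}(k-1-i)=k(k-1)/2$ yields
\[
kY^{k-1}_\beta(s)h_\beta(r+s)-\tfrac{\beta(h_\beta)}{2}k(k-1)Y_\beta^{k-2}(s)Y_\beta(r+2s).
\]

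\textbf{Expected obstacle: the loop index in the last term.} All steps are routine. The one point needing care is the loop index in the quadratic term. Bracketing $h_\beta(r+s)$ with $Y_\beta(s)$ shifts the degree to $(r+s)+s=r+2s$, not $r+s$. So I would record the last term as $Y_\beta(r+2s)$, which is what the computation produces. I read the $Y_\beta(r+s)$ in the displayed statement as a misprint, since the computation gives $r+2s$ unless $s=0$.

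I would also check that no central terms enter anywhere else. Every bracket used is either between $Y$'s, which gives zero, or between $h_\beta$ and $Y_\beta$, which are orthogonal under $(\cdot|\cdot)$. So the only possible central contribution is the one excluded by $r\neq -s$.
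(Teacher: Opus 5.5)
Your proof is correct. It is the direct commutator computation that the paper leaves implicit (the paper calls these ``easy statements'' and gives no argument). You are also right that the last term of Part 2 must be $Y_\beta(r+2s)$ rather than $Y_\beta(r+s)$: the printed version is not homogeneous in $t$-degree, and $r+2s$ is what the paper itself uses in Proposition~\ref{Main Identity}, whose corresponding term is $Y_\beta(s_1+s_2+r)$ with $s_1=s_2=s$.
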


\begin{proposition}\label{Main Identity}
\begin{itemize}
\item[1.]
    Let $p\in \N^\Z_{\bf f}$ and $r\notin -\text{supp}(p)$. Then we have
    \begin{equation*}
          X_\beta(r)Y_{\beta}^p=Y^p_\beta X_\beta(r)+\sum_{s\in \text{supp}(p)} \frac{\partial Y_{\beta}^p}{\partial Y_{\beta}(s)}h_{\beta}(r+s)-\frac{\beta(h_\beta)}{2}\sum_{s_1,s_2\in \text{supp}(p)}\frac{\partial^2 Y_{\beta}^p}{\partial Y_{\beta}(s_1)\partial Y_{\beta}(s_2)}Y_{\beta}(s_1+s_2+r).
    \end{equation*}

\item[2.]
    Assume $1\leq i\leq m$, $p_i,\cdots ,p_m\in \N^{\Z}_{\bf f}$ and  $r\notin \cup_{j=i}^m-\text{supp}(p_j)$. Then we have 
\begin{multline}
    X_{\beta_i}(r)Y_{\beta_i}^{p_i}\cdots Y_{\beta_m}^{p_m}=Y_{\beta_i}^{p_i}Y_{\beta_{i+1}}^{p_{i+1}}\cdots Y_{\beta_m}^{p_m}X_{\beta_i}(r)+ Y_{\beta_i}^{p_i} [X_{\beta_i}(r),Y_{\beta_{i+1}}^{p_{i+1}}\cdots Y_{\beta_m}^{p_m} ]
    \\ +\sum_{s\in \text{supp}(p_i)} \frac{\partial Y_{\beta_i}^{p_i}}{\partial Y_{\beta_i}(s)}Y_{\beta_{i+1}}^{p_{i+1}}\cdots Y_{\beta_m}^{p_m} h_{\beta_i}(r+s)\\
    -\frac{\beta_i(h_{\beta_i})}{2}\sum_{s_1,s_2\in \text{supp}(p_i)}\frac{\partial^2 Y_{\beta_i}^{p_i}}{\partial Y_{\beta_i}(s_1)\partial Y_{\beta_i}(s_2)}Y_{\beta_i}(s_1+s_2+r)Y_{\beta_{i+1}}^{p_{i+1}}\cdots Y_{\beta_m}^{p_m}    
    \\ 
   -\sum_{s\in \text{supp}(p_i)}\sum_{t=i+1}^m \frac{\partial Y_{\beta_i}^{p_i}}{\partial Y_{\beta_i}(s)} (Y_{\beta_{i+1}}^{p_{i+1}})^*_t\cdots (Y_{\beta_m}^{p_m})^*_t
    \end{multline}
    where 
    \[(Y_{\beta_{i+l}}^{p_{i+l}})^*_t=\begin{cases}
     Y_{\beta_{i+l}}^{p_{i+l}} & \text{if} \; t\neq i+l\\
    \beta_{i+l}(h_{{\beta}_i}) \sum_{s_1\in \text{supp}(p_{i+l})} \frac{\partial Y_{\beta_{i+l}}^{p_{i+l}}}{\partial Y_{\beta_{i+l}}(s_1)}  Y_{\beta_{i+l}}(r+s+s_1)  & \text{if} \; t= i+l
    \end{cases}\]
    with $1\leq l\leq m-i$.
    \end{itemize}
\end{proposition}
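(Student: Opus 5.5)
Both parts come from expanding a commutator with the derivation rule $[a,xy]=[a,x]y+x[a,y]$ in $\mathcal{U}(\tilde{\g})$ and recording which brackets vanish because of $r\notin -\text{supp}(p)$. Throughout we use that all $Y_{\beta}(s)$, $s\in\Z$, commute with one another: $[\g_{-\beta},\g_{-\beta}]=0$ and the cocycle $(Y_\beta|Y_\beta)$ vanishes. Hence $\mathcal{U}$ of their span is a polynomial algebra, and the partial derivatives $\partial/\partial Y_\beta(s)$ in the statement are meaningful.

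For part 1, write $Y^p_\beta=\prod_{s\in\text{supp}(p)}Y_\beta(s)^{p(s)}$ and induct on $\text{ht}(p)$, with the Lemma above (item 2, for $r\neq -s$) as the one-variable case. The inductive step is more transparent as a derivation computation. Since $r+s\neq 0$ for every $s$ in the support, $[X_\beta(r),Y_\beta(s)]=h_\beta(r+s)$ with no central term. So $\text{ad}X_\beta(r)$ acts on the polynomial algebra by
\[[X_\beta(r),Y^p_\beta]=\sum_{s}\frac{\partial Y_\beta^p}{\partial Y_\beta(s)}\Big|_{\text{ordered}}\, h_\beta(r+s),\]
where each $h_\beta(r+s)$ still sits at its original position. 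Next move each $h_\beta(r+s)$ to the right past the remaining factors, using $[h_\beta(r+s),Y_\beta(s')]=-\beta(h_\beta)Y_\beta(r+s+s')$; no central term appears because $(h_\beta|Y_\beta)=0$. Each ordered pair $(s_1,s_2)$ of factor positions then produces a term $-\beta(h_\beta)\,\cdot\,\partial^2 Y^p/\partial Y(s_1)\partial Y(s_2)\cdot Y_\beta(s_1+s_2+r)$. The resulting $Y_\beta(s_1+s_2+r)$ commutes with everything, so its position is irrelevant. Summing over ordered pairs double counts the unordered ones, which gives the factor $\tfrac12$. This reproduces the formula exactly, and the bookkeeping is the same as in the Lemma, where the coefficient is $k(k-1)/2$.

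For part 2, write $X_{\beta_i}(r)\,Y_{\beta_i}^{p_i}\,Z$ with $Z=Y_{\beta_{i+1}}^{p_{i+1}}\cdots Y_{\beta_m}^{p_m}$, and expand it as
\[X_{\beta_i}(r)Y^{p_i}Z=[X_{\beta_i}(r),Y^{p_i}]Z+Y^{p_i}[X_{\beta_i}(r),Z]+Y^{p_i}ZX_{\beta_i}(r).\]
Substitute part 1 into the first summand. The term $Y^{p_i}X_{\beta_i}(r)Z$ coming from part 1 has already been split off; the remaining terms are the $h_{\beta_i}(r+s)$ terms and the second-order terms, both followed by $Z$. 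The second-order terms already appear in the stated form. In the terms $\frac{\partial Y^{p_i}}{\partial Y(s)}h_{\beta_i}(r+s)Z$, commute $h_{\beta_i}(r+s)$ to the right through $Z$. For $t>i$ we have $[h_{\beta_i}(r+s),Y_{\beta_t}(s_1)]=-\beta_t(h_{\beta_i})Y_{\beta_t}(r+s+s_1)$, again with no central term. Applying the derivation rule across the product $Z$ replaces one block $Y_{\beta_t}^{p_t}$ at a time by $\beta_t(h_{\beta_i})\sum_{s_1}\partial Y^{p_t}/\partial Y_{\beta_t}(s_1)\cdot Y_{\beta_t}(r+s+s_1)$, which is precisely the starred factor $(Y^{p_t})^*_t$, with a minus sign. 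What is left is $\frac{\partial Y^{p_i}}{\partial Y(s)}\,Z\,h_{\beta_i}(r+s)$, the third line of the statement. The middle summand $Y^{p_i}[X_{\beta_i}(r),Z]$ is kept unexpanded, as the statement does.

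The main obstacle is in part 2. There $Y_{\beta_t}(r+s+s_1)$ is placed at the position of the differentiated factor inside the $\beta_t$ block, and the blocks $Y_{\beta_t}$, $Y_{\beta_{t'}}$ for different roots need not commute. So one has to check that the starred expression is read with the new factor inserted in place. If $-\beta_i-\beta_t$ is a root, extra terms of the form $[Y_{\beta_t}(\cdot),Y_{\beta_{t'}}(\cdot)]$ would appear if one tried to reorder. Since none are needed, I would simply state the ordering convention. I would also verify that the hypothesis $r\notin\bigcup_{j\ge i}-\text{supp}(p_j)$ is what removes central terms throughout: it is used in part 1 through $r+s\neq 0$, and in the $h$-commutations through $(h|Y)=0$.
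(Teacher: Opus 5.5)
Your proposal is correct and takes essentially the paper's approach: the paper's proof is only the one-line remark that the identities follow by applying the preceding lemma repeatedly, and your derivation-rule expansion makes that computation explicit. In it you commute $X_{\beta_i}(r)$ through, push each $h_{\beta_i}(r+s)$ to the right, get the factor $\tfrac12$ from ordered versus unordered pairs of positions, and keep each starred factor inside its own $\beta_t$-block. One small correction: only $r\notin-\text{supp}(p_i)$ is actually used, since the absence of a central term in $[h_{\beta_i}(r+s),Y_{\beta_t}(s_1)]$ comes from $(h_{\beta_i}|Y_{\beta_t})=0$ and not from the hypothesis.
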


\begin{proof}
      Using the previous  lemma sufficiently many times we get the desired identities.
  \end{proof}

\begin{remark} 
Note that  partial derivatives above are defined  formally. Also note
that the second expression on the right-hand side of the previous proposition does not contain any elements involving imaginary roots.  
\end{remark}

\section{Whittaker modules over Heisenberg Lie algebras}
Consider the generalized Heisenberg   Lie subalgebras of $\ag$:
 
\[H^\prime=\oplus _{\alpha\in \Delta^{im}}\ag_{\alpha}\oplus \C c \;,\]

and its  extension  by the derivation:
\[ H=H^\prime\oplus \C d,\]

Let $\{\xi_i :i\in \dot{I}\}$ be an orthonormal basis of $\h$ with respect to the bilinear form $\mathbf{(}-,-\mathbf{)}$. We set 
\begin{equation}
b_{ir}:=\begin{cases}
    \frac{1}{r}\xi_i\otimes t^r & \text{if} \;r>0, \\
    \xi_i\otimes t^r &  \text{if} \;r<0
\end{cases}
\end{equation}

for $(i,r)\in \dot{I} \times \Z^\times$.  
We have:
\begin{equation}\label{hr}
[b_{ir},b_{js}]=\delta_{ij}\delta_{r+s,0}c,\;  [d,b_{ir}]=rb_{ir},    
\end{equation}
 
where $(i,r), (j,s)\in \dot{I}\times \Z^\times$.\\

Consider the  triangular decompositions of $H^\prime$ and $H$ as follows: $H^\prime=H^\prime(-)\oplus H^\prime(0)\oplus H^\prime(+)$ and $H=H(-)\oplus H(0)\oplus H(+)$,  where 
\begin{align*}
 H^\prime(\pm)=span_{\C}\{b_{ir}: (i,\pm r)\in  \dot{I}\times \Z_{>0}\},\; H^\prime(0)=\C c,\\
 H(+)= H^\prime(+),\; H(0)=\C c,\\H(-)=\C\{b_{ir},d:  (i,r)\in \dot{I}\times \Z_{<0}\}.
\end{align*}

 For any  $\zeta \in \text{Hom}_{\C }(H^\prime(+),\C)$ and $a\in \C$ one defines the following modules
 \begin{equation}\label{hvm}
   M^\prime_{\zeta,a}:=\mathcal{U}(H^{\prime})\otimes_{\mathcal{U}(H^{\prime}(+)\oplus H^\prime(0))}\C v,  
 \end{equation}

 \begin{equation}
   M_{\zeta,a}:=\mathcal{U}(H)\otimes_{\mathcal{U}(H(+)\oplus H(0))}\C v,  
 \end{equation}

where the positive parts of the Lie algebras act on $\C v$ by the Lie algebra homomorphism $\zeta$ and $c$ acts by multiplication by $a$.
 The irreducibility of these modules is as follows (cf. \cite{C08}, Proposition $6$ and Proposition $25$).

\begin{proposition}
Let $ H^\prime,H, M^\prime_{\zeta,a}, M_{\zeta,a}$ be as above and $a\in \C^\times$.
  \begin{enumerate}
      \item If  $\zeta\neq0$ then $M^\prime_{\zeta,a}$ is an irreducible $H^\prime$module, then all Whittaker $H^\prime$-modules  are of this form.
      \item If  $\text{Supp}(\zeta)=\infty$ then  $M_{\zeta,a}$ is irreducible.
  \end{enumerate}  
\end{proposition}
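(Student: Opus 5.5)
Part (1) is the easiest place to start. Since $c$ acts by $a\neq 0$, rescale the generators: put $x_{ir}:=b_{ir}$ for $r<0$ and let $\partial_{ir}$ denote $\frac{1}{a}b_{i,-r}$ acting on polynomials. With these variables the relations \eqref{hr} become the Weyl-algebra relations. The PBW theorem identifies $M^\prime_{\zeta,a}$ with the polynomial ring $\C[b_{ir}: (i,r)\in\dot I\times\Z_{<0}]\,v$. On this space $b_{is}$ with $s>0$ acts as $a\,\partial/\partial b_{i,-s}+\zeta(b_{is})$.

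Irreducibility then follows by a degree argument. Let $W$ be a nonzero submodule and choose $0\neq w\in W$ of minimal total degree. If $\deg w>0$, some variable $b_{i,-s}$ occurs in $w$, and $(b_{is}-\zeta(b_{is}))w=a\,\partial w/\partial b_{i,-s}$ is a nonzero element of $W$ of strictly smaller degree, contradicting minimality. Hence $w$ is a nonzero multiple of $v$, and so $W=M^\prime_{\zeta,a}$. Here $a\neq 0$ is essential.

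The second assertion of (1), that all Whittaker $H^\prime$-modules have this form, uses the universal property. If $N$ is a Whittaker module of type $\zeta$ with $c$ acting by $a$ and $w\in Wh_\zeta(N)$, there is a surjection $M^\prime_{\zeta,a}\to\U(H^\prime)w$. Since $M^\prime_{\zeta,a}$ is simple, this surjection is an isomorphism. (For the irreducible case one takes $N$ cyclic on a Whittaker vector.)

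For (2), the module $M_{\zeta,a}$ as a vector space is $\C[d]\otimes\C[b_{ir}:r<0]v$, and we have $b_{is}d^k=(d-s)^k b_{is}$. Let $W$ be a nonzero submodule and take $w=\sum_{k=0}^N d^k f_k$ in $W$ with $f_N\neq0$, choosing first $N$ minimal and then $\deg f_N$ minimal.

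\emph{Step (a).} Apply $b_{is}-\zeta(b_{is})$ with $s>0$. This gives
\[\sum_k (d-s)^k\bigl(a\,\partial_{i,-s}f_k+\zeta(b_{is})f_k\bigr)-\zeta(b_{is})\sum_k d^k f_k .\]
Its top $d$-coefficient is $a\,\partial_{i,-s}f_N$. By minimality of $\deg f_N$ (when this is nonzero), every $f_N$ must be constant. Then the $d^{N-1}$ coefficient equals $a\,\partial f_{N-1}-sN\zeta(b_{is})f_N$.

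\emph{Step (b), the main obstacle.} We must use infinite support to lower $N$. Since $f_{N-1}$ is a polynomial in finitely many variables, we can choose $s$ with $\zeta(b_{is})\neq0$ and $b_{i,-s}$ not occurring in $f_{N-1}$ (or in any $f_k$). For such $s$ the element $(b_{is}-\zeta(b_{is}))w$ lies in $W$, is nonzero, and has $d$-degree at most $N-1$. If $N>0$ this contradicts the minimality of $N$, so $N=0$.

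With $N=0$ we have $w=f_0$, and the argument of (1) reduces $w$ to a nonzero multiple of $v$. Then $W\supseteq\U(H)v=M_{\zeta,a}$. The point to check carefully in Step (b) is that the leading term $-sN\zeta(b_{is})f_N$ really survives, i.e. that no cancellation occurs; this holds because the derivative contributions vanish in that variable.
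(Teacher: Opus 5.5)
Your proof is correct. Note that the paper gives no proof of this proposition at all; it is imported from \cite{C08} (Propositions 6 and 25). What you have written is therefore a self-contained replacement rather than a variant of an argument in the text.

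You realize $M^\prime_{\zeta,a}$ as $\C[b_{ir}: r<0]\,v$, with $b_{is}-\zeta(b_{is})$ acting as $a\,\partial/\partial b_{i,-s}$. You then induct on the $d$-degree in $M_{\zeta,a}\cong \C[d]\otimes\C[b_{ir}:r<0]\,v$ using $b_{is}d^k=(d-s)^kb_{is}$. This has the merit of isolating exactly where infinite support is used. In Step (b) you need an index $(i,s)$ with $\zeta(b_{is})\neq 0$ whose variable $b_{i,-s}$ occurs in none of the $f_k$. For finite support this is precisely what fails on the Whittaker vector $d.w_1+\sum \frac{r\zeta(b_{ir})}{a}b_{i,-r}.w_1$ of Lemma \ref{fs}, so your argument is consistent with that lemma.

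Two small remarks.
\begin{enumerate}
\item Step (a) is unnecessary. Once $b_{i,-s}$ is absent from every $f_k$, the $d^N$-coefficient $a\,\partial_{i,-s}f_N$ vanishes automatically. The $d^{N-1}$-coefficient is then $-Ns\,\zeta(b_{is})f_N\neq 0$, so minimality of $N$ alone forces $N=0$.
\item In (1), the hypothesis $\zeta\neq 0$ plays no role; only $a\neq 0$ matters. The second assertion is true only in the form you actually prove: for Whittaker modules that are cyclic on a Whittaker vector on which $c$ acts by $a$. In particular it covers simple ones, on which $c$ acts by a scalar by Dixmier's lemma, provided that scalar is nonzero. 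With the paper's definition of a Whittaker module as one generated by $Wh_\zeta$, a general such module is only a direct sum of copies of $M^\prime_{\zeta,a}$. If $c$ does not act by a scalar, it need not be of this form at all.
\end{enumerate}
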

We will see now that the infinite support of the Whittaker function is essential for $M_{\zeta, a}$ to be irreducible.

\begin{lemma}\label{fs}
   If $a\in \C^\times$ and $\text{supp}(\zeta)<\infty$  then  $M_{\zeta,a}$ is reducible with an infinite composition series. 
\end{lemma}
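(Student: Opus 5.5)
Since $\text{supp}(\zeta)$ is finite, there is an $N\in\Z_{>0}$ with $\zeta(b_{ir})=0$ for all $i\in\dot I$ and all $r>N$. The idea is to exploit the fact that $d$ no longer separates the degrees of the Whittaker vector. As a vector space,
\[
M_{\zeta,a}\cong \C[b_{ir}:(i,r)\in\dot I\times\Z_{<0}]\otimes\C[d]\, v .
\]
Put $\mathfrak a_N=\operatorname{span}\{b_{ir}: 1\le r\le N\}\oplus\operatorname{span}\{b_{i,-r}:1\le r\le N\}\oplus\C c$. This is a finite-dimensional Heisenberg algebra, and $H'=\mathfrak a_N\oplus\mathfrak b_N$, where $\mathfrak b_N$ is spanned by the $b_{ir}$ with $|r|>N$ and commutes with $\mathfrak a_N$. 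The vector $v$ is annihilated by $b_{ir}$ for $r>N$. So $\C[b_{ir}:r<-N]v$ is a Fock module for $\mathfrak b_N$ with vacuum $v$. The operator $d$, however, shifts the vacuum: $[b_{ir},d]=-r b_{ir}$, so
\[
b_{ir}\,d\,v=(d-r)\,b_{ir}v=-r\zeta(b_{ir})v+\zeta(b_{ir})\,d v .
\]
For $r>N$ this is zero.

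I would construct the submodules concretely using the element
\[
d'=d-\sum_{i}\sum_{r=1}^{N} r\,\zeta(b_{ir})\,b_{i,-r}/a .
\]
This is the correction that makes $b_{ir}d'v=0$ for $1\le r\le N$ (check: $b_{ir}b_{i,-r}v=a v+\ldots$, so the correction cancels the $-r\zeta(b_{ir})v$ term). It also holds for $r>N$, since both terms vanish there. Then $u_k:=(d')^k v$ satisfies $b_{ir}u_k\in\operatorname{span}(u_0,\dots,u_{k-1})$, modulo lower terms computed from $[b_{ir},d']=-rb_{ir}+(\text{scalar})$. The main computation is to show that $b_{ir}u_k$ is a linear combination of $u_j$ with $j<k$, with coefficients polynomial in $\zeta(b_{ir})$.

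Since $d'$ acts freely, I would define $W_k:=\mathcal U(H)(d')^k v$ for $k\ge0$. Then $W_0=M_{\zeta,a}\supseteq W_1\supseteq W_2\supseteq\cdots$. Each $W_k$ is a submodule by definition, and what remains is to prove the inclusions are strict. A PBW argument using the basis $\{b^{\mathbf p}(d')^j v\}$, with $b^{\mathbf p}$ running over monomials in negative modes, should show that $W_k$ is contained in the span of the elements with $j\ge k$ plus controlled corrections. I would make this precise via a filtration by $d'$-degree together with a \emph{total-degree} grading: assign $d'$ weight $0$ and $b_{i,-r}$ weight $r$, and look at leading terms. Then $v\notin W_1$, and more generally $(d')^k v\notin W_{k+1}$.

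The main obstacle is this strictness check. The positive modes $b_{ir}$ can lower the $d'$-degree when they are applied to elements of $W_k$, and I must verify they never produce $(d')^{k-1}v$ components. I would first try the following. Since $[b_{ir},d']=-rb_{ir}$ plus a scalar multiple of $c$ only when $r\le N$, moving all positive modes to the right and then applying them to $v$ gives scalars. But $d'$ commutes with the positive modes up to a positive mode. This should make $\mathcal U(H)(d')^kv=\mathcal U(H_{<0})\C[d'](d')^kv$. If this works, each quotient $W_k/W_{k+1}\cong\mathcal U(H(-)')\,\bar v$ is a module on which $H'$ acts through a Whittaker-type Fock module, and in this quotient $d'$ acts semisimply. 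The chain therefore never terminates, which gives the infinite composition series.

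If the commutation instead produces scalar terms $-r\zeta(b_{ir})$, I would absorb them by replacing $d'$ with $d'-\kappa$ for a suitable scalar $\kappa$ (indeed $\kappa=0$ after the correction above). Finally, I would check that the quotients $W_k/W_{k+1}$ are the modules $K_{\zeta,\lambda,b}$ from the later section, which have semisimple $d$-action.
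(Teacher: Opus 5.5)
Your strategy is the paper's: correct $d$ by a finite combination of negative modes (this is where finite support is used), then take $W_k=\mathcal U(H)(d')^kv$. As written, however, the construction fails at the decisive point, because the sign of your correction is wrong.

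\textbf{The sign error.} Put $X=\sum_i\sum_{r=1}^N \frac{r\zeta(b_{ir})}{a}\,b_{i,-r}$. On $M_{\zeta,a}$ one has $[b_{ir},X]=r\zeta(b_{ir})$. For your $d'=d-X$ this gives $[b_{ir},d']=-r\bigl(b_{ir}-\zeta(b_{ir})\bigr)-2r\zeta(b_{ir})$, hence $(b_{ir}-\zeta(b_{ir}))\,d'v=-2r\zeta(b_{ir})\,v$. So your ``check'' doubles the error term instead of cancelling it. As soon as $\zeta\neq 0$ this puts $v$ in $W_1$, so $W_1=W_0$ and the chain collapses.

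Your claim that $b_{ir}u_k$ has components along $u_0,\dots,u_{k-1}$ is exactly this failure. Those lower components would lie in $W_k$, so the proposal contradicts itself at the step you call the main obstacle. Replacing $d'$ by $d'-\kappa$ cannot help: a scalar shift leaves $[b_{ir},d']$ unchanged, while the defect $-2r\zeta(b_{ir})$ depends on $(i,r)$.

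\textbf{The missing idea.} Take $d'=d+X$; this is the paper's $w_{k+1}=d.w_k+\sum\frac{r\zeta(b_{ir})}{a}b_{i,-r}.w_k$. Then $[b_{ir},d']=-r\bigl(b_{ir}-\zeta(b_{ir})\bigr)$, so $(b_{ir}-\zeta(b_{ir}))(d')^kv=(d'-r)^k(b_{ir}-\zeta(b_{ir}))v=0$. Every $(d')^kv$ is therefore an honest Whittaker vector, not merely Whittaker modulo lower terms.

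\textbf{Strictness is then immediate.} Since $H(+)\oplus H(0)$ acts on $(d')^kv$ by scalars, $W_k=\mathcal U(H(-))(d')^kv$. Using the PBW basis attached to the basis $\{b_{i,-r}\}\cup\{d'\}$ of $H(-)$, this equals $\operatorname{span}\{b^{\mathbf p}(d')^jv: j\ge k\}$. No filtration argument is needed.

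\textbf{Simplicity of the factors.} One gets $W_k/W_{k+1}\cong M'_{\zeta,a}$ as $H'$-modules, which is irreducible because $a\neq 0$. It is this simplicity, not the length of the chain, that makes the chain a composition series, and you never argue it.

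\textbf{The factors are not the $K_{\zeta,\lambda,b}$.} In $W_k/W_{k+1}$ it is $d'$ that acts semisimply, by minus the degree. By contrast $d\bar u_k=-X\bar u_k$ is not a multiple of $\bar u_k$ when $\zeta\neq 0$. Comparing top-degree components shows $d$ has no eigenvectors on the quotient at all. The factors are the paper's $N_{\zeta,a}$, whose interest is precisely that the derivation does not act semisimply on them.
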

\begin{proof}
   Let $w_k$ be any nonzero Whittaker vector of $M_{\zeta,a}$. Then it can be shown that \[w_{k+1}=d.w_{k}+\sum_{(i,r)\in \dot{I}\times \Z_{>0}}\frac{r\zeta(b_{i,r})}{a}b_{i,-r}.w_{k}\] is also a Whittaker vector.
   
   Now setting $w_1=1\otimes 1\in M_{\zeta,a}$ and  $M_k=\mathcal{U}(H).w_k$ we obtain an infinite chain of submodules 
   $$M_{\zeta,a}=M_0\supsetneq M_1\supsetneq M_2\supsetneq \cdots M_k\supsetneq M_{k+1}\supsetneq\cdots$$   with $M_{k}/M_{k+1}\cong M^\prime_{\zeta,a}$. Then all $M_{k}/M_{k+1}$ are irreducible as $H$-modules. 
   \end{proof}
   Denote by $\bar{w}_k$  the image of $w_k$ in $M_k/M_{k+1}$. Then the action of  the derivation is defined as follows:
   \begin{equation}\label{dera}
       d.\bar{w}_k=-\sum_{(i,r)\in \dot{I}\times \Z_{>0}}\frac{r\zeta(b_{i,r})}{a}b_{i,-r}.\bar{w}_{k}.
   \end{equation}
   
   \begin{remark}
   \begin{enumerate}
   \item We denote $M_{\zeta, a}/M_1$ by $N_{\zeta, a}$, when $\text{supp}(\zeta)<\infty$.
   
   \item The $H^\prime$-module $M^\prime_{\zeta, a}$ can be viewed as an $H$-module by setting $d.v=bv$ for some $b\in \C$ and extending the derivation action to the whole module. We denote these modules by $K_{\zeta,a, b}$.
   \end{enumerate}

   \end{remark}
 \section{Imaginary Whittaker modules}
 \subsection{Imaginary Whittaker function:}
 Every closed partition of the root system of $\ag$ yields a triangular decoposition $\ag=\ag_-\oplus \ah\oplus \ag_+$. If we choose the natural triangular decomposition of $\ag$ corresponding to $S_{nat}$, then for any Lie algebra homomorphism ${\zeta}:\ag_+\rightarrow \C$, all real root vectors belong to the kernel of this function.

 \subsection{A class of imaginary Whittaker modules:}\label{ciwm} 

 



 
  Now we consider three different classes of irreducible $H$-modules: $M_{\zeta,a}$ (when support of $\zeta$ is infinite), $N_{\zeta,a}$ (when support of $\zeta$ is finite) and $K_{\zeta,a,b}$. Also 
  given any $\lambda\in \h^*$, we assume that $\h$ acts on these spaces by the functional $\lambda$. Given the functional $\lambda\in \h$, we can extend it to $\h\oplus \C c$ by denoting by the same such that $\lambda(c)=a$ and therefore we denote the above three spaces by $M_{\zeta, \lambda,}, N_{\zeta, \lambda},\; K_{\zeta, \lambda,b}$ when they are considered as $H\oplus \h$ modules. In a similar fashion we extend the $H^\prime$-module $M^\prime_{\zeta,a}$ to $H^\prime\oplus \h$ module by the scalar action of $\h$ given by $\lambda$ and denote it by $M^\prime_{\zeta, \lambda}$.

 Let $\P=H\oplus \h\oplus (\g_+\otimes \C[t^{\pm1}])$ (resp. $\widetilde{\P}=H^\prime\oplus \h\oplus (\g_+\otimes \C[t^{\pm1}])$) be a parabolic subalgebra of $\ag$ (resp. $\tilde{\g}$). Given any  $H\oplus \h$-module (resp. $H^\prime\oplus \h$-module), we  make it into a $\P$-module (resp. $\widetilde{\P}$-module) with the trivial action of $ \g_+\otimes\C[t^{\pm 1}]$.
 
 Now given any $\P$ module (resp. $\widetilde{\P}$ module)  $V$ (resp. $U$), we get $\ag$ module (resp. $\tilde{g}$-module) via parabolic induction 
 \[Ind_{\P}^{\ag}V:=\U(\ag)\otimes _{\U(\P)}V, \quad (\text{resp.} \;Ind_{\widetilde{\P}}^{\tilde{g}} U:=\U(\tilde{g})\otimes_{\U(\widetilde{\P})} U),\]
 and we denote it by $\widehat{V}$ (resp. $\widetilde{U}$).

   \textbf{Notation:} For ${\bf p}=(p^1,\dots ,p^m)\in (\N^\Z_{\bf f})^m$, we set $Y^{\bf p}=Y_{\beta_1}^{p_1 }\cdots Y_{\beta_m}^{p_m}$.

 \begin{theorem}\label{classical}
The modules $\widehat{M}_{\zeta, \lambda}$ (when support of $\zeta$ is infinite), $\widehat{N}_{\zeta,\lambda}$ (when support of $\zeta$ is finite) and $\widehat{K}_{\zeta,\lambda,b}$  are irreducible for $\ag$.
  \end{theorem}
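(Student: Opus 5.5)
The plan is the standard one for parabolically induced modules: every nonzero submodule of $\widehat{V}$ (with $V$ one of $M_{\zeta,\lambda}$, $N_{\zeta,\lambda}$, $K_{\zeta,\lambda,b}$) meets $1\otimes V$ nontrivially, and the irreducibility of $V$ as an $H\oplus\h$-module then forces the submodule to be everything. By the PBW theorem and the decomposition $\ag=(\g_-\otimes\C[t^{\pm1}])\oplus\P$, we have $\widehat{V}\cong \U(\g_-\otimes\C[t^{\pm1}])\otimes V$ as vector spaces. By the lemma on $\U(\g_-\otimes\C[t^{\pm1}])$, every element can be written uniquely as
\[
w=\sum_{\mathbf p} Y^{\mathbf p}\otimes v_{\mathbf p},\qquad v_{\mathbf p}\in V .
\]
Since $\h$ acts on $V$ by the scalar $\lambda$, $\widehat{V}$ is an $\h$-weight module with weights in $\lambda-\dot Q_+$, and any submodule $W$ is a sum of its weight components. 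So we may take $0\neq w\in W$ homogeneous of some weight $\lambda-\eta$. Among such vectors we choose $w$ with $\eta$ minimal, and then with minimal total height of the leading monomials. If $\eta=0$ we are done, since then $w\in 1\otimes V$.

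If $\eta\neq 0$, the goal is to produce a nonzero element of $W$ of strictly smaller weight or height, using the operators $X_{\beta_i}(r)$ with $r$ large. Concretely:

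\begin{enumerate}
\item Choose the smallest index $i$ (in the fixed order $\beta_1<\dots<\beta_m$) for which some $p_i^{\mathbf p}\neq 0$ among the terms of $w$.
\item Take $r\gg 0$, exceeding $-\mathrm{supp}$ of every $p_j$ appearing and larger than every degree occurring in the finitely many $v_{\mathbf p}$ (measured with respect to the $\Z$-grading of $\U(H(-))$).
\item Apply Proposition \ref{Main Identity}(2) to $X_{\beta_i}(r)\,w$. The term $Y^{\mathbf p}X_{\beta_i}(r)$ kills $v_{\mathbf p}$. The terms of the form $\frac{\partial Y^{p_i}_{\beta_i}}{\partial Y_{\beta_i}(s)}\cdots h_{\beta_i}(r+s)$ contribute $h_{\beta_i}(r+s)v_{\mathbf p}$.
\item For $r$ large, $h_{\beta_i}(r+s)$ is a combination of the $b_{j,r+s}$ with $r+s>0$. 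It acts on $v_{\mathbf p}$ as the scalar $\zeta$ plus lowering terms obtained by commuting with the $b_{j,-k}$ appearing in $v_{\mathbf p}$, which contribute multiples of $a$. For the $M$ case one must also account for powers of $d$, which produce extra terms of the form $(r+s)\,b_{j,r+s}$.
\end{enumerate}

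The remaining terms have either strictly lower weight than $\lambda-\eta$ in the part not involving $h$, or involve real root vectors $Y_{\beta_i}(s_1+s_2+r)$ of the same weight but lower height. One has to check, by a leading term comparison, that the whole expression is nonzero.

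The key step is to extract a genuinely nonzero element. The approach is to exploit the freedom in $r$ and use the infinite support of $\zeta$, or, in the $N$ and $K$ cases, the structure of $V$:
\begin{itemize}
\item Case $M$ (infinite support): choose $r$ with $\zeta(h_{\beta_i}(r+s))\neq 0$ for the relevant $s$, so that the term $\frac{\partial Y^{p_i}_{\beta_i}}{\partial Y_{\beta_i}(s)}\otimes\zeta(\cdot)v_{\mathbf p}$ survives. The main care here is that different $s\in\mathrm{supp}(p_i)$ give different functionals, and cancellations must be avoided. I would take $s_0=\max\mathrm{supp}(p_i)$ over the leading terms and argue via a Vandermonde-type independence in the varying parameter $r$.
\item Cases $N$ and $K$ ($\zeta$ possibly zero for large degree): replace the scalar by the contraction term $a\,\partial/\partial b_{j,-(r+s)}$. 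This means choosing $r$ so that $r+s$ matches a degree present in $v_{\mathbf p}$, or, if $v_{\mathbf p}$ is a multiple of the generator, first using the $H$-irreducibility of $V$ to replace $w$ by $u\cdot w$ for suitable $u\in\U(H(-))$ of large degree. Since $H$ commutes with $\g_-\otimes\C[t^{\pm 1}]$ only up to real root vectors, this replacement keeps the weight fixed and does not increase the height.
\end{itemize}

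This is exactly where the discrepancy of \cite{C08} (eq.\ 6.12) lies, so I expect it to be the main obstacle. The central difficulty is showing that for general (non-monomial) $w$ the $h$-contributions from different $\mathbf p$ do not cancel. I would handle it by a lexicographic leading term argument on $(p_i,\dots,p_m)$, combined with letting $r$ range over infinitely many values.

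Once we have a nonzero element of smaller weight or height, minimality gives a contradiction. Hence $W\cap(1\otimes V)\neq 0$, and since $W\cap (1\otimes V)$ is an $H\oplus\h$-submodule of the irreducible module $V$, it equals $1\otimes V$. Therefore $W\supseteq\U(\ag)(1\otimes V)=\widehat{V}$, which proves the theorem.
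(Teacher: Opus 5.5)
Your outer framework is right: reduce to $W\cap(1\otimes V)\neq 0$ by raising the weight with $X_{\beta_i}(r)$ via Proposition~\ref{Main Identity}, then use irreducibility of $V$. But the step that carries the proof, namely that $X_{\beta_i}(r)w\neq 0$ for a suitable $r$, is not established, and the direction you choose for $r$ cannot work. With $r\gg 0$ the operators $h_{\beta_i}(r+s)$ lie in $H(+)$ and act on $v_{\mathbf p}$ through the values of $\zeta$. So the $h$-contributions of different pairs $(\mathbf p,s)$ to the same PBW monomial can cancel identically in $r$, and no Vandermonde or leading-term argument rescues this.

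Concretely, take $\g=\mathfrak{sl}_2$ with triple $X,Y,h$, and choose $\zeta$ with $\zeta(h(k))=1$ for all $k>0$. This is possible since $H'(+)$ is abelian and $h(k)$ is a nonzero multiple of $b_{1,k}$; the support is infinite, so $M_{\zeta,\lambda}$ is irreducible. For $w=(Y(0)-Y(1))\otimes v$ one gets $X(r)w=\bigl(\zeta(h(r))-\zeta(h(r+1))\bigr)v=0$ for every $r\geq 1$. In the finite-support cases it is worse: $X(r)(Y(0)\otimes v)=0$ for all $r$ beyond the support. That is why you needed the patch with $u\in\U(H(-))$ and matching degrees, which you leave unexecuted and which brings back exactly the same cancellation problem.

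The missing idea, which is what the paper uses, is to take $r\ll 0$ instead. Write $V\cong\C[b_{j,k}:k<0]\otimes W'$, with $W'=\C[d]v$ for $M$ and $W'=\C v$ for $N$ and $K$. Choose $\nu$ with every $v_{\mathbf p}\in\C[b_{j,k}:k>-\nu]\otimes W'$, and pick $r$ with $r+s<-\nu$ for all $s\in\mathrm{supp}(p_i)$, $\mathbf p\in I_v(1)$. Then $h_{\beta_i}(r+s)\in H(-)$ acts on $v_{\mathbf p}$ as multiplication by fresh variables. These are algebraically independent of everything occurring in the $v_{\mathbf p}$, and distinct for distinct $s$.

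The $h$-part
\[
\sum_{\mathbf p\in I_v(1)}\sum_{s}\frac{\partial Y^{\mathbf p}}{\partial Y_{\beta_i}(s)}\otimes h_{\beta_i}(r+s)v_{\mathbf p}
\]
therefore lies in the component of degree one in the fresh variables and is visibly nonzero. All remaining terms have $V$-components in the span of the original $v_{\mathbf p}$, i.e.\ degree zero in the fresh variables. These are the terms with $Y_{\beta_i}(r+s_1+s_2)$, the starred terms, the commutators with $Y_{\beta_j}$ for $j>i$, and the terms with $p_i=0$; all are products of negative real root vectors. So no cancellation is possible. The argument is uniform for $M$, $N$, $K$ and independent of $\zeta$; the support of $\zeta$ matters only for the irreducibility of $V$, used at the end.

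A minor point: every term of $X_{\beta_i}(r)w$ has the same weight $\lambda-\eta+\beta_i$. So any nonzero result already contradicts minimality of $\eta$, and your secondary minimization on height is unnecessary.
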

 
 \begin{proof}
  Let $V$ be any one of $M_{\zeta,\lambda}$ (when support of $\zeta$ is infinite) or $N_{\zeta,\lambda}$ (when support of $\zeta$ is finite) or $K_{\zeta,\lambda, b}$.
  Let $L$ be any non-zero submodule of $\widehat{V}$. It is sufficient to show that $L\cap V\neq 0$. We have that $V\cong \C[h_{\alpha_i}(r):r\in [-\infty, 0),\; 1\leq i\leq n]\otimes W$ for some subspace $W$ of $V$. Since $ \widehat{V}$ is an $\h$-weight $\ag$-module, the same holds for $L$. We will proceed by induction on the height of weight elements of $L$, where height of a weight element is determined by the  height of its weight. Let $v\in L$ be a non-zero weight element of height $p(>1)\in\N$. We will show that there exists a non-zero weight element in $L$ of height strictly smaller than $p$. Write $v$ in the form 

  \begin{equation*}
   v=\sum_{{\bf p}\in I_v}Y_{\beta_1}^{p_1}Y_{\beta_2}^{p_2}\cdots Y_{\beta_m}^{p_m}\otimes v_{\bf p},   
  \end{equation*}
   where $I_v:=\{{\bf p}^1,\cdots {\bf p}^k\}$ is a finite subset of $(\N^\Z_{\bf{f}})^m$ with ${\bf p}^j=(p_1^j,\cdots ,p_m^j)$ with $1\leq j\leq k$. 
  Note that for each $1\leq j\leq k$,  there exists $\nu_j\in \N$ such that $v_{{\bf p}^j}\in \C[h_{\alpha_i}(r): r>-\nu_j]\otimes W$.
Assume $i$ to be the minimal index such that $p_i\neq 0$ for some $(0,\dots ,p_i,\dots p_m)\in I_v$.  Now, let us partition $I_v$ into disjoint unions of two sets $I_v(1)$ and $I_v(2)$, where $I_v(1)$ (resp. $I_v(2)$)
is the collection of all elements ${\bf p}\in I_{v}$ having the i-th coordinate  $p_i\neq 0$ (resp. $p_i= 0$).
 Now we choose $r\in \Z_{<0}$ such that  $r+s< -\text{max}\{\nu_j:1\leq j\leq k\}$ for all $s\in \text{supp}(p_i^j)$ and all ${\bf p}^j\in I_v(1)$.
 \\
 
 Then we have 

 \begin{multline*}
  X_{\beta_i}(r). v:=\sum_{\substack{{s_1\in \text{supp({\bf p})}}\\ {{\bf p}\in I_v(1) }}} \frac{\partial Y^{\bf p}}{\partial Y_{\beta_i}(s_1)}h_{\beta_i}(r+s_1)\otimes v_{\bf p}-
  \sum_{\substack{s_1\in \text{supp}({\bf p})\\{\bf p} \in I_v(1) }} \sum_{t=i+1}^m  \frac{\partial Y_{\beta_i}^{p_i}}{\partial Y_{\beta_i}(s)} (Y_{\beta_{i+1}}^{p_{i+1}})^*_t\cdots (Y_{\beta_m}^{p_m})^*_t \otimes v_{\bf{p}} \\
  -\frac{\beta_i(h_{\beta_i})}{2}\sum_{\substack{s_1,s_2\in \text{supp}(p_i)\\ {\bf p} \in I_v(1)}} \frac{\partial^2 ( Y_{\beta_i}^{p_i})}{\partial (Y_{\beta_i}(s_1))\partial (Y_{\beta_i}(s_2))}Y_{\beta_i}(r+s_1+s_2) Y_{\beta_{i+1}}^{p_{i+1}}\cdots Y_{\beta_m}^{p_m}\otimes v_{{\bf p}}+
  \\
 \sum_{{\bf p} \in I_v(1)} Y_{\beta_i}^{p_i}[X_{\beta_i}(r), Y_{\beta_{i+1}}^{p_{i+1}}\cdots Y_{\beta_m}^{p_m}]\otimes v_{{\bf p}}+ \sum_{{\bf p}\in I_v(2) }X_{\beta_i}(r)Y^{\bf p}\otimes v_{\bf p}
  \in K.
  \end{multline*}
  It is easy to see that $X_{\beta_i}(r).v$ is an element of height at most $ht (v)-ht(\beta_i)$. Suppose $X_{\beta_i}(r).v=0$. Then 
  $$ \sum_{\substack{{s_1\in \text{supp({\bf p})}}\\ {{\bf p}\in I_v(1) }}} \frac{\partial Y^{\bf p}}{\partial Y_{\beta_i}(s_1)}h_{\beta_i}(r+s_1)\otimes v_{\bf p}=0,$$
   which is a contradiction. 
  \end{proof}
  
  Note that  Theorem \ref{classical} implies in particular \cite[Theorem $50$]{C08}, if we choose $V=M_{\zeta,\lambda}$ with infinite support of $\zeta$, which is again a particular case of \cite{CF23}.

\begin{remark}
Note that for $V=K_{\zeta,\lambda,b}$ the imaginary Whittaker module $\widehat{V}$ is  weight.
\end{remark}

  
\begin{theorem}\label{da} 
     $ \widetilde{M}^\prime_{\zeta,\lambda}$ is irreducible for $\tilde{\g}$.
\end{theorem}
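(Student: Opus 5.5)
The plan is to follow the proof of Theorem \ref{classical}, with $\ag$ replaced by $\tilde{\g}$, $\P$ by $\widetilde{\P}$, and $V$ by $M^\prime_{\zeta,\lambda}$. By the PBW theorem,
\[\widetilde{M}^\prime_{\zeta,\lambda}\cong \U(\g_-\otimes\C[t^{\pm1}])\otimes M^\prime_{\zeta,\lambda}\]
as vector spaces. We need $\zeta\neq 0$ and $a=\lambda(c)\neq 0$; then $M^\prime_{\zeta,\lambda}$ is irreducible over $H^\prime\oplus\h$ by the irreducibility proposition for $M^\prime_{\zeta,a}$ in Section 4, since $\h$ acts by the scalar $\lambda$. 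It therefore suffices to show that every nonzero $\tilde{\g}$-submodule $L$ meets $1\otimes M^\prime_{\zeta,\lambda}$ nontrivially. Then $L\cap M^\prime_{\zeta,\lambda}$ is a nonzero $H^\prime\oplus\h$-submodule, hence all of $M^\prime_{\zeta,\lambda}$, which generates the induced module.

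The module is an $\h$-weight module with weights in $\lambda-\dot{Q}_+$, so $L$ is spanned by weight vectors. We induct on the height of $\lambda-\mathrm{wt}$. Take a nonzero weight vector
\[v=\sum_{{\bf p}\in I_v}Y^{\bf p}\otimes v_{\bf p}\in L\]
of positive height, with $v_{\bf p}\in M^\prime_{\zeta,\lambda}\cong\C[b_{j,s}:s<0]$. Let $i$ be the minimal index with $p_i\neq 0$ for some ${\bf p}\in I_v$, and split $I_v=I_v(1)\sqcup I_v(2)$ as in the proof of Theorem \ref{classical}. Choose $r\ll 0$ so that:
\begin{itemize}
\item $r\notin-\mathrm{supp}(p_j)$ for all $j$, so Proposition \ref{Main Identity} applies;
\item $r+s<-\nu$ for all $s\in\mathrm{supp}(p_i)$, where all $v_{\bf p}$ are polynomials in the $b_{j,s}$ with $s>-\nu$.
\end{itemize}
Apply $X_{\beta_i}(r)\in\tilde{\g}$ to $v$ and expand with Proposition \ref{Main Identity}(2). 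Terms with $X_{\beta_i}(r)$ moved to the right act on $v_{\bf p}$ by zero, since $\g_+\otimes\C[t^{\pm1}]$ annihilates $M^\prime_{\zeta,\lambda}$. All remaining terms have strictly smaller height.

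Next we show $X_{\beta_i}(r)v\neq 0$. Isolate the terms
\[\sum_{{\bf p}\in I_v(1)}\sum_s\frac{\partial Y^{\bf p}}{\partial Y_{\beta_i}(s)}\otimes h_{\beta_i}(r+s)v_{\bf p}.\]
Because $r+s<0$ and $r+s<-\nu$, the element $h_{\beta_i}(r+s)$ is a nonzero combination of the creation operators $b_{j,r+s}$. These are new polynomial variables that do not occur in any $v_{\bf p}$, and they act by multiplication. So these terms lie in the part of degree $1$ in the new variables $\{b_{j,r+s}\}$, which is linearly independent in the PBW basis of $\U(\g_-\otimes\C[t^{\pm1}])\otimes\C[b]$.

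The other terms cannot cancel this part:
\begin{itemize}
\item The second-derivative terms, the $(\cdot)^*_t$ terms, and the commutator terms $Y^{p_i}_{\beta_i}[X_{\beta_i}(r),\cdots]$ contain no imaginary root vectors (see the Remark after Proposition \ref{Main Identity}). Their $M^\prime$-components are still the original $v_{\bf p}$, which do not involve the new variables.
\item The $I_v(2)$ terms come from monomials in $Y_{\beta_j}$ with $j>i$. Commuting $X_{\beta_i}(r)$ past them produces only real root vectors or imaginary elements $h_{\beta_i}(\cdot)$ of index not of the required form, again by the Remark.
\end{itemize}
Hence the coefficient of the new variables is
\[\sum\frac{\partial Y^{\bf p}}{\partial Y_{\beta_i}(s)}\otimes(\text{nonzero } b)\,v_{\bf p}\neq 0,\]
because the PBW monomials $\partial Y^{\bf p}/\partial Y_{\beta_i}(s)$ with distinct $(s,{\bf p})$ are independent up to positive integer scalars. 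This gives a nonzero weight vector of smaller height in $L$, and induction ends at height $0$, i.e.\ in $L\cap M^\prime_{\zeta,\lambda}$.

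The main obstacle is this last independence argument. Several pairs $({\bf p},s)$ can give the same monomial $\partial Y^{\bf p}/\partial Y_{\beta_i}(s)$ with different $r+s$, and the $I_v(2)$ terms must not contribute in degree one in the new variables. I would handle both by picking $r$ so negative that the indices $r+s$ are distinct for distinct $s$ and far below every index in $v$. Then I would compare coefficients of a single leading $b_{j,r+s_0}$, using the one with maximal $s_0$ in $\mathrm{supp}(p_i)$. Note that no derivation $d$ is present here: the argument never uses $d$, so it transfers verbatim from $\ag$ to $\tilde{\g}$.
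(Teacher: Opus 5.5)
Your proposal is correct and is essentially the paper's argument. The paper proves Theorem \ref{da} only by pointing to the height reduction of Theorem \ref{classical}: apply $X_{\beta_i}(r)$ with $r\ll 0$ and isolate the terms $\frac{\partial Y^{\bf p}}{\partial Y_{\beta_i}(s)}\otimes h_{\beta_i}(r+s)v_{\bf p}$. Your write-up supplies the nonvanishing step that the paper only asserts. One small correction: monomials coming from distinct pairs $(s,{\bf p})$ need not be independent, so the valid version is the one in your last paragraph. Fix one $s_0$ and one variable $b_{j,r+s_0}$ occurring in $h_{\beta_i}(r+s_0)$; then distinct ${\bf p}$ with $p_i(s_0)\geq 1$ give distinct PBW monomials $\partial Y^{\bf p}/\partial Y_{\beta_i}(s_0)$, and any $s_0$ works, with no maximality needed.
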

  The proof of Theorem \ref{da} follows a similar height-reduction argument to that of Theorem \ref{classical}. 
  
\section{Whittaker modules over Quantum Heisenberg Algebras}

 In this section we discuss Whittaker modules over quantum Heisenberg algebras. 
 Let  $\mathcal{H}_q^\prime$ be  the subalgebra of $\U_q(\ag)$ generated by  $h_{i,r}, \gamma^{\pm 1/2}:i\in \dot{I} , r\in \Z^\times$.  
 Following  \cite[Section 2.3]{BCFK22} or  \cite[Section 2]{FHW15}, we can choose  new set of generators $\{h_{i,r}^{\prime}:(i,r)\in \dot{I}\times \Z^\times\}$ of $\mathcal{H}_q^\prime$ such that  the following relations hold:
 \begin{equation}\label{qhr}
   [h_{ir}^\prime,h_{j,s}^\prime]=\delta_{ij}\delta_{r+s,0}\frac{\gamma^r-\gamma^{-r}}{q-q^{-1}}.  
 \end{equation}
 
 Define $\mathcal{H}_q^\prime({\pm})$ to be generated by $h_{i,\pm r}, \, (i,\pm r)\in \dot{I}\times \Z_{>0}$.  Let $\mathcal{H}_q$ be the subalgebra of $\qag$ generated by $\mathcal{H}_q^\prime$ and $D^{\pm1}$. Note that $Dh_{ir}^\prime D^{-1}=q^rh_{i,r}^\prime$. Set $\mathcal{H}_q(\pm)=\mathcal{H}^\prime_q(\pm)$ and let $\mathcal{H}_q(0)$ be the subalgebra generated by $\gamma^{\pm 1/2},D^{\pm 1}$. For the sake of simplicity we will denote $h^\prime_{ir}$ by $h_{i,r}$.

 \begin{definition}
   Assume $0\neq \zeta \in Hom_{\text{alg}}(\mathcal{H}_q^\prime(+), \C(q^{1/2}))$ and let $V$ be an $\mathcal{H}_q^\prime$-module. We say that a non-zero $v\in V$ is a Whittaker vector of type $\zeta$ if $y.v=\zeta(y)v$ for all $y\in \mathcal{H}_q^\prime(+)$. The module $V$ is called a Whittaker module of type $\zeta$ if there exists a  Whittaker vector $v$ of type $\zeta$ such that $V=\mathcal{H}_q^\prime.v$. Similarly one defines  Whittaker modules over $\mathcal{H}_q$.
 \end{definition}
 
 Now we will explicitly construct Whittaker modules over quantum Heisenberg algebra.
  Let $\mathcal{H}_q^\prime(\mathfrak{b})$ be generated by $\mathcal{H}_q^\prime(+)$ and $ \gamma^{\pm 1/2}$. It is easy to see that this is a maximal abelian subalgebra of $\mathcal{H}_q^\prime$. 
  For any $(\zeta,a) \in (Hom_{\text{alg}}(\mathcal{H}_q^\prime(+), \C(q^{1/2})), \C^\times)$,  we let $\C_{(\zeta,a)}(q^{1/2})v=\C(q^{1/2})v$ to be the one-dimensional $\mathcal{H}_q^\prime(\mathfrak{b})$-module with the action given by
  \begin{equation}\label{a}
       y.v=\zeta(y)v,\; \gamma^{1/2}.v=q^{a/2}.v
  \end{equation}
   for all $y\in \mathcal{H}_q^\prime(+)$.
Now we consider the induced $\mathcal{H}_q^\prime$-module 
\begin{equation}\label{b}
M_{\zeta,a}^{(\prime,q)}=\mathcal{H}_q^\prime\otimes_{\mathcal{H}_q^\prime(\mathfrak{b})}\C_{(\zeta,a)}(q^{1/2})v 
\end{equation} 

and the induced $\mathcal{H}_q$-module 

\begin{equation}\label{d}
M_{\zeta,a}^q =\mathcal{H}_q \otimes_{\mathcal{H}_q (\mathfrak{b})}\C_{(\zeta,a)}(q^{1/2})v.  
\end{equation} 
 Clearly $M_{\zeta,a}^q$ is Whittaker $\mathcal{H}_q$-module  and $M^{(\prime,q)}_{\zeta, a}$ is a canonical $\mathcal{H}_q^\prime$-submodule of  $M_{\zeta,a}^q$. 

\medskip

Fix a lexicographic order (say $'\leq'$) on $\N^{\dot{I}\times \Z_{>0}}_{\bf f}$. Given any $p\in \N^{\dot{I}\times \Z_{>0}}_{\bf f}$ with $p(i,r)=p_{ir}$ and  $a\in \C^\times$, we set the following:
  
\begin{enumerate}
    \item \[p!=\prod_{(i,r)\in \dot{I}\times \Z_{>0}}p_{ir}!,\]
    \item \[h_-^p=\prod_{(i,r)\in \dot{I}\times \Z_{>0}}h_{i,-r}^{p_{ir}},\]
    \item \[(h_+-\zeta)^p=\prod_{(i,r)\in \dot{I}\times \Z_{>0}}(h_{i,r}-\zeta(h_{ir}))^{p_{ir}},\]
    \item \[(h_--\eta)^p=\prod_{(i,r)\in \dot{I}\times \Z_{>0}}(h_{i,-r}-\eta(h_{i,-r}))^{p_{ir}},\]
    \item  \[[a]^p=\prod_{(i,r)\in \dot{I}\times \Z_{>0}}[ra]^{p_{ir}}, \]
\end{enumerate}
 for any $\eta\in Hom_{\text{alg}}(\mathcal{H}_q^\prime(-),\C(q^{1/2}))$.
 
 \medskip
To establish the irreducibility of the induced modules, we first investigate the action of the polynomial operators $(h_+-\zeta)^p$ on the basis elements $h_-^pv$. The following lemma shows that these operators act as detectors for the PBW basis, yielding a non-zero scalar when the partition matches and annihilating the vector when the operator's order exceeds the basis element in the lexicographic ordering.

\begin{lemma}\label{pf}
 The following equalities hold in $M^{(\prime,q)}_{\zeta,a}$:
 \begin{enumerate}
     \item If $a\in \C^\times$ then $(h_+-\zeta)^p h_-^pv=p![a]_q^p v$.
     \item If $a\in \C^\times$ and $p_1,p_2\in \N^{\dot{I}\times \Z_{>0}}_{\bf f} $ with $p_1<p_2$, then $(h_+-\zeta)^{p_2} h_-^{p_1}v=0$. 
     \item If $a=0$ then $h_{ir}h_-^pv=\zeta(h_{ir})h_-^pv$.     
 \end{enumerate}
 \begin{proof}
      
(1) We proceed by induction on the total degree of $p$.  
Applying the operator $(h_{i,r} - \zeta(h_{ir}))$ to the basis element $h_{i,-r}v$, and noting that $\gamma^r v = q^{ra}v$, we have:
\[ (h_{i,r} - \zeta(h_{ir})) h_{i,-r}v = ([h_{i,r}, h_{i,-r}] + h_{i,-r}h_{i,r} - \zeta(h_{ir})h_{i,-r})v. \]
Since $h_{i,r}v = \zeta(h_{ir})v$, the last two terms cancel, leaving $[ra]_q v$. Extending this to the product $h_-^p$ using equation \ref{qhr} and the fact that generators for different $(i, r)$ commute, we obtain the factor $p! [a]_q^p v$.

(2) Let $p_1 < p_2$ in the lexicographic order. There exists a pair $(i, r)$ such that the exponent of the operator $(h_{i,r} - \zeta(h_{ir}))$ in $(h_+ - \zeta)^{p_2}$ is strictly greater than the exponent of $h_{i,-r}$ in the basis element $h_-^{p_1}v$, while all higher-order terms match. When we commute the positive generators through, the "surplus" of operators $(h_{i,r} - \zeta(h_{ir}))$ eventually acts directly on the Whittaker vector $v$. By the definition of a Whittaker vector, $(h_{ir} - \zeta(h_{ir}))v = 0$, thus annihilating the entire expression.

(3) If $a = 0$, the central charge $\gamma$ acts as the identity since $q^0 = 1$. The commutator in \ref{qhr} vanishes because $[r \cdot 0]_q = 0$. Consequently, the algebra $\mathcal{H}_q^\prime$ becomes commutative. The positive generators $h_{ir}$ can be moved past the negative generators $h_-^p$ to act directly on $v$, yielding the eigenvalue $\zeta(h_{ir})$.
\end{proof}
  
\end{lemma}

  \begin{proposition}\label{ired1}
     Let $a\in \C^\times$. Then $M^{(\prime,q)} _{\zeta,a}$ is the unique (up to an isomorphism) irreducible Whittaker $\mathcal{H}_q^{\prime}$-module  of type $(\zeta,a)$, where $\gamma$ acts by $q^a$. 
  \end{proposition}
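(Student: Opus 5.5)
The proof has two parts: irreducibility, which comes from Lemma \ref{pf}, and uniqueness, which comes from the universal property of the induced module.

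\emph{Structure of the module.} By a PBW-type argument for the Heisenberg-type algebra defined by relations \eqref{qhr}, $M^{(\prime,q)}_{\zeta,a}$ is a free module with basis $\{h_-^p v : p\in \N^{\dot{I}\times\Z_{>0}}_{\bf f}\}$. In particular, $\mathcal{H}_q^\prime(-)$ acts freely on $v$.

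\emph{Irreducibility.} Let $L\neq 0$ be a submodule and choose $0\neq w\in L$. Write
\[
w=\sum_{p\in S} c_p\, h_-^p v
\]
with $S$ finite and $c_p\neq 0$. Let $p_0$ be the maximal element of $S$ in the fixed lexicographic order. Apply $(h_+-\zeta)^{p_0}$ to $w$:
\begin{itemize}
\item every term with $p<p_0$ is killed by Lemma \ref{pf}(2);
\item the top term gives $c_{p_0}\,p_0!\,[a]_q^{p_0}\, v$ by Lemma \ref{pf}(1).
\end{itemize}
Since $a\neq 0$ and $q$ is an indeterminate, each factor $[ra]_q$ is nonzero in $\C(q^{1/2})$. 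Hence $v\in L$, and so $L=M^{(\prime,q)}_{\zeta,a}$.

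\emph{Main obstacle.} The delicate step is that Lemma \ref{pf}(2) is only stated for operator orders exceeding the basis exponent in the lex order. I must check that $p_0$ dominates every other $p\in S$ in the required sense. This means confirming that the lexicographic comparison used in Lemma \ref{pf}(2) is the same one that singles out $p_0$, so that every other term is annihilated. If the lemma is too weak as stated, I would argue componentwise instead. Each $h_{i,r}-\zeta(h_{ir})$ acts on the basis as $[ra]_q\,\partial/\partial h_{i,-r}$, because the variables for different $(i,r)$ commute. Then $(h_+-\zeta)^{p_0}$ is a constant-coefficient differential operator. It sends $h_-^{p}v$ to $0$ unless $p\ge p_0$ componentwise, and sends $h_-^{p_0}v$ to a nonzero multiple of $v$. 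Choosing $p_0$ maximal (for instance of maximal total degree, then lex-maximal) ensures that only the top term survives.

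\emph{Uniqueness.} Let $V$ be any irreducible Whittaker $\mathcal{H}_q^\prime$-module of type $(\zeta,a)$, generated by a Whittaker vector $u$ with $\gamma^{1/2}u=q^{a/2}u$. Then $\C(q^{1/2})u$ is a copy of the one-dimensional $\mathcal{H}_q^\prime(\mathfrak b)$-module $\C_{(\zeta,a)}(q^{1/2})$. By the universal property of induction \eqref{b}, there is a module map $M^{(\prime,q)}_{\zeta,a}\to V$ with $v\mapsto u$. This map is surjective because $V=\mathcal{H}_q^\prime.u$. It is nonzero and its source is irreducible, so it is an isomorphism.
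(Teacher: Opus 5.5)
Your irreducibility argument is exactly the paper's: take the lex-maximal $p_0$, apply $(h_+-\zeta)^{p_0}$, and use Lemma \ref{pf}(1),(2). The worry in your `main obstacle' paragraph is unfounded, because $p<p_0$ in the lex order is precisely the hypothesis of Lemma \ref{pf}(2). Your description of $h_{i,r}-\zeta(h_{ir})$ as $[ra]_q\,\partial/\partial h_{i,-r}$ in fact proves that lemma, since the lex order refines the componentwise order.

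Your uniqueness argument via the universal property of induction is correct and supplies a part that the paper's proof omits entirely. It does rely on your reading of ``type $(\zeta,a)$'' as $\gamma^{1/2}u=q^{a/2}u$, as in \eqref{a}. If one imposed only $\gamma=q^a$, then $\gamma^{1/2}=-q^{a/2}$ would also be allowed, giving a second, non-isomorphic module.
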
 
  \begin{proof}
      Any non-zero element of $M^{(\prime,q)}_{\zeta,a}$ is of the form
      \[w=\lambda_1 h_-^{p_1}v+\lambda_2h_-^{p_2}v+\cdots \lambda_k h_{-}^{p_k}v,\]
      where $\lambda_j \in \C (q^{\frac{1}{2}})^\times$ and $p_j\in \N^{\dot{I}\times \Z_{>0}}$ for $1\leq j\leq k$. Any non-zero submodule must contain an element of the above form. We assume without loss of generality that $p_1$ is maximal with respect to the ordering $(\N_{\bf f}^{\dot{I}\times \Z}, \leq)$. Then applying Lemma \ref{pf} (1) and (2), we get that $(h_+-\zeta)^{p_1}.w=\lambda_1 p_1![a]^{p_1}v$ belongs to the submodule. Hence $M^{(\prime,q)}_{\zeta,a}$ must be irreducible $\mathcal{H}_q^\prime$-module.             
  \end{proof}

  \begin{remark}\label{rmk1}
  \begin{enumerate}
  \item The isomorphism  in   \cite[Proposition 2.1]{FHW15} gives a  characterization of all  simple Whittaker modules over the quantized infinite rank Weyl algebra with non-zero central charge. 
      \item  We note that for any given $t\in \Z$,  $M^{\prime,q}_{\zeta, a}$ becomes an irreducible $\mathcal{H}_q$-module by defining $D.v=q^{t}v$ and then scaling it to the whole module. Denote this module by $K^q_{\zeta, a,t}$. 
  \end{enumerate}
   \end{remark}

The following is clear. 

 \begin{lemma}
     The set $\{D^mh_-^pv: m\in \Z,\; p\in \N^{\dot{I}\times \Z_{_>0}}\}$ is a $\C(q^{1/2})$-basis of $M^q_{\zeta, a}$.
 \end{lemma}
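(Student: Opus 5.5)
The plan is to treat $M^q_{\zeta,a}$ as an induced module and reduce to a PBW statement for $\mathcal{H}_q$ over $\mathcal{H}_q(\mathfrak{b})$. Here $\mathcal{H}_q(\mathfrak{b})$ is the subalgebra generated by $\mathcal{H}_q^\prime(+)$ and $\gamma^{\pm 1/2}$; in particular $D^{\pm1}$ is \emph{not} in it, which is why $D$ acts freely. So the first step is to show that $\mathcal{H}_q$ is a free right $\mathcal{H}_q(\mathfrak{b})$-module with basis $\{D^m h_-^p : m\in\Z,\ p\in \N^{\dot{I}\times\Z_{>0}}_{\bf f}\}$. Given this, the tensor product $\mathcal{H}_q\otimes_{\mathcal{H}_q(\mathfrak{b})}\C_{(\zeta,a)}(q^{1/2})v$ has the claimed basis, since the one-dimensional module is free of rank one over $\C(q^{1/2})$.

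\textbf{Spanning.} Using the relations \eqref{qhr} and $D h_{ir} D^{-1}=q^r h_{ir}$, every monomial in the generators of $\mathcal{H}_q$ can be rewritten in the form $D^m h_-^{p}\, u$ with $u\in \mathcal{H}_q(\mathfrak{b})$.
\begin{itemize}
\item Conjugation by $D$ only rescales the $h_{ir}$ by powers of $q$, so all powers of $D$ can be moved to the left.
\item $\gamma^{\pm1/2}$ is central, so it can be moved to the right.
\item Each positive $h_{ir}$ can be moved to the right past the negative $h_{j,-s}$. The commutator produced is $\frac{\gamma^r-\gamma^{-r}}{q-q^{-1}}$, which is central and lies in $\mathcal{H}_q(\mathfrak{b})$.
\end{itemize}
An induction on the length of the monomial then gives spanning. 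Applying this to $v$, and using $u.v\in\C(q^{1/2})v$, shows that the vectors $D^m h_-^p v$ span $M^q_{\zeta,a}$.

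\textbf{Linear independence.} I will invoke the imaginary PBW basis of Theorem \ref{IPBW}, restricted to the imaginary and Cartan part. This gives that the ordered monomials $h_-^{p}D^m\gamma^{s/2}h_+^{p'}$ are linearly independent in $\qag$, hence form a basis of $\mathcal{H}_q$. Since $h_-^pD^m = q^{(\ast)}D^m h_-^p$ for some integer exponent, the same holds for the monomials $D^m h_-^p \gamma^{s/2}h_+^{p'}$.

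Next I will check that $\mathcal{H}_q(\mathfrak{b})$ has basis $\gamma^{s/2}h_+^{p'}$. Then $\mathcal{H}_q=\bigoplus_{m,p} D^m h_-^p\,\mathcal{H}_q(\mathfrak{b})$ as a free right module, and tensoring gives the basis.

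\textbf{Alternative route.} One can instead present $\mathcal{H}_q$ abstractly as a skew extension $\mathcal{H}_q^\prime[D^{\pm1};\sigma]$, where $\sigma$ is the automorphism $h_{ir}\mapsto q^r h_{ir}$. Then $M^q_{\zeta,a}\cong \bigoplus_m D^m\otimes M^{(\prime,q)}_{\zeta,a}$. The basis $h_-^pv$ of $M^{(\prime,q)}_{\zeta,a}$ is standard: it is a Heisenberg-type Verma/Whittaker induction, and it is already used in Proposition \ref{ired1}.

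\textbf{Main obstacle.} The main obstacle is justifying linear independence, that is, that no relation in $\qag$ forces a dependence among the $D^m h_-^p$ modulo $\mathcal{H}_q(\mathfrak{b})$. I would first try to deduce it directly from Theorem \ref{IPBW} by identifying $h_{i,r}$ with the imaginary root vectors $E_{(r\delta,i)}$ up to invertible triangular change of variables. If that identification is messy, I would fall back on the skew-extension presentation. There, independence follows because $D$ acts on $M^{(\prime,q)}_{\zeta,a}$-valued sequences freely, and the grading by powers of $D$ separates the summands.
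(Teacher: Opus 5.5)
Your proposal is correct and follows the PBW-type argument that the paper leaves implicit; the paper simply calls the lemma ``clear''. Spanning comes from \eqref{qhr} and $Dh_{ir}D^{-1}=q^rh_{ir}$, linear independence from Theorem~\ref{IPBW} once the orthonormalized $h_{i,r}$ are matched with the imaginary root vectors via Beck's isomorphism \cite{B94} (up to central powers of $\gamma^{1/2}$ and an invertible triangular change of variables in each degree), and freeness of $\mathcal{H}_q$ as a right $\mathcal{H}_q(\mathfrak{b})$-module. One caveat: your fallback via the abstract skew Laurent extension does not avoid this input, because identifying the subalgebra $\mathcal{H}_q\subset\qag$ with $\mathcal{H}_q^\prime[D^{\pm1};\sigma]$ over the abstract quantum Heisenberg algebra is itself the linear-independence statement.
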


   For any $m\in \Z$, and $p\in \N^{\dot{I}\times \Z_{>0}}_{\bf f}$ set:
  \[(h_+-q^{-m}\zeta)^p=\prod_{(i,r)\in \dot{I}\times \Z_{>0}}(h_{i,r}-q^{-mr}\zeta(h_{ir}))^{p_{ir}}.\] 
  \begin{lemma}\begin{enumerate}
      
      \item  $(h_+-q^{-m}\zeta)^p. D^mh_-^pv= p![a]^pD^m v.$  in $M_{\zeta,a}^q$. 
      \item  $(h_+-q^{-m}\zeta)^{p_1}. D^mh_-^{p_2}v=0$ for $p_1>p_2$ in  $\N^{\dot{I}\times \Z}_{\bf f}$.
      \end{enumerate}
 \end{lemma}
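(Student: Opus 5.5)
The approach is to reduce to Lemma \ref{pf} by commuting $D^m$ to the left. From $Dh_{i,r}D^{-1}=q^rh_{i,r}$ we get $h_{i,r}D^m=D^m\,(D^{-m}h_{i,r}D^m)=q^{-mr}D^mh_{i,r}$. Consequently
\[
(h_{i,r}-q^{-mr}\zeta(h_{ir}))D^m=q^{-mr}D^m(h_{i,r}-\zeta(h_{ir})),
\]
and taking products over $(i,r)$ with the multiplicities $p_{ir}$ gives the operator identity
\[
(h_+-q^{-m}\zeta)^p\,D^m=q^{-m\sum_{(i,r)}rp_{ir}}\,D^m\,(h_+-\zeta)^p .
\]
Since the factors commute with each other, the ordering of the product causes no trouble.

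Next, the vectors $h_-^{p'}v$ lie in the canonical $\mathcal{H}_q^\prime$-submodule $M^{(\prime,q)}_{\zeta,a}\subset M^q_{\zeta,a}$. The action of $(h_+-\zeta)^{p}$ on them is therefore computed entirely inside $M^{(\prime,q)}_{\zeta,a}$.

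For (2), Lemma \ref{pf}(2) gives $(h_+-\zeta)^{p_1}h_-^{p_2}v=0$ for $p_2<p_1$, so the whole expression vanishes. The scalar prefactor is irrelevant here.

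For (1), Lemma \ref{pf}(1) gives $(h_+-\zeta)^p h_-^pv=p![a]^pv$. Hence
\[
(h_+-q^{-m}\zeta)^pD^mh_-^pv=q^{-m\sum rp_{ir}}\,p![a]^p\,D^mv .
\]
This matches the statement up to the nonzero factor $q^{-m\sum rp_{ir}}$. I would handle this in one of two ways: either note that the intended normalization absorbs this power of $q$, or equivalently pass $D^m$ to the other side using $D^mh_-^p=q^{-m\sum rp_{ir}}h_-^pD^m$ and adjust the convention. Either way, what matters for the subsequent irreducibility argument is that the scalar is nonzero.

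The main obstacle is bookkeeping rather than substance. First, the power of $q$ and the direction of the twist ($q^{-mr}$ versus $q^{mr}$) must be tracked consistently with $Dh_{ir}D^{-1}=q^rh_{ir}$. Second, the ordering in (2) is stated as $p_1>p_2$ in $\N^{\dot I\times\Z}_{\bf f}$, and this has to be matched with the convention $p_1<p_2$ of Lemma \ref{pf}(2), i.e.\ the operator exponent must exceed the vector's. I would check both points on the rank-one case $p=\delta_{(i,r)}$ before writing the general statement.
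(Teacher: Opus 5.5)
Your argument is correct and is exactly what the paper's one-line proof (``commutation relations of $\mathcal{H}_q$ and property of the Whittaker vector'') amounts to: you move $D^m$ to the left using $h_{i,r}D^m=q^{-mr}D^mh_{i,r}$ and then apply Lemma \ref{pf}. In (2) the operator exponent exceeds the vector's, just as in Lemma \ref{pf}(2).

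Your caveat on (1) is also right. With $Dh_{ir}D^{-1}=q^rh_{ir}$, the identity as literally written is off by the nonzero factor $q^{-m\sum_{(i,r)}rp_{ir}}$: already for $m=1$, $p_{i1}=1$ and all other entries zero, one gets $q^{-1}[a]_q\,Dv$. The stated identity holds exactly for the vector $h_-^pD^mv$ in place of $D^mh_-^pv$, and nothing later depends on the precise value of this nonzero scalar.
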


 \begin{proof}
     We use commutation relations of $\mathcal{H}_q$ and property of the Whittaker vector. 
 \end{proof} 
  
 \begin{theorem} \label{qh}
     Assume $a\in \C^\times$. Then the $\mathcal{H}_q$-module $M_{\zeta,a}^q$ is irreducible. 
 \end{theorem}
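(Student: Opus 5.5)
Given a nonzero submodule $L\subseteq M^q_{\zeta,a}$, I will show that $L$ contains a vector of the form $D^m v$. Since $D$ is invertible in $\mathcal{H}_q$, this gives $v\in L$ and hence $L=M^q_{\zeta,a}$. By the basis lemma, a nonzero $w\in L$ can be written as
\[
w=\sum_{m\in F}\sum_{p}\lambda_{m,p}D^m h_-^p v
\]
with $F\subset\Z$ finite and finitely many $p$ for each $m$. The argument runs in two stages: first cut $w$ down to a single power of $D$, then apply the reduction of Proposition \ref{ired1}.

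\textbf{Stage 1: isolating one power of $D$.} First compute how $h_{i,r}$ with $r>0$ acts on $D^m h_-^p v$. From $D h_{ir}D^{-1}=q^r h_{ir}$ we get $h_{ir}D^m=q^{-mr}D^m h_{ir}$, so
\[
(h_{ir}-\mu)\,D^m h_-^p v \;=\; D^m\bigl(q^{-mr}h_{ir}-\mu\bigr)h_-^p v .
\]
Expand $h_{ir}h_-^p v=\zeta(h_{ir})h_-^p v+(\text{terms of lower degree in }h_{i,-r})$. The leading part therefore gets the scalar $q^{-mr}\zeta(h_{ir})-\mu$, which vanishes exactly when $\mu=q^{-mr}\zeta(h_{ir})$. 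This is where $\zeta\neq 0$ enters: fix $(i,r)$ with $\zeta(h_{ir})\neq 0$. Then the values $q^{-mr}\zeta(h_{ir})$ for $m\in F$ are pairwise distinct elements of $\C(q^{1/2})$, since $r\neq 0$ and $q$ is an indeterminate. Finite support of $\zeta$ causes no problem, because the distinctness comes from the powers of $q$ and not from the size of the support.

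Fix a target $m_0\in F$. Apply $\prod_{m\neq m_0}(h_{ir}-q^{-mr}\zeta(h_{ir}))^{N}$ with $N$ larger than every exponent of $h_{i,-r}$ occurring in $w$. Order terms by the $h_{i,-r}$-degree: each factor $(h_{ir}-q^{-mr}\zeta(h_{ir}))$ lowers that degree by one in the $D^m$ component (using the computation above) and multiplies the top part of the other components by a nonzero scalar. After enough applications every $D^m$ component with $m\neq m_0$ is killed, and the $D^{m_0}$ component survives with nonzero leading coefficient. To organise this cleanly I would filter each $D^m$-isotypic part by the degree in $h_{i,-r}$. On the associated graded, $h_{ir}$ acts triangularly with diagonal entries $q^{-mr}\zeta(h_{ir})$, so $\prod_{m\neq m_0}(h_{ir}-q^{-mr}\zeta(h_{ir}))^N$ is a generalized eigenprojection onto the $m_0$ block up to an invertible factor. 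I expect this triangularity bookkeeping, in particular checking that the $m_0$ part cannot be annihilated, to be the main obstacle. The clean route is to treat $h_{ir}$ as a locally finite operator on the finite-dimensional span of the relevant vectors, whose generalized eigenspaces are indexed by the distinct scalars $q^{-mr}\zeta(h_{ir})$.

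\textbf{Stage 2: reduction to $D^{m_0}v$.} After Stage 1, $L$ contains a nonzero element $w'=D^{m_0}\sum_p\mu_p h_-^p v$. Choose $p_1$ maximal among the $p$ with $\mu_p\neq 0$. By the preceding lemma (parts (1) and (2)),
\[
(h_+-q^{-m_0}\zeta)^{p_1}w'=\mu_{p_1}\,p_1!\,[a]^{p_1}D^{m_0}v .
\]
Since $a\in\C^\times$, each $[ra]_q$ is nonzero, so this vector is nonzero. Hence $D^{m_0}v\in L$, which gives $v\in L$ and therefore $L=M^q_{\zeta,a}$.
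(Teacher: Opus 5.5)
Your proof is correct and uses the same two ingredients as the paper's proof, applied in the opposite order. The first ingredient separates $D$-components through the distinct eigenvalues $q^{-mr}\zeta(h_{ir})$ of $h_{ir}$, for a fixed $(i,r)$ with $\zeta(h_{ir})\neq 0$. The second is the reduction inside a single component via Proposition \ref{ired1} and the detector lemma.

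The paper first normalizes by a power of $D$ and reduces the lowest component to exactly $v$. Then $v$ is a genuine $h_{ir}$-eigenvector, and $\prod_k(h_{ir}-q^{-m_kr}\zeta(h_{ir}))^{p'_k(ir)+1}$ simply kills the remaining terms. Projecting first, as you do, instead relies on your observation that $h_{ir}$ acts on $D^{m_0}M^{(\prime,q)}_{\zeta,a}$ as the scalar $q^{-m_0r}\zeta(h_{ir})$ plus a locally nilpotent operator. That observation is valid, so the other factors are injective on the target block and the $D^{m_0}$ component survives.
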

 
 \begin{proof}
 Suppose $N$ is a non-zero submodule of $M^q_{\zeta,a}$. Any non-zero element of $M^q_{\zeta,a}$ is of the form
 \[\sum_{\lambda_{(m_j,p_j)}\in F_v} \lambda_{(m_j,p_j)}D^{m_j}h_-^{p_j}.v,\]
where $F_v$ is a finite subset of $\C(q^{\frac{1}{2}})^\times$. We assume that $N$ contains this element.
 
    Now applying the proper $D^m$ in this element, we will have 
    \[(u+\sum \lambda^\prime_{(m_j,p_j)}D^{m_j}h_-^{p_j}).v\in N,\] where $u(\neq 0)\in {M}_{\zeta,a}^{(\prime,q)}$, $(m_j,p_j)\in \Z_{>0}\times \N^{\dot{I}\times \Z_{>0}}_{\bf f}$ and $\lambda^\prime_{(m_j,p_j)}\in \C(q^{1/2})^\times$. Now by Proposition \ref{ired1}, there exists $w\in \mathcal{H}_q^{\prime}$ such that $w.u=v$ and therefore $v+\sum \lambda^\prime_{(m_j,p_j)}w.(D^{m_j}h_-^{p_j}.v)\in N$, which gives $v+\sum \lambda^{\prime \prime}_{(m_k,p_k^\prime)}(D^{m_k}h_-^{p^{\prime}_k}.v)\in N$. Now we know that $\zeta \neq 0$, so there exists $(i,r)\in \dot{I}\times \Z_{>0}$ such that $\zeta(h_{ir})\neq 0.$ Now we see that
 
 \[\prod_{k}(h_{ir}-q^{-m_kr}\zeta(h_{ir}))^{p_k^\prime(ir)+1}.(v+\sum \lambda^{\prime \prime}_{(m_k,p_k)}(D^{m_k}h_-^{p^{\prime}_k}.v))\]\[=\zeta(h_{ir})\prod_{k}(1-q^{-m_kr})^{p_k^\prime(ir)+1}v\in N,\]
Here it is easy to see that the coefficient of $v$ is non-zero and hence $M^q_{\zeta, a}$ is irreducible for $\mathcal{H}_q$.
\end{proof}

\section{Imaginary Whittaker Modules over $\qag$}\label{aform}
In this section, we define imaginary Whittaker modules over $\qag$ and consider their irreducibility.

\medskip
Let $\U_q^d(\pm)$ be the subalgebra of $\qag$ generated by the Drinfel'd generators $x_{ik}^{\pm}\; (i\in \dot{I},\; k\in \Z)$ and $\mathcal{H}_q^d$ be the subalgebra of $\qag$ generated by $\mathcal{H}_q$ and $U_q^0$. Let $B_q^d$ be the subalgebra of $\qag$ generated by $\U_q^+$ and $\mathcal{H}_q^d$. For any $l\in \Z$, we assume $\mathcal{H}_q-\text{Mod}^{l}$ to be the subcategory of $\mathcal{H}_q-\text{Mod}$ such that $\gamma=q^{l} \;Id$ on every module of $\mathcal{H}_q-\text{Mod}^{l}$. Given any $\lambda\in \dot{P}$, we can consider the following functor

\[\mathbb{I}_q^\lambda:\mathcal{H}_q-\text{Mod}^{ l}\rightarrow \qag-\text{Mod}\]
with
\begin{equation}\label{qam}    \mathbb{I}_q^\lambda(V)=\qag\otimes _{B_q^d}V,
\end{equation}
 where on $V$, the $B_q^d$-module action is extended by $x_{ik}^+.V=0,\; K_{_i}^{\pm1}$  acts as a scalar $q^{\lambda(h_i)}$ for $i\in \dot{I}$.  

  We denote $\mathbb{I}_q^{\lambda}(M_{\zeta,a}^q)$ by $\widehat{M}_{\zeta,\lambda}^q$ and $\mathbb{I}_q^\lambda(K_{\zeta,a,t})$ by $\widehat{K}^q_{\zeta,\lambda,t}$.
  
  \begin{conjecture}\label{conjecture} 
Given any $\lambda\in \dot{P}$ and $l=a$ with  $(\zeta,a) \in (Hom_{\text{alg}}(\mathcal{H}_q^\prime(+), \C(q^{1/2})), \C^\times)$, the $\qag$-modules $\widehat{M}_{\zeta,\lambda}^q,\; \widehat{K}^q_{\zeta,\lambda,t}$ are irreducible.
  \end{conjecture}
  
  We will prove the conjecture for $V_q=K^{q}_{\zeta,a,t}$ for all nontwisted affine Kac-Moody Lie algebra $\ag$ and for $V_q=M^q_{\zeta,a}$ over $\qag$ when $\ag=A_1^{(1)}$.\\

For the rest of this section we assume  that $V_q=K^q_{\zeta,\lambda,t}$.
   Let $\mathcal{H}_{\mathbb{A}}$  be the $\mathbb{A}$-subalgebra of $\qag$ generated by $$h_{is},\;  K_{i}^{\pm1},\; \gamma^{\pm\frac{1}{2}},\; D^{\pm1},\; \left [K_i\;; \;s \atop n\right],\; \left [D\;; \;s \atop n\right],\; \left [\gamma\;; \;s \atop 1\right]_i, \left [\gamma \psi_i\;; \;k,\;l \atop 1\right], $$ 
  for $i\in \dot{I},\; r,s \in \Z,\; s\neq 0$. We see that $\mathcal{H}_q^d/<q^{\frac{1}{2}}-1>\; \cong\; \U(H+\h)$ and $\mathcal{H}_{\mathbb{A}}\otimes \C(q^{\frac{1}{2}})\cong \mathcal{H}_q^d$.  We get that $(K_{\zeta,\lambda,t}^q)^{\mathbb{A}}=\mathbb{A}\{h_-^fv:f\in \N^{\dot{I}\times \Z}_{\bf f}\}$ is an $\mathcal{H}_{\mathbb{A}}$-submodule of $K_{\zeta,a,t}^q$ and its classical limit $\overline{K}_{\zeta,\lambda,t}=(K_{\zeta,\lambda,t}^q)^{\mathbb{A}}/<q^{\frac{1}{2}}-1>(K_{\zeta,\lambda,t}^q)^{\mathbb{A}}$ is isomorphic to $K_{\zeta^{\prime},\lambda,b}$ for some $(\zeta^\prime, b)\in(\text{Hom} (H(+),\C), \C)$. Hence it is irreducible over $H\oplus \h$.

  Now we define $\mathbb{A}$-form of 
   $\widehat{K}^q_{\zeta,\lambda,t}$ to be the $\U_{\mathbb{A}}(\ag)$-submodule 
  
\begin{equation}\label{afm}
     (\widehat{K}^q_{\zeta,\lambda,t})_{\mathbb{A}}=\sum_{f\in (\N^{\Z}_{\bf f})^n}\U_{\mathbb{A} }\otimes_{B^q} h_-^{f}v, 
  \end{equation}

where $B^q$ is the $\mathbb{A}$-subalgebra generated by $\U_{\mathbb{A}}^+\; \cup\; \mathcal{H}_{\mathbb{A}}$. It is easy to see that $(\widehat{K}^q_{\zeta,\lambda,t})_{\mathbb{A}}$ is spanned by $\U_{\mathbb{A}}^-\otimes h_-^f$ as an $\mathbb{A}$-module and it is a free $\U_{\mathbb{A}}^{-}$-module; furthermore, we have $\C(q^{\frac{1}{2}})\otimes _{\mathbb{A}}(\widehat{K}^q_{\zeta,\lambda,t})_{\mathbb{A}}\cong \widehat{K}^q_{\zeta,\lambda,t}$ as $\C(q^{\frac{1}{2}})$-vector space isomorphism. 
Now we construct the classical limit of $\mathbb{I}_q^\lambda(V_q)$. Let $\mathbb{J}$ be the ideal of $\mathbb{A}$ generated by $q^{\frac{1}{2}}-1.$ Now we take $\U^\prime=\mathbb{A}/\mathbb{J}\otimes_{\mathbb{A}}\U_{\mathbb{A}}$ and $\bar{\U}=\U^\prime/K^\prime$, here $K^\prime$ is the ideal of $\U^\prime$ generated by $K_i-1, \;D-1,\; \gamma^{\frac{1}{2}}-1$, then $\bar{\U}$ is the $q^{\frac{1}{2}}=1$ limit of $\qag$ and hence $\U(\ag)\cong \bar{\U}$. Now construct $\overline{(\widehat{K}^q_{\zeta,\lambda,t})}_{\mathbb{A}}=\mathbb{A}/\mathbb{J}\otimes_{ \mathbb{A}}(\widehat{K}^q_{\zeta,\lambda,t})$, it can be shown that it is a well-defined $\bar{\U}$-module, we call it the classical limit of $(\widehat{K}^q_{\zeta,\lambda,t})_{\mathbb{A}}$. Similar to Proposition 4.6 and Proposition 4.7 of \cite{FHW15}, we get $\overline{(\widehat{K}^q_{\zeta,\lambda,t})}_{\mathbb{A}
}\cong  \widehat{\bar{V}}$, and hence we have $\mathbb{I}_q^\lambda(V_q)$ is a quantum deformation of $Ind_{P}^{\ag}( \bar{V})$. Now applying the Theorem \ref{classical}, we get one of our main results, which yields irreducible $\qag$-modules from irreducible $\mathcal{H}_q$-modules.

\begin{theorem}\label{qi}
    Let $(\zeta, a,t,\lambda)\in (Hom_{\text{alg}}(\mathcal{H}_q^\prime(+), \C(q^{1/2})), \C^\times, \C,\dot{P})$. Then $\widehat{K}^q_{\zeta,\lambda, t}$ is an irreducible $\qag$-module. 
\end{theorem}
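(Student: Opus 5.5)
The plan is to deduce irreducibility of $\widehat{K}^q_{\zeta,\lambda,t}$ from Theorem \ref{classical} by a specialization (classical limit) argument in the spirit of \cite{FHW15}, using the $\mathbb{A}$-form $(\widehat{K}^q_{\zeta,\lambda,t})_{\mathbb{A}}$ of Equation \eqref{afm}. Let $N\subset \widehat{K}^q_{\zeta,\lambda,t}$ be a nonzero proper $\qag$-submodule. Since $D$ acts semisimply on $K^q_{\zeta,a,t}$ (with eigenvalues $q^{t+\text{weight}}$) and the $K_i$ act semisimply, $\widehat{K}^q_{\zeta,\lambda,t}$ is a weight module for $\U_q^0$ with finite-dimensional-over-$\mathbb{A}$ pieces in each $\U_{\mathbb{A}}^-$-degree, so $N$ is a sum of its weight components. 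I set $N_{\mathbb{A}}:=N\cap (\widehat{K}^q_{\zeta,\lambda,t})_{\mathbb{A}}$. This is a $\U_{\mathbb{A}}$-submodule, and since every element of $N$ can be multiplied by a polynomial in $q^{\frac12}$ to land in the $\mathbb{A}$-form, $\C(q^{\frac12})\otimes_{\mathbb{A}} N_{\mathbb{A}}\cong N$.

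The key steps, in order, are as follows. First, I check that $N_{\mathbb{A}}$ is a \emph{pure} submodule: if $(q^{\frac12}-1)x\in N_{\mathbb{A}}$ with $x$ in the $\mathbb{A}$-form, then $x\in N$, hence $x\in N_{\mathbb{A}}$. Consequently the image $\overline{N}$ of $N_{\mathbb{A}}$ in the classical limit $\overline{(\widehat{K}^q_{\zeta,\lambda,t})}_{\mathbb{A}}$ is isomorphic to $N_{\mathbb{A}}/\mathbb{J}N_{\mathbb{A}}$. Second, $\overline{N}$ is a $\bar{\U}\cong\U(\ag)$-submodule of $\overline{(\widehat{K}^q_{\zeta,\lambda,t})}_{\mathbb{A}}\cong \widehat{\bar V}=\widehat{K}_{\zeta',\lambda,b}$, using the identification stated just before the theorem (the analogue of Propositions 4.6 and 4.7 of \cite{FHW15}). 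Third, I compare sizes weight-space by weight-space. Fix a weight and a $\U^-_{\mathbb{A}}$-degree; the corresponding piece of the $\mathbb{A}$-form is free over the localization of $\mathbb{A}$ at $\mathbb{J}$. A pure submodule of a free module of finite rank over this DVR-like local ring is a direct summand, so its rank equals the dimension of its reduction mod $\mathbb{J}$, which in turn equals $\dim_{\C(q^{1/2})}$ of the corresponding piece of $N$. Since $N\neq0$, we get $\overline{N}\neq 0$. Since $N\neq \widehat{K}^q_{\zeta,\lambda,t}$, some piece has strictly smaller rank, so $\overline{N}\neq \widehat{K}_{\zeta',\lambda,b}$. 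Finally, irreducibility of $K^q_{\zeta,a,t}$ (Remark \ref{rmk1}) gives that $\bar V= K_{\zeta',\lambda,b}$ is irreducible, as noted before the statement. Theorem \ref{classical} then says $\widehat{K}_{\zeta',\lambda,b}$ is irreducible, which contradicts the existence of $\overline{N}$.

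The main obstacle is the finiteness needed in the third step. The module $K^q_{\zeta,a,t}$ is infinite-dimensional, and weight spaces of $\widehat{K}^q$ for $\U_q^0$ are infinite-dimensional, since the $h_{i,-r}$ shift the $D$-eigenvalue but infinitely many monomials $h_-^f$ share a degree only finitely often. To get around this, I would grade simultaneously by the $\h$-weight, the $D$-eigenvalue, and the filtration degree in $h_-$, so that the graded pieces are finite. If this grading is not preserved by $N$, I would instead argue with a single element: take $x\in N_{\mathbb{A}}\setminus (q^{\frac12}-1)N_{\mathbb{A}}$, whose image is nonzero by purity. For properness, I would show that $v\notin N$ forces $\bar v\notin\overline{N}$, using that $\overline{N}\ni \bar v$ would generate everything classically and then lifting via Nakayama over the local ring at $\mathbb{J}$ on the $\mathbb{A}$-span of a suitable finite set of vectors. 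This would avoid dimension counting altogether.
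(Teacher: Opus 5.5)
Your strategy is the paper's: intersect a proper submodule with the $\mathbb{A}$-form \eqref{afm}, reduce modulo $\mathbb{J}$, and contradict Theorem \ref{classical}. The paper simply asserts that the reduction is a nonzero proper submodule. Your purity argument does give nonvanishing, and you are right that purity alone does not give properness in infinite rank. Your rank count fails as written, but your fallback closes in one line. $N$ is the sum of its joint $(K_i,D)$-eigenspaces. The $x^-_{ik}$ carry $\h$-weight $-\alpha_i$, and $D.h_-^fv=q^{t-|f|}h_-^fv$ with $|f|=\sum_{(i,r)}rf(i,r)$. Hence the eigenspace of $\h$-weight $\lambda$ and $D$-eigenvalue $q^t$ is $\C(q^{1/2})v$, and it meets the $\mathbb{A}$-form in $\mathbb{A}v$. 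Projecting a relation $v=n+(q^{1/2}-1)y$ with $n\in N_{\mathbb{A}}$ onto it gives $(1-(q^{1/2}-1)c)v\in N$ for some $c\in\mathbb{A}$, hence $v\in N$.

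The genuine gap, which you inherit from the paper, is in your first sentence: $D$ cannot act on $K^q_{\zeta,a,t}$ with $D.v=q^tv$ unless $\zeta=0$. Applying $Dh_{ir}=q^rh_{ir}D$ to $v$ gives $q^t\zeta(h_{ir})v=q^{r+t}\zeta(h_{ir})v$, so $\zeta(h_{ir})=0$ for all $r>0$. In fact no $D$-action at all extends $M^{\prime,q}_{\zeta,a}$ when $\zeta\neq0$. Any such action would make $D.v$ a Whittaker vector of type $h_{ir}\mapsto q^{-r}\zeta(h_{ir})$. But $h_{ir}-\zeta(h_{ir})$ acts on the basis $h_-^fv$ as $[ra]_q\,\partial/\partial h_{i,-r}$, and $[ra]_q\,\partial w/\partial h_{i,-r}=(q^{-r}-1)\zeta(h_{ir})w$ has no nonzero polynomial solution once $\zeta(h_{ir})\neq0$. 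The same computation with $[d,b_{ir}]=rb_{ir}$ shows that $d.v=bv$ forces $\zeta'=0$ in the classical $K_{\zeta',\lambda,b}$ you invoke at the end. So your argument, like the paper's, is a valid proof only for $\zeta=0$, i.e. quantum imaginary Verma-type modules with $\gamma$ acting by $q^a$, $a\neq 0$. For $\zeta\neq0$ the module in the statement is not defined, and no specialization argument can supply it.
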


\begin{proof} 
Let $W$ be a proper submodule of $\widehat{K}^q_{\zeta,\lambda, t}$. Then $W^{\mathbb{A}}=W\; \cap\;(\widehat{K}^q_{\zeta,\lambda, t})_{\mathbb{A}}$ is a proper submodule of $(\widehat{K}^q_{\zeta,\lambda, t})_{\mathbb{A}}$, and hence its classical limit is a proper submodule of $ \widehat{\bar{V}}$, which is a contradiction by Theorem \ref{classical}.
\end{proof}

\begin{remark}\label{rmk2}
    Note that  the classical limit can't be taking in the case of $V_q=M_{\zeta,a}^q$ as $D$ acts freely on $V_q$. 
\end{remark}
\subsection{Quantization of imaginary Whittaker modules for $\U_q([\ag,\ag])$:}\label{qderaff}
Assume $\mathcal{H}_q^{\prime,d}$ be the subalgebra of $\U_q([\ag.\ag])$ generated by $\mathcal{H}_q^\prime$ and $K_i^{\pm 1},\; i\in I$. Let $B_q^{(d,\prime)}$ be the subalgebra of $\U_q([\ag,\ag])$ with generated by $\U_q^+$ and $\mathcal{H}_q^{(\prime,d)}$.
Let $M^{(\prime,q)}_{\zeta,a}$ be as in equation \ref{b} a $\mathcal{H}_q^\prime$ module. Given any $\lambda\in \dot{P}$, we make $M^{(\prime,q)}_{\zeta,a}$ a  module with $K_i$ action is given by $q^{\lambda(h_i)}$ and extend it to $B_q^{(d,\prime)}$ by trivial action of $\U_q^+$. Then similar to $\qag$, we get a parabolic induction functor depending on $\lambda$, let us denote the image of $M_{\zeta,a}^\prime$ under this functor by 
$\widetilde{M}_{(\zeta,\lambda)}^{\prime,q}$.

\begin{theorem}\label{qafder}
 Assume $(\zeta, a,\lambda )\in (Hom_{\text{alg}}(\mathcal{H}_q^\prime(+), \C(q^{1/2})), \C^\times, \dot{P})$, then $\widetilde{M}_{(\zeta,\lambda)}^{\prime,q}$ is irreducible module over $\U_q([\ag.\ag])$ 
\end{theorem}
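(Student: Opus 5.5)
We follow the specialization argument used for Theorem \ref{qi}, with Theorem \ref{da} as the classical input. The derivation $D$ is absent from $\U_q([\ag,\ag])$, so the obstruction of Remark \ref{rmk2} (the free action of $D$) does not arise. The module $M^{(\prime,q)}_{\zeta,a}$ carries only the action of $\mathcal{H}_q^{\prime,d}$, and Proposition \ref{ired1} gives it a monomial basis $\{h_-^p v\}$ that specializes well.

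\textbf{Step 1 (integral forms).} Let $\mathcal{H}'_{\mathbb{A}}$ be the $\mathbb{A}$-subalgebra generated by the $h_{is}$, $K_i^{\pm1}$, $\gamma^{\pm\frac12}$, $\left[K_i\,;\,s\atop n\right]$, $\left[\gamma\,;\,s\atop 1\right]_i$ and $\left[\gamma\psi_i\,;\,k,l\atop 1\right]$, that is, $\mathcal{H}_{\mathbb{A}}$ with the $D$-generators removed. Let $\U'_{\mathbb{A}}$ be the analogous $\mathbb{A}$-form of $\U_q([\ag,\ag])$. By the same argument as Theorem \ref{algaf}, $\U'_{\mathbb{A}}$ has a triangular decomposition $\U^-_{\mathbb{A}}\U'^0_{\mathbb{A}}\U^+_{\mathbb{A}}$.

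We need to assume $\zeta(h_{ir})\in\mathbb{A}$, possibly after rescaling the generators. Then $(M^{(\prime,q)}_{\zeta,a})_{\mathbb{A}}=\mathbb{A}\{h_-^p v\}$ is stable under $\mathcal{H}'_{\mathbb{A}}$. Indeed, $[h_{ir},h_{i,-r}]=[ra]_q\in\mathbb{A}$ acts on $v$, and the $K_i$ act by $q^{\lambda(h_i)}$.

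Its classical limit at $q^{\frac12}=1$ is the $H'\oplus\h$-module $M'_{\bar\zeta,\lambda}$. Here $\bar\zeta$ is the specialization of $\zeta$, and the central charge is $\lim [ra]_q/r = a\ne0$ after normalizing $b_{ir}$. We require $\bar\zeta\ne0$; this holds, for instance, after clearing a common power of $(q^{\frac12}-1)$ from $\zeta$. Then $M'_{\bar\zeta,\lambda}$ is irreducible by part (1) of the proposition in Section 4 (from \cite{C08}).

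\textbf{Step 2 (lattice in the induced module).} Define
\[(\widetilde{M}^{\prime,q}_{(\zeta,\lambda)})_{\mathbb{A}}=\sum_p \U'_{\mathbb{A}}\otimes h_-^p v,\]
in analogy with \eqref{afm}. By the triangular decomposition it equals $\U^-_{\mathbb{A}}\otimes(M^{(\prime,q)}_{\zeta,a})_{\mathbb{A}}$. This is a free $\U^-_{\mathbb{A}}$-module whose extension of scalars to $\C(q^{\frac12})$ recovers $\widetilde{M}^{\prime,q}_{(\zeta,\lambda)}$. The imaginary PBW basis of Theorem \ref{IPBW}, restricted to $D^0$, supplies $\mathbb{A}$-bases.

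As in Propositions 4.6 and 4.7 of \cite{FHW15}, the limit modulo $\mathbb{J}$ and the ideal $(K_i-1,\gamma^{\frac12}-1)$ is a $\U(\tilde{\g})$-module isomorphic to $\widetilde{M}'_{\bar\zeta,\lambda}=\mathrm{Ind}_{\widetilde{\P}}^{\tilde\g}M'_{\bar\zeta,\lambda}$. The point is that $\U^-_{\mathbb{A}}$ specializes to $\U(\g_-\otimes\C[t^{\pm1}])$ and the lattice $(M^{(\prime,q)}_{\zeta,a})_{\mathbb{A}}$ specializes to $M'_{\bar\zeta,\lambda}$.

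\textbf{Step 3 (conclusion).} Let $W$ be a nonzero proper submodule of $\widetilde{M}^{\prime,q}_{(\zeta,\lambda)}$, and set $W_{\mathbb{A}}=W\cap(\widetilde{M}^{\prime,q}_{(\zeta,\lambda)})_{\mathbb{A}}$. Since $\mathbb{A}$ is a localization of a polynomial ring, we can clear denominators and divide by powers of $q^{\frac12}-1$. This makes $W_{\mathbb{A}}$ a saturated sublattice: nonzero, not everything, and with $W_{\mathbb{A}}\cap \mathbb{J}L = \mathbb{J}W_{\mathbb{A}}$, where $L$ denotes the full lattice.

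Its image in the classical limit is then a nonzero proper $\tilde\g$-submodule of $\widetilde{M}'_{\bar\zeta,\lambda}$. This contradicts Theorem \ref{da}.

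The main obstacle is Step 2 together with saturation. The lattice must be compatible with the $\U'_{\mathbb{A}}$-action, since the Drinfeld generators $x^+_{ik}$ move $h_{i,-r}$ through commutators involving $\gamma^{\pm|k|/2}$. The specialization must also be faithful, so that a proper sublattice stays proper mod $\mathbb{J}$. For both, we would rely on the imaginary PBW basis and the flatness results of \cite{CFM15}, which give freeness of $\U^-_{\mathbb{A}}$ and hence a dimension-preserving specialization weight space by weight space, and on the $\h$-weight grading, which reduces saturation to finite-rank free $\mathbb{A}$-modules.
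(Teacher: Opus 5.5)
Your route is the paper's: build $\mathbb{A}$-forms of $\U_q([\ag,\ag])$ and of the induced module, specialize at $q^{\frac12}=1$, and contradict Theorem \ref{da}. Your Steps 1--2 are a faithful and more careful expansion of its three-line sketch.

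\textbf{The gap: properness in Step 3.} The paper simply asserts that the classical limit of $W_{\mathbb{A}}$ is proper. You justify it by reducing to finite-rank weight spaces, and that reduction is unavailable. The top weight space is $1\otimes M^{(\prime,q)}_{\zeta,a}$, which has infinite rank with basis $\{h_-^pv\}$. The lower weight spaces contain $x^-_{ik}\otimes h_-^pv$ for all $k\in\Z$.

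In infinite rank, saturation does not give properness modulo $\mathbb{J}$. For example, let $\pi=q^{\frac12}-1$. The saturated proper sublattice $\{\sum_i c_ie_i:\sum_i c_i\pi^{-i}=0\}$ of $\bigoplus_{i\geq0}\mathbb{A}e_i$ contains every $e_i-\pi e_{i+1}$, so it surjects modulo $\pi$.

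The missing input is Proposition \ref{ired1}:
\begin{itemize}
\item $W\cap(1\otimes M^{(\prime,q)}_{\zeta,a})$ is an $\mathcal{H}'_q$-submodule of an irreducible module.
\item It cannot contain $v$, since otherwise $W$ would be everything; hence it is $0$.
\item $L$ is spanned by weight vectors, so $W_{\mathbb{A}}$ is the sum of its weight components.
\item Therefore the image of $W_{\mathbb{A}}$ in the classical limit has zero $\lambda$-weight space, while $\overline{L}_\lambda\cong M'_{\bar\zeta,\lambda}\neq0$.
\end{itemize}
This gives properness. Nonvanishing of the image follows, as you say, by dividing out the largest power of $q^{\frac12}-1$.

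\textbf{Two corrections to Step 1.} First, integrality of $\zeta$ is a genuine extra hypothesis, tacit in the paper's sketch as well. It cannot be arranged by rescaling, because the $h_{ir}$ are fixed generators of $\U'_{\mathbb{A}}$. Suppose $\zeta(h_{ir})$ has a pole of order $k\geq1$ at $q^{\frac12}=1$. Write $(q^{\frac12}-1)^k\zeta(h_{ir})=g/h$ with $g,h\in\C[q^{\pm\frac12}]$ and $g(1)h(1)\neq0$. Then for every $fv\in L$,
$$g\,fv=(q^{\frac12}-1)^k h\,(h_{ir}fv)\in\mathbb{J}L .$$
Since $g$ acts on $L/\mathbb{J}L$ by the nonzero scalar $g(1)$, every multiple of $v$ lying in $L$ dies in the classical limit. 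So the limit cannot be $\widetilde{M}'_{\bar\zeta,\lambda}$, and the argument only covers $\zeta$ regular at $q^{\frac12}=1$.

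Second, drop the requirement $\bar\zeta\neq0$ and the \emph{clearing powers of $q^{\frac12}-1$ from $\zeta$} step, which changes the module. For $a\neq0$, $M'_{\bar\zeta,\lambda}$ is irreducible for every $\bar\zeta$, including the Fock module $\bar\zeta=0$. That irreducibility is all Theorem \ref{da} needs.
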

\begin{proof}
   We can  construct $\A$-form of $\U_q[\ag,\ag]$ and define $\A$-form of $\widetilde{M}_{(\zeta,\lambda)}^{\prime,q}$. Denote it by ${\widetilde{M}_{(\zeta,\lambda)_{\A}}^{\prime,q}}$ and let $\overline{\widetilde{M}_{(\zeta,\lambda)}^{\prime,q}}_{\A}$ be its classical limits. Any proper submodule of $\widetilde{M}_{(\zeta,\lambda)}^{\prime,q}$ will have a nonzero $\A$-form and the corresponding classical limit will be a proper submodule of  $\overline{\widetilde{M}_{(\zeta,\lambda)}^{\prime,q}}_{\A}$, which contradicts Theorem \ref{da}.
\end{proof}
\section{Proof of the conjecture for $ \widehat{M}_{\zeta,\lambda}^q$.}
In this section, we focus on the irreducibility of the induced module $\widehat{M_{\zeta,\lambda}^q}$ where the derivation $D$ acts freely. As noted in Remark \ref{rmk1}, the classical limit techniques utilized in Section \ref{aform} are not applicable here due to the nature of the $D$ action. Consequently, we provide a direct proof for the case where $\qag$ is the quantum affine algebra of type $A_1^{(1)}$. 

\medskip

The proof strategy relies on a detailed analysis of the commutation relations between Drinfel'd generators and the imaginary PBW basis. We first establish several technical lemmas regarding the adjoint action of the generators $\psi_r$ and $\phi_{-r}$
 on the root vectors. These identities allow us to show that any non-zero submodule must eventually contain an element of $M_{\zeta,a}^{q}$ and eventually it contains the highest \emph{imaginary Whittaker vector}.
 

\begin{lemma}[\cite{B94}, Prop. 3.10, Lemma 3.15]\label{psi}
    We set $a_1=q^2\gamma^{\frac{1}{2}},\; b_1=q^2\gamma^{-\frac{1}{2}}$ and let $r>0$ and $m\in \Z$. Then we have the following equalities:
    \[[\psi_r, x_m^-]=-\gamma^{\frac{1}{2}}[2]\Bigl (a_1^{r-1}x_{m+r}^{-}+\sum_{k=1}^{r-1}a_1^{k-1}(q-q^{-1})\psi_{r-k}x^-_{m+k}\Bigl)\]
     \[[\psi_r, x_m^+]=\gamma^{-\frac{1}{2}}[2]\Bigl(b_1^{r-1}x_{m+r}^{+}+\sum_{k=1}^{r-1}b_1^{k-1}(q-q^{-1})x^+_{m+k}\psi_{r-k}\Bigl).\]
    
\end{lemma}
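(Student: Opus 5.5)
We will derive both identities from the Drinfeld relations for $\U_q(A_1^{(1)})$, where $\dot I=\{1\}$, $a_{11}=2$, and we drop the index $i$. The primary relation is
\[[h_k,x^{\pm}_l]=\pm\frac{1}{k}[2k]\gamma^{\pm|k|/2}x^{\pm}_{k+l}.\]
It is convenient to encode it with generating series. Set
\[\psi(z)=\sum_{r\ge0}\psi_r z^r=K\exp\Bigl((q-q^{-1})\sum_{l>0}h_l z^l\Bigr),\qquad x^-(w)=\sum_{m\in\Z}x^-_m w^{-m}.\]
From the $[h_k,x^-_l]$ relation one computes $\operatorname{ad}(\sum_{l>0}h_l z^l)$ acting on $x^-(w)$. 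This is multiplication by $-\sum_{k>0}\frac{[2k]}{k}\gamma^{k/2}(z/w)^k$, which exponentiates to a rational function. We thereby obtain the standard relation
\[\psi(z)\,x^-(w)\,\psi(z)^{-1}=g(z\gamma^{1/2}/w)^{-1}x^-(w),\qquad g(u)=\frac{q^{2}-u}{1-q^{2}u}\]
(up to the normalization conventions of \cite{B94}). In particular $\psi(z)x^-(w)=f(z/w)\,x^-(w)\psi(z)$ for an explicit rational $f$. The analogous relation for $x^+$ has $\gamma^{-1/2}$ in place of $\gamma^{1/2}$ and the inverse function. This is exactly why $b_1$ appears in place of $a_1$ and the sign flips.

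Next we clear denominators. Multiply the relation by $(1-q^{2}\gamma^{1/2}z/w)$, or the appropriate linear factor, and compare coefficients of $z^r w^{-m-\cdot}$ for $r>0$. This yields a first-order recursion of the form
\[[\psi_r,x^-_m]-a_1[\psi_{r-1},x^-_{m+1}]=(\text{explicit terms in }\psi_{r-1}x^-_{m+1}\text{ and }x^-_{m+1}\psi_{r-1}).\]
After rearranging, it reads
\[[\psi_r,x^-_m]=a_1[\psi_{r-1},x^-_{m+1}]-\gamma^{1/2}[2](q-q^{-1})\psi_{r-1}x^-_{m+1}.\]
The base case $r=1$ follows directly from $\psi_1=K(q-q^{-1})h_1$ and the $[h_1,x^-_m]$ relation, using $Kx^-_{m+1}=q^{-2}x^-_{m+1}K$. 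This gives $[\psi_1,x^-_m]=-\gamma^{1/2}[2]x^-_{m+1}$ after absorbing $K$ correctly.

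We then unwind the recursion by induction on $r$. Iterating $r-1$ times produces the leading term $a_1^{r-1}x^-_{m+r}$ from the base case. Each step $k$ contributes $a_1^{k-1}(q-q^{-1})\psi_{r-k}x^-_{m+k}$ with the common prefactor $-\gamma^{1/2}[2]$, which is the claimed formula. The $x^+$ identity follows the same way. Alternatively, it can be transported via the anti-automorphism $\Omega$ (or $\Phi$), which swaps $x^\pm$ and inverts $\gamma$ and $q$. In that case we must track that $\psi$ moves to the right of $x^+$, as in the stated formula.

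The main obstacle will be bookkeeping of normalizations. This covers the powers of $K$ hidden in $\psi_r$ (the $q^{\pm2}$ commutation with $x^\mp$ is what produces $a_1=q^2\gamma^{1/2}$), the $\gamma^{\pm1/2}$ factors, and the exact form of $g$ in the conventions here. I would first verify the cases $r=1$ and $r=2$ by direct expansion of the exponential ($\psi_2=K((q-q^{-1})h_2+\tfrac{(q-q^{-1})^2}{2}h_1^2)$) to pin down constants before running the induction.
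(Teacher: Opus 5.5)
Your overall strategy (exponentiate the adjoint action of $\sum_{l>0}h_lz^l$, clear the denominator, get a first-order recursion, unwind) is sound and differs from the paper, which only cites Beck. However, there is a genuine gap: the normalization of $\psi_r$, and this is not mere bookkeeping. For $\psi(z)=K\exp\bigl((q-q^{-1})\sum_{l>0}h_lz^l\bigr)$, as you take it, the lemma is false.

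Indeed $\psi_1=(q-q^{-1})Kh_1$, and since $x^-_mK=q^{2}Kx^-_m$,
\[
[\psi_1,x^-_m]=(q-q^{-1})K\bigl([h_1,x^-_m]+(1-q^{2})x^-_mh_1\bigr).
\]
This carries a factor $K$, an extra factor $q-q^{-1}$, and a term $Kx^-_mh_1$, so it is not a multiple of $x^-_{m+1}$; the $K$ cannot be ``absorbed''. Similarly, your relation $\psi(z)x^-(w)\psi(z)^{-1}=g(u)^{-1}x^-(w)=\frac{q^{-2}-u}{1-q^{-2}u}\,x^-(w)$ is correct. But after clearing denominators it only yields the twisted identity
\[
\psi_rx^-_m-q^{-2}x^-_m\psi_r=\gamma^{1/2}\bigl(q^{-2}\psi_{r-1}x^-_{m+1}-x^-_{m+1}\psi_{r-1}\bigr),
\]
not the commutator recursion you wrote. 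Conjugation by $K$ only contributes the overall $q^{-2}$; it is not the source of $a_1$.

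The identities hold exactly for $\psi_r$ normalized by
\[
1+(q-q^{-1})\sum_{r>0}\psi_rz^r=\exp\Bigl((q-q^{-1})\sum_{l>0}h_lz^l\Bigr),
\]
i.e.\ with $K$ removed and divided by $q-q^{-1}$. You also need the relation in the form $[h_k,x^\pm_l]=\pm\frac{[2k]}{k}\gamma^{\mp|k|/2}x^\pm_{k+l}$. The version you wrote, copied from the paper's display, has $\gamma^{\pm|k|/2}$, which would swap $\gamma^{1/2}$ and $\gamma^{-1/2}$ in the result; your computation silently used the correct sign.

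With $x^-(w)=\sum_m x^-_mw^{-m}$ the variable is $u=\gamma^{1/2}zw$ (not $z/w$), and
\[
\exp\Bigl((q-q^{-1})\operatorname{ad}\sum_{l>0}h_lz^l\Bigr)x^-(w)=\frac{1-q^{2}u}{1-q^{-2}u}\,x^-(w).
\]
Comparing coefficients gives exactly your recursion for $r\ge 2$. The base case is $[\psi_1,x^-_m]=[h_1,x^-_m]=-\gamma^{1/2}[2]x^-_{m+1}$, and your unwinding then yields the stated formula, with $a_1=q^2\gamma^{1/2}$ coming from the zero of $1-q^2u$.

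For $x^+$ the factor is $\frac{1-q^{-2}v}{1-q^{2}v}$ with $v=\gamma^{-1/2}zw$. This gives $[\psi_r,x^+_m]=b_1[\psi_{r-1},x^+_{m+1}]+\gamma^{-1/2}[2](q-q^{-1})x^+_{m+1}\psi_{r-1}$ for $r\ge2$, which unwinds to the second formula.

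Drop the alternative route via $\Omega$ or $\Phi$. Both reverse the $D$-grading, so they send $\psi_r$ into degree $-r$ (to $\phi_{-r}$). They therefore produce the $\phi$-relations of the paper's next lemma, not $[\psi_r,x^+_m]$.
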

The  identity in Lemma \ref{psi} describe the action of positive generators $\psi_r$ on the real root vectors. By applying the algebra anti-automorphism $\Omega$ we obtain the corresponding relations for the negative generators $\phi_{-r}$. This symmetry allows us to describe the adjoint action of the negative part of  $\phi_{-r}$ as follows

 
\begin{lemma}

Setting $c=a_1^{-1},\; d=b_1^{-1}$, with $a_1,b_1$ as above, we have 
\[[\phi_{-r}, x_{-m}^+]=\gamma^{-\frac{1}{2}}[2](c^{r-1}x^{+}_{-m-r}-\sum_{k=1}^{r-1}c^{k-1}(q-q^{-1})x^{+}_{-m-k}\phi_{k-r}),\]

\[[\phi_{-r},x_m^-]=-\gamma^{\frac{1}{2}}[2](d^{r-1}x_{m-r}^{-1}-\sum_{k=1}^{r-1}d^{(k-1)}(q-q^{-1})\phi_{k-r}x_{m-k}^{-}).\]
     
\end{lemma}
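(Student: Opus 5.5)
The plan is to obtain both identities from Lemma~\ref{psi} by applying the anti-automorphism $\Omega$, as the paper indicates. Before doing so I will record how $\Omega$ acts on the Drinfeld generators. $\Omega$ is defined on the Drinfeld--Jimbo generators, and the Drinfeld generators are built from them through the braid group action. Since $\Omega T_i=T_i\Omega$, this gives
\[\Omega(x^{\pm}_m)=x^{\mp}_{-m},\qquad \Omega(\gamma^{\pm\frac12})=\gamma^{\mp\frac12},\qquad \Omega(q)=q^{-1}.\]
For $K$, $\Omega(K)=K^{-1}$, and $\Omega(h_r)=h_{-r}$ up to the standard normalisation. Feeding these into the generating series defining $\psi$ and $\phi$, the factor $(q-q^{-1})$ goes to $-(q-q^{-1})$ and $K$ goes to $K^{-1}$. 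This yields $\Omega(\psi_r)=\phi_{-r}$. I will check this first on the series $\sum_k\psi_kz^k$, since the sign conventions in $\exp(\pm(q-q^{-1})\sum h z^{\pm l})$ are exactly what make it work.

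For the first identity, apply $\Omega$ to the second formula of Lemma~\ref{psi}, taken at index $m$. Because $\Omega$ is an anti-automorphism, the commutator reverses:
\[\Omega([\psi_r,x^+_m])=[\Omega(x^+_m),\Omega(\psi_r)]=-[\phi_{-r},x^-_{-m}].\]
On the right-hand side I will use the following substitutions:
\[\gamma^{-\frac12}\mapsto\gamma^{\frac12},\qquad [2]\mapsto[2]\ (\text{it is invariant under } q\mapsto q^{-1}),\qquad b_1=q^2\gamma^{-\frac12}\mapsto q^{-2}\gamma^{\frac12}.\]
Also $x^+_{m+k}\psi_{r-k}$ maps to $\phi_{k-r}x^-_{-m-k}$, with the order reversed.

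Comparing with the claimed formulas, $q^{-2}\gamma^{\frac12}$ should be identified with $d=b_1^{-1}$ or $c=a_1^{-1}$. Here I must be careful, because $\gamma^{\pm\frac12}$ is central: $q^{-2}\gamma^{\frac12}$ equals $b_1^{-1}$ exactly, since $b_1^{-1}=q^{-2}\gamma^{\frac12}$. This matches the second stated formula after replacing $-m$ by $m$, and the sign of the sum is flipped because $(q-q^{-1})\mapsto -(q-q^{-1})$. The first formula, for $[\phi_{-r},x^+_{-m}]$, comes in the same way from the first identity of Lemma~\ref{psi}, with $a_1\mapsto q^{-2}\gamma^{-\frac12}=a_1^{-1}=c$.

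The main obstacle is bookkeeping: getting the overall sign and the power of $\gamma^{\pm\frac12}$ exactly right. It comes from three sources: the commutator reversal, the $\gamma$ inversion, and the $q\mapsto q^{-1}$ effect on $(q-q^{-1})$ and on any $q$-power normalisation of $x^\pm$ under $\Omega$. If a discrepancy appears, my first check will be low-order cases directly from the Drinfeld relations. At $r=1$, $\phi_{-1}=-(q-q^{-1})K^{-1}h_{-1}$, and $[h_{-1},x^\pm_m]$ is given explicitly, so the leading term can be confirmed and the conventions fixed. The sum terms then follow from the same computation, or by induction on $r$ using the recursion for $\phi_{-r}$ in terms of $h_{-l}$.
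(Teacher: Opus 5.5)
Your proposal is correct and is the paper's own argument: apply the anti-automorphism $\Omega$ to the two identities of Lemma~\ref{psi}, using $\Omega(x^{\pm}_m)=x^{\mp}_{-m}$, $\Omega(\psi_r)=\phi_{-r}$, $\Omega(a_1)=c$, $\Omega(b_1)=d$ and $\Omega(q-q^{-1})=-(q-q^{-1})$, together with the reversal of commutators and products; your bookkeeping of signs and $\gamma$-powers is right. The only slip is a label: your opening line says the second formula of Lemma~\ref{psi} gives the first identity, but it actually produces the second stated formula (for $x^-_m$, after $-m\mapsto m$), and the first produces the first, as you yourself conclude afterwards.
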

In the above Lemma, the first commutator is a linear combination of monomials with right orders as in the imaginary PBW basis (Theorem \ref{IPBW}), but the second commutator is not in the right order. In the next proposition, we will present this commutator in the right order.
For this, we first define the formal sum operator for any two numbers $r,j\in \N$ with $j<r$: 
\[\mathbb{S}_{j}^r=\sum_{k_1=1}^{r}\sum_{k_2=1}^{r-k_1}\sum_{k_3=1}^{r-(k_1+k_2)}\dots \sum_{k_j=1}^{r-\sum_{e=1}^{j-1}k_e}.\]
Rewriting the second commutator as a linear combination of monomials with the right orders of the imaginary PBW basis we have the following lemma: 

\begin{lemma}\label{phic}
    Set $t=\gamma^{\frac{1}{2}}[2](q-q^{-1})$ and take $r>0$ and $m\in \Z$. Then we have the following:
     \[[\phi_{-r},x_m^-]
 =-(\sum_{j=1}^{r}\frac{t^j}{q-q^{-1}}d^{r-j}\s^{r-1}_{j-1}(1))x_{m-r}^{-}+\sum_{j=1}^{r-1}t^j\s^{r-1}_j(d^{k_1+\dots k_j-{r+j}}x^{-}_{m-k_1-\cdots -k_j}\phi_{k_1+k_2\cdots +k_j-r}).\]
\end{lemma}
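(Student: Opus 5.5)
We prove Lemma \ref{phic} by strong induction on $r$, starting from the second identity of the preceding lemma (the $\Omega$-image of Lemma \ref{psi}). That identity reads
\[
[\phi_{-r},x_m^-]=-\gamma^{\frac12}[2]\,d^{r-1}x^-_{m-r}+t\sum_{k=1}^{r-1}d^{k-1}\phi_{k-r}x^-_{m-k},
\qquad t=\gamma^{\frac12}[2](q-q^{-1}).
\]
The first term is already in PBW order, and $-\gamma^{\frac12}[2]=-t/(q-q^{-1})$. So it supplies the $j=1$ summand of the first sum, since $\mathbb{S}^{r-1}_0(1)=1$. The only work is to reorder each product $\phi_{k-r}x^-_{m-k}$ with $1\le k\le r-1$. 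Writing $r'=r-k<r$, we use
\[
\phi_{-r'}x^-_{m-k}=x^-_{m-k}\phi_{-r'}+[\phi_{-r'},x^-_{m-k}],
\]
and replace the commutator by the inductive hypothesis for $r'$. Since $\gamma^{\frac12}$ is central, all scalar factors $t,d$ commute past everything. The base case $r=1$ is immediate: the sum over $k$ is empty and the formula reduces to $-\gamma^{\frac12}[2]x^-_{m-1}$.

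Substituting, each term $t\,d^{k-1}x^-_{m-k}\phi_{k-r}$ contributes to the $j=1$ summand of the second sum. Here the exponent of $d$ should be checked against the claimed normalization $d^{k_1-r+1}$; if the exponents disagree, the convention for $d$ must be tracked carefully, and we record this as a point to verify. Each term of the inductive expansion of $[\phi_{-r'},x^-_{m-k}]$ gets multiplied by $t\,d^{k-1}$, which raises the $t$-degree by one and prepends a new summation index $k_1=k$.

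\begin{itemize}
\item The first-sum part of the inductive formula for $r'$ produces terms proportional to $x^-_{m-k-r'}=x^-_{m-r}$. These have coefficients $t^{j+1}d^{k-1}d^{r'-j}\,\mathbb{S}^{r'-1}_{j-1}(1)/(q-q^{-1})$.
\item The second-sum part produces ordered monomials $x^-_{m-k-k_2-\cdots-k_{j+1}}\phi_{k+k_2+\cdots+k_{j+1}-r}$, carrying the matching powers of $t$ and $d$.
\end{itemize}

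What remains is combinatorial bookkeeping. We must show that summing over $k$ from $1$ to $r-1$ reassembles the nested sums exactly. The key identity is
\[
\sum_{k_1=1}^{r-1}\mathbb{S}^{r-1-k_1}_{j-1}=\mathbb{S}^{r-1}_{j},
\]
which is the recursive structure built into the definition of $\mathbb{S}$. We also need the $d$-exponents to combine as $(k_1-1)+(r'-j)=r-(j+1)$ for the $x^-_{m-r}$ terms, and to give the stated exponent for the $\phi$-terms. For the first sum, the counting $\sum_{k_1}\mathbb{S}^{r-k_1-1}_{j-1}(1)=\mathbb{S}^{r-1}_j(1)$ matches the coefficient of $t^{j+1}$. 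The upper limit $j\le r-1$ in the second sum reflects that $\phi_{\cdot}$ must have strictly negative index, with $k_1+\dots+k_j\le r-1$. The terms where $k_1+\dots+k_j$ reaches $r$ would involve $\phi_0$, and these should be absorbed into the first sum; this boundary identification needs a careful check.

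The main obstacle is precisely this index bookkeeping: consistently matching the ranges of the nested sums $\mathbb{S}$ and the exponents of $d$, including the boundary case just mentioned. We would verify it first for $r=2$ and $r=3$ by explicit expansion, and then run the general induction.
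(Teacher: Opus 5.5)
Your method is the right one, and it is the reordering the paper itself has in mind. The paper gives no argument beyond ``rewriting the second commutator in PBW order.'' Your handling of the first sum is also correct. The gap is the check you postponed, and it fails.

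Look at the coefficient of the ordered monomial $x^-_{m-1}\phi_{1-r}$ for $r\ge 3$. It comes only from the direct term $k=1$, namely $t\,d^{0}x^-_{m-1}\phi_{1-r}$. Every term produced by the inductive hypothesis applied to $[\phi_{-(r-k)},x^-_{m-k}]$ has $x^-$-index at most $m-2$. So that coefficient is $t$. The statement, however, assigns it $t\,d^{2-r}$. No ``convention for $d$'' rescues this, because $d=b_1^{-1}=q^{-2}\gamma^{1/2}$ is a fixed central element with $d^{r-2}\neq 1$. Concretely, for $r=3$ your induction gives
\[
[\phi_{-3},x^-_m]=-\frac{td^2+2t^2d+t^3}{q-q^{-1}}\,x^-_{m-3}+t\,x^-_{m-1}\phi_{-2}+(td+t^2)\,x^-_{m-2}\phi_{-1}.
\]
The displayed formula instead gives $t\,d^{-1}x^-_{m-1}\phi_{-2}+(t+t^2d)\,x^-_{m-2}\phi_{-1}$ for the last two terms. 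The stated exponent is not even stable under your inductive step. Feeding $d^{k_2+\cdots+k_{j+1}-(r-k_1)+j}$ into the step and multiplying by $d^{k_1-1}$ overshoots the claimed exponent $k_1+\cdots+k_{j+1}-r+j+1$ by $k_1-2$. So the lemma as printed cannot be derived from the preceding lemma, and the point you ``record to verify'' is exactly where the proposal breaks.

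What your induction does prove is the version with $d^{k_1+\cdots+k_j-j}$ in the second sum. Using $\mathbb{S}^{R}_{J}(1)=\binom{R}{J}$ and counting compositions, this is equivalently the closed form
\[
[\phi_{-r},x^-_m]=-\frac{t(t+d)^{r-1}}{q-q^{-1}}\,x^-_{m-r}+\sum_{K=1}^{r-1}t(t+d)^{K-1}\,x^-_{m-K}\phi_{K-r}.
\]
In this form the whole inductive step is the telescoping identity $t\sum_{k=1}^{K-1}d^{k-1}(t+d)^{K-1-k}=(t+d)^{K-1}-d^{K-1}$, applied with $K=r$ for the $x^-_{m-r}$ term and with $K<r$ for the $\phi$-terms. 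This replaces all the nested-sum bookkeeping. Your worry about $\phi_0$ is unfounded. The second sum for $r-k$ has $K'\le r-k-1$, so $k+K'\le r-1$, and the $x^-_{m-r}$ terms arise only from the first sums. The only feature of the lemma used later also holds for the corrected formula: every $\phi$ appearing on the right has length strictly less than $r$.
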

Let us call $r$ the length of $\phi_{-r}$.
This lemma shows that the commutators will give  new $\phi$'s component with a smaller length than $\phi_{-r}$.

From now on we will use the following convention: if 
some expression contains a word with a hat, then this word is removed from the expression. 
\begin{proposition}\label{qprop}
  Let $s\in \N_{>0}$ with $m_1,\cdots ,m_s\in \Z$ and $k\in \Z$ such that $k+m_j<0$ for all $j=1,2,\dots, s$. Then we have
\[x_k^+x_{m_1}^{-}\cdots x_{m_s}^{-}=x_{m_1}^{-}\cdots x^-_{m_s}x_k^+-\sum_{j=1}^s\frac{1}{q-q{-1}}\gamma^{\frac{m_j-k}{2}}x_{m_1}^-\cdots \hat{x}_{m_j}^-\cdots x_{m_s}^{-}\phi_{m_j+k}\]
\[-\sum_{j=1}^s\sum_{r=j+1}^sx_{m_1}^-\cdots \hat{x}_{m_j}^{-}\dots [\phi_{m_j+k},x_{m_r}^{-}]\cdots x_{m_s}^{-}.\]
\end{proposition}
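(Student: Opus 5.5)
We will argue by induction on $s$, moving $x_k^+$ to the right one factor at a time. The only relation needed is the Drinfeld commutator
\[
[x_k^+,x_m^-]=\frac{1}{q-q^{-1}}\Bigl(\gamma^{\frac{k-m}{2}}\psi_{k+m}-\gamma^{\frac{m-k}{2}}\phi_{k+m}\Bigr)
\]
(type $A_1^{(1)}$, so the index $i$ is suppressed). The hypothesis $k+m_j<0$ gives $\psi_{k+m_j}=0$, because $\psi_n$ is defined only through the power series in $z^{n}$ with $n\ge 0$. Hence
\[
x_k^+x_{m_j}^-=x_{m_j}^-x_k^++[x_k^+,x_{m_j}^-],\qquad [x_k^+,x_{m_j}^-]=-\tfrac{1}{q-q^{-1}}\gamma^{\frac{m_j-k}{2}}\phi_{m_j+k}.
\]
The sign of the exponent of $\gamma$ must be tracked against the displayed formula. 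Since $\gamma$ is central, its position in the expression does not matter.

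For $s=1$ this relation is exactly the claim, and the double sum is empty. For the inductive step we use the Leibniz expansion
\[
x_k^+x_{m_1}^-\cdots x_{m_s}^-=x_{m_1}^-\cdots x_{m_s}^-x_k^++\sum_{j=1}^s x_{m_1}^-\cdots x_{m_{j-1}}^-\,[x_k^+,x_{m_j}^-]\,x_{m_{j+1}}^-\cdots x_{m_s}^-.
\]
This follows by telescoping, or equivalently from the inductive hypothesis applied to $x_k^+x_{m_2}^-\cdots x_{m_s}^-$.

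Inside the $j$-th summand we substitute the $\phi$-expression and then push $\phi_{m_j+k}$ to the far right through $x_{m_{j+1}}^-,\dots,x_{m_s}^-$. The $\phi_{m_j+k}$ (with prefactor $-\frac{1}{q-q^{-1}}\gamma^{\frac{m_j-k}{2}}$) passes each $x_{m_r}^-$, $r>j$, through one more Leibniz expansion:
\[
\phi\, x_{m_{j+1}}^-\cdots x_{m_s}^-=x_{m_{j+1}}^-\cdots x_{m_s}^-\phi+\sum_{r=j+1}^s x_{m_{j+1}}^-\cdots [\phi,x_{m_r}^-]\cdots x_{m_s}^-.
\]
The leading term produces the first sum of the statement. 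The commutator terms produce the double sum, where $x_{m_j}^-$ is hatted because it has been absorbed into $\phi_{m_j+k}$ and the commutator replaces the slot $x_{m_r}^-$. The scalar $\frac{1}{q-q^{-1}}\gamma^{\frac{m_j-k}{2}}$ multiplies these terms as well. The statement as displayed omits this factor in the double sum, so we will either absorb it into the convention or record it explicitly. The commutators $[\phi_{m_j+k},x_{m_r}^-]$ are left unexpanded here; Lemma~\ref{phic} rewrites them later in PBW order.

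The main obstacle is bookkeeping rather than any conceptual point. We have to verify that $\psi_n=0$ for $n<0$ with the paper's conventions, and fix the signs and the $\gamma$-exponents against the Drinfeld relation as normalized in the preliminaries. We also have to check the placement of the scalar prefactor in the double sum. We will confirm all of this by computing the case $s=2$ explicitly, and the general case then follows from the two Leibniz expansions above.
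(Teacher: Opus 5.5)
Your argument is correct and is the same as the paper's, whose proof is only ``a direct consequence of the continuous use of commutators''. Concretely, you expand $[x_k^+,x_{m_1}^-\cdots x_{m_s}^-]$ by the Leibniz rule, use $[x_k^+,x_m^-]=-\frac{1}{q-q^{-1}}\gamma^{\frac{m-k}{2}}\phi_{k+m}$ (which follows from $\psi_{k+m}=0$ when $k+m<0$), and then push each $\phi_{m_j+k}$ to the right.

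You are also right that the terms of the double sum must carry the factor $\frac{1}{q-q^{-1}}\gamma^{\frac{m_j-k}{2}}$: already for $s=2$ the displayed identity is off by exactly this scalar in front of $[\phi_{m_1+k},x_{m_2}^-]$. So state the factor explicitly in the corrected identity rather than absorbing it into a convention.
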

\begin{proof}
    A direct consequence of the continuous use of commutators. 
\end{proof}
Note that in the last sum we will have more monomials when we write in the right order of the PBW monomials. Hence, applying Lemma \ref{phic} we get more monomials, but if we have any new $\phi$'s in those sums, then their lengths must be smaller than the maximum length of $\phi_{m_j+k}$ for $1\leq j\leq s$. \\
\medskip
Given any $t\in \N_{>0}$, we define the set $\mathbb{M}_t:=\{(m_1,\cdots, m_t)\in \Z^t: m_1\leq m_2\leq \dots \leq m_t\}$.\\

\textbf{Proof of the conjecture for $A_1^{(1)}$:}\label{Proof}
We see that the module is a $K$-weight module over $\U_q(A_1^{(1)})$ with the weight set $q^{\lambda(K)-2\N}$. Hence by \cite[Proposition 3.2.1]{HK}, we see that any submodule is also a $K$-weight module. So, it is sufficient to prove that any submodule  contains an element of a $K$-weight $q^{\lambda(K)}$. 
Any element of a submodule of weight $q^{\lambda(K)-2t}$ looks as follows 
\[v=\sum_{\mathbf{m}=(m_1,\dots ,m_t)\in  {I}_v}x^{-}_{m_1} \cdots {x_{m_t}}^{-}\otimes v_{ \mathbf{m}},\]
where $I_v$ is a finite subset of $\mathbb{M}_t$. Given any $\mathbf{m}=(m_1,\dots, m_t)\in \mathbb{M}_t$ there exists $n_{\mathbf{m}}\in \N$ such that $v_{\mathbf{m}}\in \C(q^{\frac{1}{2}})[h_{r}:-n_m\leq r<0]\otimes \C(q^{\frac{1}{2}})[D^{\pm1}].$ Now we choose $k\in \Z_{<0}$ such that $k+m_j<-N$, with $N=\text{Max}\{n_{\mathbf{m}}:\mathbf{m}\in I_v\}$, this will ensure us that given any $\mathbf{m}\in I_v$ and $m_j$ being $j$-th co-ordinate of $\mathbf{m}$, $\phi_{k+m_j}$ will act as left multiplication.  
Now we get

\[x^+_k.v=-\frac{1}{q-q^-}\sum_{\mathbf{m}\in I_v}\sum_{j=1}^tq^{(\frac{m_j-k}{2})l} x_{m_1}^{-}\cdots \hat{x}_{m_j}^-\cdots x_{m_t}^-\phi_{k+m_j}\otimes v_{\mathbf{m}}-\]
\[-\sum_{\mathbf{m}\in I_v}\sum_{j=1}^t\sum_{r=j+1}^tx_{m_1}^{-}\cdots \hat{x}_{m_j}^-\cdots [\phi_{m_j+k},x_{m_r}^-]\cdots x_{m_s}^{-}\otimes v_{\mathbf{m}}.\]
It is easy to see that the first sum is nonzero and $x_{k}^+.v$ is a nonzero weight vector of $K$-weight $q^{\lambda(K)-2(t-1)}$. Hence continuing the process we get the desired result.

\section{Acknowledgments}
The second author would like thank Luan Bezerra and Xingpeng Liu for some insightful discussion about quantum groups. 
\noindent V. Futorny is partially supported by the NSF of China (12350710787 and 12350710178).


\begin{thebibliography}{100}{
    \bibitem[ALZ16]{ALZ16} D. Adamovic, R. Lu, K. Zhao, \emph{Whittaker modules for the affine Lie algebra $A_1^{(1)}$}, Adv. Math, {\bf 289}, 2016, 438-479.
    \bibitem[BM11]{BM11} P. Batra, V. Mazorchuk, \emph{Blocks and modules for Whittaker pairs,}  J. Pure. Appl. Algebra {\bf 215 (7)} (2011) 1552-1568.
    \bibitem[B94]{B94} J. Beck, \emph{Braid group action and quantum affine algebras,} Comm. Math. Phys., 165(3) (1994) 555-568.
    \bibitem[BO09]{BO09} G. Benkart, M. Ondrus, {\sl Whittaker modules for generalized Weyl algebras}, Represent. Theory 13 (2009) 141-164.
    \bibitem[BK78]{BK78} Kostant Bertram, \emph{On Whittaker vectors and representation theory},  Invent Math 48, 101–184 (1978).
    \bibitem[BCFK22]{BCFK22} L. Bezerra, L. Calixto, V. Futorny, I. Kashuba, \emph{Representations of affine Lie superalgebras and their quantization in type A}, J. Algebra 611 (2022), 320-340.
    \bibitem[C08]{C08} K. Christodoulopoulou,  \emph{Whittaker modules for Heisenberg algebras and imaginary Whittaker modules for affine Lie algebras,}  J. Algebra 320 (2008), no. 7, 2871–2890.
    \bibitem[CFM15]{CFM15} B. Cox, V. Futorny, K. Misra, \emph{An imaginary PBW basis for quantum affine algebras of type 1}, J. Pure Appl. Algebra {\bf 2019} (2015), no. 1, 83-100.
    \bibitem[CF23]{CF23} M. C. Cardoso, V. Futorny, \emph{Affine Lie algebra representations induced from Whittaker modules}, Proc. Amer. Math. Soc. 151 (2023), no. 3, 1041–1053.
    \bibitem[CC21]{CH21} C. Chen, {\sl Whittaker modules for classical Lie superalgebras,} Commun. Math. Phys. 388 (1) (2021) 351-383.
    \bibitem[CLLW24]{CLLW24} H. Chen, G. Lin, Z. Li, L. Wang \emph{Classical Whittaker modules for the affine Kac-Moody algebras $A_N^{(1)}$}, Adv. Math. {\bf 454} (2024), P no. 109874.
    \bibitem[CFKM97]{CFKM97} B. Cox, V. Futorny, S. J. Kang, D. Melville, \emph{Quantum deformations of imaginary Verma modules} Proceedings of the London Mathematical Society Volume 74.
     \bibitem[CJ20]{CJ20} X. Chen, C. Jiang, {\sl Whittaker modules for the twisted affine Nappi-Witten Lie algebra $\hat{H}_4[\tau]$}, J. Algebra 546 (2020) 37-61.
    
    \bibitem[DFD09]{DFD09} I Dimitrov, V. Futorny, D Grantcharov, \emph{Parabolic sets of roots}, Contemp. Math., 499 AMS, Prov, RI, 2009, 61-73. 
    \bibitem[D85]{D85} V. G. Drifeld, \emph{Hopf algebras and quantum Yang-Baxter equation}, Dokl. Akad. Nauk SSSR, 283(5) (1985) 1060-1064.
    \bibitem[FHW15]{FHW15} V. Futorny, J. T. Hartwig, E.A. Wilson, \emph{Quantum affine modules for non-twisted affine Kac-Moody algebras}, Proc. Amer. Math. Soc. 143(2015), n0. 12, 5159-5171.

    \bibitem[F90]{F90} V. M. Futorny, \emph{Parabolic partitions of root systems and corresponding representations
of the affine Lie algebras}, Akad. Nauk Ukrain. SSR Inst. Mat. Preprint, (8):30–39, 1990.

\bibitem[F92]{F92} V. M. Futorny. \emph{The parabolic subsets of root system and corresponding representations of affine Lie algebras}, In Proceedings of the International Conference on Algebra, Part
2 (Novosibirsk, 1989), volume 131 of Contemp. Math., pages 45–52, Providence, RI,
1992. Amer. Math. Soc.

\bibitem[FGXZ25]{FGXZ25} V. Futorny, X. Guo, Y. Xue K. Zhao, \emph{Smooth representations of affine Kac-Moody algebras} Adv. Math. 481 (2025), Paper No. 110559, 34 pp.
    
    \bibitem[FT25]{FS25} V. Futorny, S. Tantubay, \emph{Whittaker modules for W type Cartan Lie superalgebras} 	arXiv:2511.17995.
\bibitem[JK85]{JK85}H. P. Jacobsen, V. G. Kac, \emph{A new class of unitarizable highest weight representations of infinite-dimensional Lie algebras} In Nonlinear equations in classical and quantum field theory (Meudon/Paris, 1983/1984), pages 1–20. Springer, Berlin, 1985.
   \bibitem[JK90]{JK90} H. P. Jacobsen, V. G. Kac, \emph{A new class of unitarizable highest weight representations of infinite-dimensional Lie algebras}. II. J. Funct. Anal., 82(1):69–90, 1989}.

   
   
    \bibitem[HK]{HK} J. Hong, S. J. Kang, \emph{Introduction to Quantum Groups and Crystal Bases}, Graduate Studies in Mathematics, Volume 42. 
    \bibitem[Kac]{Kac} Victor G. Kac, \emph{Infinite-Dimensional Lie Algebras,} third edition, Cambridge University Press, Cambridge, 1990.
     
    \bibitem[LL24]{LL24} G Lin, Z. Li, \emph{Classical Whittaker modules for the Classical affine Kac-Moody algebras}, J. Algebra, {\bf 644} (2024), 23-63.
    \bibitem[L88]{L88} G. Lusztig, \emph{Quantum deformations of certain simple modules over enveloping algebras}, Adv. Math. {\bf 70} (1988), no-2, 237-249.
     \bibitem[L93]{L93} G. Lusztig, \emph{Introduction to Quantum Groups}. Boston: Birkh\"{a}user/Springer, New York, 2010, reprint of 1994 edition.
     \bibitem[LPX19]{LPX19} D. Liu, Y. Pei, L. Xia, {Whittaker modules for the super-Virasoro algebras} J. Algebra Appl. 18 (2019) 1950211.
      \bibitem[LWZ10]{LWZ10} D. Liu, Y. Wu, L. Zhu, {\sl Whittaker modules for the twisted Heisenberg-Virasoro algebra} J. Math. Phys. 51 (2010) 023524.
     \bibitem[O05]{MO05} M. Ondrus, \emph{Whittaker Modules over $U_q(\mathfrak{sl}(2))$,} J. Algebra, {\bf 289}, 192-213 (2005).
     \bibitem[OW09]{OW09} M. Ondrus, E. Wiesner, {\sl Whittaker Modules for the Virasoro Algebra} J. Algebra Appl. 8 (2009) 363-377. 
     \bibitem[B81]{REB81} R. E. Block, \emph{The irreducible representations of $\mathfrak{sl}(2)$ and of the Weyl Algebra}, Adv. Math. 39 69-110 (1981).
     \bibitem[S00]{S00}  A. Sevostyanov, {\sl Quantum deformation of Whittaker modules and the Toda Lattice} Duke Math. J. 105 (2000) 211-238.
      \bibitem[XGZ21]{XGZ21}  L. Xia, X. Guo, J. Zhang, {\sl Classification on irreducible Whittaker modules over quantum group $U_q(\mathfrak{sl}_3, \Lambda)$} Front. Math. China 16(4) (2021) 1089-1097.
      \bibitem[ZL22]{ZL22} Y. Zhao, Genqiang Liu, {\sl Whittaker category for the Lie algebra of polynomial vector fields} J. Algebra {\bf 605} (2022) 74-88.      
    \end{thebibliography}
	\end{document}